\documentclass[a4paper,reqno,11pt]{amsart}
\usepackage{amsmath,amstext,amssymb,amsopn,amsthm,mathrsfs}
\usepackage{xcolor}
\usepackage[matrix,arrow,ps]{xy}
 
\textwidth=15.7cm \textheight=23cm
\voffset=-1.2 cm
\hoffset=-1.94 cm

\numberwithin{equation}{section}
\allowdisplaybreaks

\newtheorem{theorem}{Theorem}[section]
\newtheorem{proposition}[theorem]{Proposition}
\newtheorem{lemma}[theorem]{Lemma}
\newtheorem{corollary}[theorem]{Corollary}
\newtheorem{definition}[theorem]{Definition}

\newtheorem{example}[theorem]{Example}

\def\a{\alpha}
\def\b{\beta}

\def\vp{\varphi}
\def\ve{\varepsilon}

\newcommand{\N}{\mathbb{N}}

\newcommand{\R}{\mathbb{R}}
\newcommand{\C}{\mathbb{C}}
\newcommand{\Rd}{\mathbb{R}^d}
\newcommand{\bbM}{\mathbb{M}}

\newcommand*\calF{\mathcal{F}}
\newcommand*\calG{\mathcal{G}}
\newcommand*\calH{\mathcal{H}}

\newcommand*\calL{\mathcal{L}}

\newcommand*\calN{\mathcal{N}}

\newcommand*\calS{\mathcal{S}}

\newcommand{\bbG}{\mathbb{G}^\circ_{\a,\b}}

\newcommand{\D}{{\rm Dom}}
\newcommand{\esa}{essentially self-adjoint }
\newcommand{\calka}{\int_0^\infty}

\begin{document}
\title[A Grushin type operators]{Spectral analysis of Grushin type operators on the quarter plane}

\author[K. Stempak]{Krzysztof Stempak}
\address{Krzysztof Stempak \endgraf\vskip -0.1cm
         55-093 Kie\l{}cz\'ow, Poland \endgraf \vskip -0.1cm
				        }
\email{Krzysztof.Stempak@pwr.edu.pl}

\begin{abstract} 
We investigate  spectral properties of self-adjoint extensions of the operator 
$$
G_{\alpha,\beta}=-\Big(\frac{\partial^2}{\partial r^2}+\frac{2\a+1}{r}\frac{\partial}{\partial r} \Big)
-r^2 \Big(\frac{\partial^2}{\partial s^2}+\frac{2\b+1}{s}\frac{\partial}{\partial s} \Big),
$$
$\a,\b\in\R$, with domain $\D\, G_{\alpha,\beta}=C^\infty(\R^2_+)\subset L^2(\R^2_+,r^{2\a+1}s^{2\b+1}drds)$,  
which for some specific values of $\a,\b$, is a bi-radial part of the Grushin operator. Alternatively, we investigate $G^\circ_{\alpha,\beta}$, 
the Liouville form of $G_{\alpha,\beta}$, which is a symmetric and nonnegative operator on $L^2(\R^2_+, drds)$. 
One of the main tools used is an integral transform which combines the Laguerre scaled transform and the Hankel transform. 
Self-adjoint extensions $\mathbb{G}^\circ_{\alpha,\beta}$ of $G^\circ_{\alpha,\beta}$ are defined in terms of this transform, and the spectral decompositions 
of them are given. Another approach to construct self-adjoint extensions of $G^\circ_{\alpha,\beta}$, based on the technique of sesquilinear forms, 
is also presented and then the two approaches are compared. We also establish a closed form of the heat kernel corresponding to $\mathbb{G}^\circ_{\alpha,\beta}$.
\end{abstract}

\maketitle

\section{Introduction} \label{sec:intro} 

It is well known that the radial part of the (minus) Laplacian in $\Rd$ is the differential operator
$$
B=-\frac{d^2}{dr^2}-\frac{d-1}r\frac d{dr}
$$
in the sense that
$$
(-\Delta)F(x)=Bf(|x|), \qquad x\in\Rd,
$$
where $f=f(r)$, $r>0$, is the radial profile of a radial function $F$ on $\R^d$, i.e. $F(x)=f(|x|)$. Moreover, for radial functions, the Fourier 
transform reduces to the (modified) Hankel transform; this means that
$$ 
\calF_d F(x)=\calH_{(d-2)/2}f(|x|), \qquad x\in \Rd.
$$ 
Here,  for the  type parameter $\a>-1$, $\calH_\a$ is the integral transform given by
$$
\calH_\a f(r)=\int_0^\infty f(u)\frac{J_\a(ru)}{(ru)^\a}\,u^{2\a+1}\,du, \qquad r\in(0,\infty),
$$
where $J_\a$ stands for the Bessel function of the first kind of order $\a$, see e.g. \cite[Section 5]{Leb}.

The \textit{Grushin differential operator} on $\R^d=\R^{d_1}\times\R^{d_2}$, $d_1, d_2\ge1$, is given by the differential expression
$$
G=-\Delta_{x'}-|x'|^2\Delta_{x''},
$$
where $x=(x',x'')$, $x'\in\R^{d_1}$, $x''\in\R^{d_2}$,  and $\Delta_{x'},\Delta_{x''}$ are the Laplacians on $\R^{d_1}$ and $\R^{d_2}$, 
respectively. The symbol $|\cdot|$ stands for the Euclidean norm, e.g. in $\R^{d}$, $\R^{d_1}$, or $\R^{d_2}$, depending on the context. 
It is easily seen that initially considered with domain $C^\infty_c(\R^d)\subset L^2(\Rd)$, $G$ is symmetric and nonnegative. Moreover, 
$G$ is essentially self-adjoint, hence it admits the unique self-adjoint extension on $L^2(\Rd)$, called the \textit{Grushin operator}. 
During the last years numerous papers 
were devoted to different aspects of analysis of $G$. See, for instance, Dall'Ara and Martini \cite{DaM} and the extensive literature therein, 
and Jotsaroop et al. \cite{JST}, where, among others,  the spectral decomposition of the Grushin operator was proposed. Also some generalizations 
of $G$ were considered, see, e.g. Almeida et al. \cite{ABCS}.

We now modify the concept of radiality and call a function $F$ on $\R^d=\R^{d_1}\times\R^{d_2}$ \textit{bi-radial} provided $F(x)=f(|x'|,|x''|)$ 
for some $f\colon (0,\infty)\times(0,\infty)\to\C$. The structure of $G$ immediately reveals that for sufficiently smooth bi-radial functions we have
$$
G F(x',x'')=G_{\frac{d_1-2}2, \frac{d_2-2}2}f(r,s),\qquad r=|x'|,\quad s=|x''|,
$$
where for $\a,\b\in \R$ we set
$$
G_{\a,\b}=-\Big(\frac{\partial^2}{\partial r^2}+\frac{2\a+1}{r}\frac{\partial}{\partial r} \Big)
-r^2 \Big(\frac{\partial^2}{\partial s^2}+\frac{2\b+1}{s}\frac{\partial}{\partial s} \Big).
$$
Thus, for the specific values $\a=(d_1-2)/2$,  $\b=(d_2-2)/2$, this  differential operator can be seen as  the bi-radial part of the Grushin differential operator on 
$\R^d=\R^{d_1}\times\R^{d_2}$. 

The aim of this paper is to perform the spectral analysis of $G_{\a,\b}$, $\a,\b\in\R$, with domain $\D\,G_{\a,\b}=C^\infty_c\big(\R^2_+\big)\subset L^2(r^{2\a+1}s^{2\b+1}dr\,ds)$. In fact, we shall act simultaneously considering the unitarily equivalent version of $G_{\a,\b}$, see Section~\ref{ssec:Grushin2}, namely
\begin{equation*}
G_{\a,\b}^\circ = -\Big(\frac{\partial^2}{\partial r^2}-\frac{\a^2-1/4}{r^2}\Big)-r^2\Big(\frac{\partial^2}{\partial s^2}-\frac{\b^2-1/4}{s^2} \Big),
\end{equation*}
with domain $\D\,G_{\a,\b}^\circ=C^\infty_c\big(\R^2_+\big)$; see Section \ref{ssec:fra} for additional comments. This version, being a symmetric and 
nonnegative operator on $L^2\big(\R^2_+\big)$, is technically easier to work with because of the presence of Lebesgue measure. 
One of the main tools we use is an integral transform which combines the Laguerre scaled transform and the Hankel transform. The idea of using the 
Laguerre scaled transform is implicitly contained in the spectral decomposition of the Grushin operator presented in \cite{JST}. This decomposition 
is based on applying scaled Hermite functions, forming orthonormal bases in $L^2(\R^{d_1})$, together with the Fourier transform in $L^2(\R^{d_2})$. 
But, as it is well known, the radialization of the Hermite function setting leads to the Laguerre function framework; in addition, the radialization 
of the Fourier transform results in using the Hankel transform. Therefore, combining these two facts found an outcome in the idea of construction 
of the transform which is one of our main tools in the investigation of the Grushin-type operators. It is necessary to mention that the use of the 
systems of scaled Hermite functions was initiated and intensively applied by Thangavelu in the harmonic analysis of the sub-Laplacian on the Heisenberg 
group, \cite{T1}, \cite{T2}, and in an investigation of an analogue of Hardy's theorem for the Heisenberg group,  \cite{T3}, \cite{T4} (and also in 
some other contexts). See also \cite{T5}.

The paper is organized as follows. Section \ref{sec:prel} contains preliminaries and includes a discussion of the Bessel operator and the Hankel transform, 
the Laguerre operator and the Laguerre scaled transform, and the Grushin-type operators. In Section \ref{sec:Gru} we define the transform $\calG^\circ_{\a,\b}$, 
which combines the Laguerre scaled transform and the Hankel transform, and in Theorem~\ref{thm:first} we state its basic properties. Section~\ref{sec:self} 
is devoted to a construction of self-adjoint extensions of the Grushin-type operator $G^\circ_{\a,\b}$ given in terms of $\calG^\circ_{\a,\b}$. Then we 
discuss the corresponding heat semigroup and deliver a compact form of the associated heat kernels, see Theorem~\ref{thm:heat}. In Section~\ref{sec:self2}  another approach to construction of self-adjoint extensions is presented and then two approaches are compared. Here the main tools used are weak $\delta$-derivatives and $\delta$-Sobolev spaces, introduced and investigated in the $G^\circ_{\a,\b}$ context. Finally, in Section~\ref{sec:app} we gather comments, remainders and proofs of some technical results used earlier; we believe that putting all this stuff in the appendix will allow the reader to concentrate 
on the main line of thoughts. 

The final comment concerns the range of type parameters $\a,\b$, used in the statements of results that follow. For the $G^\circ_{\a,\b}$-setting it is
obvious that we can limit the full range  $\a,\b\in\R$ to $\a,\b\ge0$. The same holds for the $G_{\a,\b}$-setting, see Proposition~\ref{pro:first} and the
remarks following it. Despite this possible limitation we keep the full range whenever we can. But in several statements, for technical reasons we must 
limit the range to $\a,\b>-1$. This comes from the fact that the $\calG^\circ/\calG$-transforms are defined only for $\a,\b>-1$ (which is caused by earlier
range limitations for the Hankel and Laguerre transforms). It must be, however, emphasized that considering the range $\a,\b>-1$, larger than theoretically 
necessary  $\a,\b\ge0$, brings important advantages; one of them is exemplified by Theorem~\ref{thm:second}.

\textbf{Notation and terminology}. Throughout the paper we use fairly standard notions and symbols. An operator $T$ on a Hilbert space $(H,\langle\cdot,\cdot\rangle_H)$, is said to be \textit{nonnegative} provided $\langle Tf,f\rangle_H\ge0$ for all $f\in\D\,T\subset H$. 
The operators considered below 
with domains $C^\infty_c(0,\infty)$ or $C^\infty_c\big((0,\infty)\times (0,\infty)\big)$ in appropriate Hilbert spaces are densely defined; we shall not 
repeat this fact later on. We use the calligraphic letters $\calF$, $\calG$, $\calH$, $\calL$, with subscripts and/or superscripts, to denote 
transforms. For instance $\calF_d$ stands for the Fourier transform on $\Rd$ given by $\calF_d F(x)=(2\pi)^{-d/2}\int_{\Rd}F(y)\exp(-ix\cdot y)\,dy$, 
$F\in L^1(\Rd)$. Similarly, we use the \textsl{mathbb} font letters $\mathbb B$, $\mathbb D$, $\mathbb G$, $\mathbb L$, $\mathbb M$, with subscripts 
and/or superscripts, to denote self-adjoint operators. For $\a,\b\in\R$ we set 
$$
d\mu_\a(r)=r^{2\a+1}\,dr, \qquad  d\mu_{\a,\b}(r,s)=r^{2\a+1}s^{2\b+1}\,dr\,ds.
$$
The canonical inner products in the Hilbert spaces $L^2(d\mu_\a):=L^2\big((0,\infty),d\mu_\a\big)$ and $L^2(d\mu_{\a,\b}):=L^2(\R^2_+,d\mu_{\a,\b}\big)$, 
where $\R^2_+:=(0,\infty)\times(0,\infty)$, will be denoted by $\langle\cdot,\cdot\rangle_\a$, and $\langle\cdot,\cdot\rangle_{\a,\b}$, respectively. 
Relating two positive quantities $X$ and $Y$ we write $X\lesssim Y$, when $X\le cY$ for some $c>0$ independent of possible significant quantities entering $X$ and $Y$; the symbol $\gtrsim$ is understood analogously. If $X\lesssim Y$ and $Y\lesssim X$, then we write $X\simeq Y$. 

We mention that we shall slightly abuse the notation using the same symbols to denote differential operators (without precisely specified domains) and corresponding to them unbounded operators on appropriate $L^2$ spaces (with determined domains). We believe this will not lead to a confusion. 

\section{Preliminaries} \label{sec:prel}  

For the sake of clarity we decided to split this section into four subsections.
\subsection{Bessel operator} \label{ssec:Bessel}
The \textit{Bessel operator} given by the differential expression 
$$
B_\a= -\Big(\frac{d^2}{d r^2}+\frac{2\a+1}{r}\frac{d}{d r} \Big)=
\frac1{r^{2\a+1}}\Big(- \frac d{d r}\big(r^{2\a+1}\frac d{d r}\big)\Big),
$$
and considered with domain $\D\,B_\a=C^\infty_c(0,\infty)\subset L^2(d\mu_\a)$, is symmetric and nonnegative; the latter divergence form of $B_\a$ can be 
used to verify this. It is also known that $B_\a$ is essentially self-adjoint if and only if $|\a|\ge1$. This follows from \cite[Section 11]{E}. 
For $\a>-1$ the canonical self-adjoint extension of $B_\a$ is given in terms of the Hankel transform $\calH_\a$.~
\footnote{$\spadesuit$ In the literature $\calH_\a$ is sometimes called  the \textit{modified} Hankel transform, in opposition to $\calH_\a^\circ$, see Section 
\ref{ssec:Grushin2}, which is called the Hankel transform.} 
The kernel of this transform is built up from eigenfunctions of $B_\a$. More precisely, denoting
$$
\tilde{J}_{\a,\tau}(u)=J_\a(\tau u)/(\tau u)^\a, \qquad u>0,
$$
so that $B_\a \tilde{J}_{\a,\tau}=\tau^2 \tilde{J}_{\a,\tau}$ (this follows from the basic differential equation satisfied by the Bessel function $J_\a$), we have
$$
\calH_\a f(\tau)=\int_0^\infty f(u) \tilde{J}_{\a,\tau}(u)\,d\mu_\a(u), \qquad \tau>0.
$$
Specifying the type parameter to $\a=-1/2$ brings the cosine transform
$$
\calH_{-1/2} f(\tau)=\Big(\frac2{\pi}\Big)^{1/2}\int_0^\infty f(u)\cos \tau u\,du,
$$
and similarly, for $\a=1/2$ we obtain the sine transform.

Recall that $\calH_\a$, defined initially on $L^1(d\mu_\a)$, extends (uniquely) to a unitary isomorphism onto $L^2(d\mu_\a)$ and we  use the same 
symbol $\calH_\a$ to denote this extension. See  \cite[Section 4]{BS}, where Plancherel's identity and the inverse formula is proved for $\calH_\a$ 
in the full range $\a>-1$. We also add that $\calH_\a^{-1}=\calH_\a$ on $L^2(d\mu_\a)$. In the same way as the Fourier transform intertwines 
the Laplacian with the multiplication by $|\cdot|^2$, the Hankel transform $\calH_\a$ unitarily intertwines the Bessel operator $B_\a$ with the 
multiplication operator by $(\cdot)^2$, i.e.
\begin{equation}\label{2.1}
\calH_\a(B_\a f)= (\cdot)^2 \calH_\a f, \qquad f\in\D\, B_\a=C^\infty_c(0,\infty).
\end{equation}
Since the multiplication operator $\mathbb{M}_{(\cdot)^2}$ (with maximal domain) is self-adjoint and nonnegative on $L^2(d\mu_\a)$, see Lemma~\ref{lem:spe},  
the operator $\mathbb{B}_\a$ defined by 
\begin{align*}
\D\,\mathbb{B}_\a&:=\{f\in L^2(d\mu_\a)\colon (\cdot)^2 \calH_\a f\in L^2(d\mu_\a)\},\\
\mathbb{B}_\a&:=\calH_\a\circ \mathbb{M}_{(\cdot)^2}\circ \calH_\a
\end{align*}
is also self-adjoint and nonnegative. It may easily be checked that $\mathbb{B}_\a$ indeed extends $B_\a$. 

Finally, we mention that the Bessel differential operator $B_\a$ is homogeneous of degree 2, which means that $B_\a(\rho_\tau f)=\tau^2\rho_\tau(B_\a f)$,
where $\rho_\tau$, $\tau>0$, is the dilation $\rho_\tau f(r)= \tau^{\a+1}f(\tau r)$, $r>0$, being a unitary automorphism of $L^2(d\mu_\a)$. 
\subsection{Laguerre operator} \label{ssec:Laguerre}
We shall also make use of the \textit{Laguerre operator} with domain $\D\,L_\a=C^\infty_c(0,\infty)\subset L^2(d\mu_\a)$ defined by the differential expression 
$$
L_\a=B_\a+r^2= \frac1{r^{2\a+1}}\Big(- \frac d{d r}\big(r^{2\a+1}\frac d{d r}\big)+ r^{2\a+3}\Big).
$$
The latter form, the divergence form of $L_\a$, immediately shows that $L_\a$ is symmetric and nonnegative. 
It is also known that $L_\a$ is essentially self-adjoint if and only if $|\a|\ge1$; basically, this follows from \cite[Sections 27, 28]{E}, cf. also 
\cite[Section 7.2]{St2}. For $\a>-1$ the Laguerre functions (of convolution type) 
$$
\ell_n^\a(r)=c_{n,\a}L^\a_n(r^2)\exp(-r^2/2),\qquad  r>0, \quad n\in \N:=\{0,1,\ldots\}, 
$$ 
where $c_{n,\a}=\big(\frac{2\Gamma(n+1)}{\Gamma(n+\a+1)}\big)^{1/2}$, form an orthonormal basis in $L^2(d\mu_\a)$ and are eigenfunctions of $L_\a$, 
$$
L_\a \ell_n^\a=\lambda_{n}^{\a}\ell_n^\a, \qquad \lambda_{n}^{\a}=2(2n+\a+1), \quad n\in \N.
$$
Here $L^\a_n$ stands for the $n$th Laguerre polynomial of order $\a$, see e.g. Lebedev \cite[4.17]{Leb}. 

The mapping $f\to \{\langle f,\ell_n^\a\rangle_\a\}_{n=0}^\infty$, denote it $\calL_{\a,1}$, unitarily identifies $L^2(d\mu_\a)$ with $\ell^2(\N)$, 
and the inverse mapping $\calL_{\a,1}^{-1}$ is  $\{c_n\}_0^\infty\to \sum_0^\infty c_n \ell_n^\a$. Since the multiplication operator 
$\mathbb{M}_{\{\lambda_n^\a\}}$  is self-adjoint and nonnegative on $\ell^2(\N)$, see Lemma~\ref{lem:spe}, the operator 
$\mathbb{L}_\a$ given by 
\begin{align*}
\D\,\mathbb{L}_\a&:=\calL_{\a,1}^{-1}(\D\,\mathbb{M}_{\{\lambda_n^\a\}})=\{f\in L^2(d\mu_\a)\colon \sum_{n=0}^\infty|\lambda_{n}^{\a}\langle f,\ell^\a_n\rangle_\a|^2<\infty\},\\
\mathbb{L}_\a&:=\calL_{\a,1}^{-1}\circ \mathbb{M}_{\{\lambda_n^\a\}}\circ \calL_{\a,1}
\end{align*}
is self-adjoint and nonnegative on $L^2(d\mu_\a)$. 
A simple argument shows that indeed $\mathbb{L}_\a$ is an extension of $L_\a$.

In fact we shall need a \textit{scaled} Laguerre framework and by this we mean the following. 
For any $\tau>0$ the \textit{scaled system} 
$$
\ell_{n,\tau}^\a:=\rho_{\sqrt\tau}\ell_n^\a, \qquad n\in\N,
$$ 
is an orthonormal basis in $L^2(d\mu_\a)$, $\a>-1$. Moreover, the scaled system consists of eigenfunctions of the \textit{scaled Laguerre operator} 
$
L_{\a,\tau}=B_\a+\tau^2r^2,
$
with eigenvalues $\lambda_{n}^\a\tau$, i.e.,   
\begin{equation}\label{2.2}
L_{\a,\tau}\ell_{n,\tau}^\a=\lambda_{n}^\a\tau\,\ell_{n,\tau}^\a. 
\end{equation}
Indeed, 
to obtain \eqref{2.2} we write 
\begin{align*}
L_{\a,\tau}\ell_{n,\tau}^\a=(B_{\a}+\tau^2r^2 )( \rho_{\sqrt\tau}\ell_n^\a)=\tau  \rho_{\sqrt\tau}(B_\a\ell_n^\a)+\tau^2r^2 \rho_{\sqrt\tau}\ell_n^\a 
&=\tau  \rho_{\sqrt\tau}\big((B_\a+(\sqrt\tau r)^2  )\ell_n^\a\big)\\
&=\tau  \rho_{\sqrt\tau}\big(L_\a \ell_n^\a\big)\\
&=\tau\lambda_{n}^\a \rho_{\sqrt\tau}\ell_n^\a.
\end{align*}

In this scaled framework we consider the \textit{Laguerre scaled transform} which, for every $\tau>0$, attaches to a function $f\in L^2(d\mu_\a)$ 
the sequence of coefficients from the expansion of $f$ with respect to $\{\ell_{n,\tau}^\a\}_0^\infty$. Namely, for $n\in\N$,  $\tau>0$,
$$
\calL_{\a,\tau} f(n):=\calL_{\a} f(n,\tau)=\langle f,\ell_{n,\tau}^\a\rangle_\a, \qquad f\in L^2(d\mu_\a).
$$
By Parseval's identity, for every $\tau>0$, $\calL_{\a,\tau}\colon L^2(d\mu_\a)\to \ell^2(\N)$ is a unitary isomorphism, and its inverse 
$\calL_{\a,\tau}^{-1}\colon \ell^2(\N) \to L^2(d\mu_\a)$ is the mapping $\{c_n\}_0^\infty\to \sum_0^\infty c_n \ell_{n,\tau}^\a$. 
Notice also that as a counterpart to \eqref{2.1}, in the present framework we have for $n\in\N$,  $\tau>0$,
$$
\calL_{\a}(L_{\a,\tau}f)(n,\tau)=\lambda_{n}^\a\tau\, \calL_{\a}f(n,\tau), \qquad f\in \D\,L_{\a,\tau}=C^\infty_c(0,\infty).
$$
\begin{example}\label{ex1} We apply the formula (see, for instance, \cite[18.17.34]{NIST})
\begin{equation}\label{2.3}
\calka L^{\a}_n(u)e^{-bu} u^{\a}\,du=\frac{\Gamma(n+\a+1)}{\Gamma(n+1)}\frac{(b-1)^n}{b^{n+\a+1}}, \qquad \a>-1,\quad n\in\N, \quad b>0,
\end{equation} 
(for $b=1$ and $n=0$ the convention $0^0=1$ is used) to evaluate the Laguerre scaled  transform of the gaussian function $g(r):=\exp(-r^2/2)$ and to check, 
by a direct calculation, Parseval's identity for $g$, for every fixed $\tau>0$. We have 
$$
\calL_{\a} g(n,\tau)=\langle g,\ell_{n,\tau}^\a\rangle_\a=c_{n,\a}\tau^{(\a+1)/2}\calka e^{-r^2/2}L^\a_n(\tau r^2) e^{-\tau r^2/2}r^{2\a+1}dr. 
$$
A change of variable with the aid
 of \eqref{2.3} then leads to
$$
\calL_{\a} g(n,\tau)=\frac{2^{\a+1}}{c_{n,\a}}\Big(\frac{\sqrt{\tau}}{1+\tau}\Big)^{\a+1}\Big(\frac{1-\tau}{1+\tau}\Big)^n.
$$
Next, recalling that $c_{n,\a}=\big(2\Gamma(n+1)/\Gamma(n+\a+1)\big)^{1/2}$ and   applying
$$
\sum_{n=0}^\infty \frac{\Gamma(n+\a+1)}{\Gamma(\a+1)\Gamma(n+1)}q^n={\,}_2F_1(1,\a+1;1;q), \qquad \a>-1,\quad |q|<1,
$$
gives
$$
\sum_{n=0}^\infty |\calL_{\a} g(n,\tau)|^2=2^{2\a+1}\Gamma(\a+1)\Big(\frac{\sqrt{\tau}}{1+\tau}\Big)^{2(\a+1)}{\,}_2F_1\Big(1,\a+1;1;\Big(\frac{1-\tau}{1+\tau}\Big)^2\Big);
$$
here ${\,}_2F_1$ is the hypergeometric function. But
$$
{\,}_2F_1(1,\a+1;1;q)=\frac1{(1-q)^{\a+1}},
$$
and finally, after a calculation, 
$$
\sum_{n=0}^\infty |\calL_{\a} g(n,\tau)|^2=\frac12\Gamma(\a+1)=\int_0^\infty|g(r)|^2r^{2\a+1}dr.
$$
\end{example}

\subsection{Grushin-type operator, I} \label{ssec:Grushin}
With notation introduced in Section \ref{ssec:Bessel} we see that 
$$
G_{\a,\b}=B_{1,\a}+r^2 B_{2,\b},
$$
where $j=1,2$, now indicates the variable, $r$ (the 1\textit{st}) or $s$ (the 2\textit{nd}), over which the differentiation in the Bessel operators $B_\a$ and $B_\b$ 
is performed (more precisely, these operators are now assumed to be applied to functions of two variables). Recall that 
$\D\, G_{\a,\b}=C^\infty_c\big(\R^2_+\big)$. An easy calculation then shows that $G_{\a,\b}$ is symmetric and nonnegative 
in $L^2(d\mu_{\a,\b})$, hence it admits self-adjoint extensions. Notice also that the differential operator $G_{\a,\b}$ is \textit{parabolically homogeneous} of degree 2, 
in the sense that $G_{\a,\b}(\check\rho_\tau f)=\tau^2 \check\rho_\tau(G_{\a,\b}f)$, $\tau>0$, where the family of `parabolic' dilations 
$\{\check\rho_\tau\}_{\tau>0}$ now acts on functions on $\R^2_+$ by $\check\rho_\tau f(r,s)=f(\tau r,\tau^2 s)$. 

The appropriate scaling of the Laguerre functions is crucial in verification that the functions
$$  
\Psi_{n,\tau}(r,s):=\ell^\a_{n,\tau}(r)\tilde{J}_{\b,\tau}(s), \qquad n\in\N,\quad \tau>0,
$$
are eigenfunctions of the differential operator $G_{\a,\b}$. 
\begin{lemma} \label{lem:second}
Let $\a,\b>-1$. Then for $n\in\N$ and $\tau>0$, 
$$
G_{\a,\b}\Psi_{n,\tau}(r,s) = \lambda_{n}^\a\tau\, \Psi_{n,\tau}(r,s), \qquad r>0,\quad s>0.
$$
\end{lemma}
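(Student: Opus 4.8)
The plan is to verify the eigenvalue equation by a direct computation that exploits the separated (tensor-product) structure of $\Psi_{n,\tau}$ together with two eigenfunction facts already recorded in Section~\ref{sec:prel}: the relation $B_\b\tilde J_{\b,\tau}=\tau^2\tilde J_{\b,\tau}$ for the Bessel operator, and the scaled Laguerre eigenequation \eqref{2.2}, namely $L_{\a,\tau}\ell^\a_{n,\tau}=\lambda_n^\a\tau\,\ell^\a_{n,\tau}$ with $L_{\a,\tau}=B_\a+\tau^2r^2$. Since $G_{\a,\b}=B_{1,\a}+r^2B_{2,\b}$, the whole argument reduces to applying each summand to the product $\ell^\a_{n,\tau}(r)\tilde J_{\b,\tau}(s)$ and recognizing that the two pieces recombine into the scaled Laguerre operator acting in the $r$-variable.

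First I would apply $B_{1,\a}$, the Bessel operator in the first variable. Because it differentiates only in $r$ and treats $\tilde J_{\b,\tau}(s)$ as a constant, this produces $\tilde J_{\b,\tau}(s)\,(B_\a\ell^\a_{n,\tau})(r)$. Next I would apply $r^2B_{2,\b}$; here $B_{2,\b}$ differentiates only in $s$ and leaves $\ell^\a_{n,\tau}(r)$ untouched, so by the Bessel eigenfunction property this term equals $r^2\ell^\a_{n,\tau}(r)\,(B_\b\tilde J_{\b,\tau})(s)=\tau^2r^2\,\ell^\a_{n,\tau}(r)\tilde J_{\b,\tau}(s)$. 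Adding the two contributions and factoring out $\tilde J_{\b,\tau}(s)$ gives
$$
G_{\a,\b}\Psi_{n,\tau}(r,s)=\tilde J_{\b,\tau}(s)\,\big(B_\a+\tau^2r^2\big)\ell^\a_{n,\tau}(r).
$$

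The crux is now to observe that the operator in parentheses is precisely $L_{\a,\tau}$, the scaled Laguerre operator, so that \eqref{2.2} applies directly and yields $\big(B_\a+\tau^2r^2\big)\ell^\a_{n,\tau}=\lambda_n^\a\tau\,\ell^\a_{n,\tau}$; substituting this back produces $\lambda_n^\a\tau\,\ell^\a_{n,\tau}(r)\tilde J_{\b,\tau}(s)=\lambda_n^\a\tau\,\Psi_{n,\tau}(r,s)$, as claimed. There is no real analytic obstacle here: all functions are smooth on $\R^2_+$ and the manipulations are pointwise. The only point requiring care, and the reason (as the text emphasizes) that the particular scaling $\ell^\a_{n,\tau}=\rho_{\sqrt\tau}\ell^\a_n$ is chosen, is that the factor $\tau^2$ coming out of the Bessel eigenvalue matches exactly the coefficient of $r^2$ in the potential term of $L_{\a,\tau}$. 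Had the Laguerre system been scaled differently, the $r^2$-multiplier generated by $r^2B_{2,\b}$ would not combine with $B_\a$ into a single scaled Laguerre operator, and the computation would fail to close. Thus the entire content of the lemma is this clean matching of scales.
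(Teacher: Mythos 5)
Your proof is correct and follows exactly the same route as the paper: apply $B_{1,\a}$ and $r^2B_{2,\b}$ separately to the product $\ell^\a_{n,\tau}(r)\tilde J_{\b,\tau}(s)$, use $B_\b\tilde J_{\b,\tau}=\tau^2\tilde J_{\b,\tau}$, recombine the terms into $L_{\a,\tau}=B_\a+\tau^2r^2$ acting in $r$, and invoke \eqref{2.2}. Your closing remark about the scaling $\ell^\a_{n,\tau}=\rho_{\sqrt\tau}\ell^\a_n$ being precisely what makes the $\tau^2r^2$ terms match is a nice articulation of the point the paper itself emphasizes just before the lemma.
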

\begin{proof} 
Using $B_{\b} \tilde{J}_{\b,\tau}=\tau^2 \tilde{J}_{\b,\tau}$ and \eqref{2.2} gives
\begin{align*}
G_{\a,\b}\Psi_{n,\tau}(r,s) 
&=\big(B_{1,\a}+r^2 B_{2,\b} \big)\big(\ell^\a_{n,\tau}(r)\tilde{J}_{\b,\tau}(s)\big)\\
&=\big(B_{\a}\ell^\a_{n,\tau}\big)(r)\tilde{J}_{\b,\tau}(s)+r^2\ell^\a_{n,\tau}(r)(B_{\b}\tilde{J}_{\b,\tau})(s)\\
&=\big(B_{\a}\ell^\a_{n,\tau}\big)(r)\tilde{J}_{\b,\tau}(s)+\tau^2r^2\ell^\a_{n,\tau}(r)\tilde{J}_{\b,\tau}(s)\\
&=\big(L_{\a,\tau}\ell^\a_{n,\tau}\big)(r)\tilde{J}_{\b,\tau}(s)\\
&=\lambda_{n}^\a\tau\, \Psi_{n,\tau}(r,s).
\end{align*}
\end{proof}

For essential self-adjointness we have the following.
\begin{proposition} \label{pro:zero}
Let $\a,\b\in\R$.  If $|\a|\ge1$, then $G_{\a,\b}$ is essentially self-adjoint.
\end{proposition}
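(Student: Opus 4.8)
The plan is to prove essential self-adjointness by decoupling the two variables: I would diagonalize the $s$-direction and reduce $G_{\a,\b}$ to a one-parameter family of one-dimensional operators in $r$, each of which is a scaled Laguerre operator. Since $G_{\a,\b}$ is symmetric and nonnegative, it is essentially self-adjoint exactly when $\ker(G_{\a,\b}^*+1)=\{0\}$ (one point below the lower bound suffices), so I would fix $u\in L^2(d\mu_{\a,\b})$ with $G_{\a,\b}^*u=-u$ and aim to deduce $u=0$. Because the Liouville form $G^\circ_{\a,\b}$ depends on $\b$ only through $\b^2$ (cf.\ Proposition~\ref{pro:first}), there is no loss in assuming $\b\ge0$, in particular $\b>-1$, so that the Hankel transform $\calH_\b$ is available.

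First I would apply the partial transform $I\otimes\calH_\b$ acting in the $s$-variable. Writing $G_{\a,\b}=B_{1,\a}+r^2B_{2,\b}$ and using the intertwining relation \eqref{2.1} for $B_\b$ together with the fact that multiplication by $r^2$ acts only in the first variable, this transform carries $G_{\a,\b}$ into the decomposable operator whose fiber over $\sigma>0$ is the scaled Laguerre operator $L_{\a,\sigma}=B_\a+\sigma^2 r^2$ on $L^2(d\mu_\a)$. Thus the problem becomes the analysis of the direct integral $\int^\oplus L_{\a,\sigma}\,d\mu_\b(\sigma)$, which is precisely the mechanism behind the eigenfunctions $\Psi_{n,\tau}$ of Lemma~\ref{lem:second}.

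Next I would check that every fiber $L_{\a,\sigma}$ is essentially self-adjoint on $C^\infty_c(0,\infty)$. Indeed, the scaling identity used to establish \eqref{2.2} shows that the dilation $\rho_{\sqrt\sigma}$ conjugates $L_{\a,\sigma}$ into $\sigma L_\a$; since $\rho_{\sqrt\sigma}$ is a unitary automorphism of $L^2(d\mu_\a)$ mapping $C^\infty_c(0,\infty)$ onto itself, and since $L_\a$ is essentially self-adjoint for $|\a|\ge1$, so is each $L_{\a,\sigma}$. Consequently the fiber maximal operator coincides with the closure, $L_{\a,\sigma}^{\max}=\overline{L_{\a,\sigma}}=L_{\a,\sigma}^*$, is self-adjoint and nonnegative, and hence $\ker(L_{\a,\sigma}^*+1)=\{0\}$ for almost every $\sigma$.

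The decisive step, and the one I expect to be the main obstacle, is to transfer the deficiency equation fiberwise, i.e.\ to show that $I\otimes\calH_\b$ carries the maximal operator $G_{\a,\b}^*$ into $\int^\oplus L_{\a,\sigma}^{\max}\,d\mu_\b(\sigma)$; granting this, the transform $\hat u$ of the deficiency element satisfies $(L_{\a,\sigma}^*+1)\hat u(\cdot,\sigma)=0$ for almost every $\sigma$, whence $\hat u(\cdot,\sigma)=0$ and $u=0$. The subtlety is that \eqref{2.1} is asserted only on $C^\infty_c$, so passing to the maximal (adjoint) domain requires that $\calH_\b$ already encode the \emph{unique} self-adjoint realization of $B_\b$ in the $s$-variable; equivalently, that no boundary data survive at the endpoint $s=0$. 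This is the limit-point condition in the second variable, that is, the essential self-adjointness of $B_\b$ recorded in Section~\ref{ssec:Bessel}, and it is precisely the point at which a condition on $\b$ must enter. I would settle it through the standard direct-integral core criterion: the fibers share the common core $C^\infty_c(0,\infty)$, and $\sigma\mapsto L_{\a,\sigma}\phi=B_\a\phi+\sigma^2r^2\phi$ depends polynomially on $\sigma$ for each fixed $\phi$, so the $I\otimes\calH_\b$-image of finite sums $\sum_k\phi_k(r)\psi_k(s)$ with $\phi_k,\psi_k\in C^\infty_c(0,\infty)$ is a core for the self-adjoint direct integral; unwinding the transform then shows $C^\infty_c(\R^2_+)$ is a core for $G_{\a,\b}$, completing the argument.
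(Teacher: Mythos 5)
Your proposal follows, in essence, the same route as the paper's proof: reduce to $\b>-1$ via the $\b\mapsto-\b$ symmetry (Proposition~\ref{pro:first}), decouple the $s$-variable with the partial Hankel transform $I\otimes\calH_\b$, identify the fibers as the scaled Laguerre operators $L_{\a,\sigma}=B_\a+\sigma^2r^2$, and conclude from their essential self-adjointness when $|\a|\ge1$. Your two deviations are legitimate and even attractive: testing at the single real point $-1$ (valid for a nonnegative symmetric operator) instead of at non-real $\lambda$, and obtaining essential self-adjointness of each fiber by the dilation $\rho_{\sqrt\sigma}$, which conjugates $L_{\a,\sigma}$ into $\sigma L_\a$ --- the paper instead proves this as Lemma~\ref{lem:app1} by limit-point/limit-circle analysis of the Liouville form.

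The genuine flaw is your diagnosis of the transfer step. You claim that carrying the deficiency equation fiberwise ``requires that $\calH_\b$ already encode the unique self-adjoint realization of $B_\b$'', i.e.\ the essential self-adjointness of $B_\b$, and that this ``is precisely the point at which a condition on $\b$ must enter.'' That cannot be right: $B_\b$ is essentially self-adjoint only for $|\b|\ge1$ (Section~\ref{ssec:Bessel}), whereas the proposition imposes no condition on $\b$ at all; the interesting content is exactly the case $|\a|\ge1$, $|\b|<1$, where $G_{\a,\b}$ is essentially self-adjoint although $B_\b$ is not. If your proof genuinely needed that condition, it would establish a strictly weaker statement. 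What the transfer actually requires is much less: that $\calH_\b$ is unitary on $L^2(d\mu_\b)$ and that $\calH_\b\big(C^\infty_c(0,\infty)\big)$ is dense there --- both true for every $\b>-1$. The deficiency element is only ever paired with $(G_{\a,\b}+1)\vp$ for $\vp$ in the span of products $\vp_1\otimes\vp_2$; localization in the fiber variable $\sigma$ then comes from the density of $\{\calH_\b\vp_2\colon\vp_2\in C^\infty_c(0,\infty)\}$, not from identifying $G_{\a,\b}^*$ with a direct integral of maximal fiber operators over a canonical realization of $B_\b$, so no boundary behaviour at $s=0$ ever intervenes. This is exactly how the paper argues: pair with $\vp_1\vp_2$, apply Parseval for $\calH_\b$ in $s$, obtain the fiber equation for almost every $\eta$, and invoke essential self-adjointness of $L_{\a,\eta}$ alone. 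Note that your own fallback, the direct-integral core criterion, uses precisely these two ingredients and nowhere the uniqueness of the self-adjoint realization of $B_\b$ --- so your argument can be completed, but only after discarding the assertion that a condition on $\b$ enters; as written, the decisive step of the proposal is premised on a claim that contradicts the statement being proved.
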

\begin{proof} 
In the proof we modify appropriately an argument used in Dall'Ara and Martini \cite[Section 3.2]{DaM}. 

Let $|\a|\ge1$ and assume that $\b>-1$; this restriction is justified, see the remarks following Proposition~\ref{pro:first}. By a basic criterion, 
see e.g. \cite[Proposition~3.8]{Sch}, it suffices to check that $\calN(G_{\a,\b}^*-\overline{\lambda} I)=\{0\}$ for $\lambda \in \C\setminus\R$; 
here $G_{\a,\b}^*$ stands for the adjoint of $G_{\a,\b}$. This amounts to verifying that the only $f\in L^2(d\mu_{\a,\b})$ that satisfies
$$
\forall \vp\in C^\infty_c(\R^2_+) \qquad \int_0^\infty\int_0^\infty f(r,s)\overline{(G_{\a,\b}-\lambda I)\vp(r,s)}\, d\mu_{\a,\b}(r,s)=0,
$$
must necessarily vanish a.e. Choosing $\vp(r,s)=\vp_1(r)\vp_2(s)$, $\vp_1,\vp_2\in C^\infty_c(0,\infty)$ we obtain 
\begin{align*}
0&=\int_0^\infty\Big(\int_0^\infty f(r,s)\overline{(B_{1,\a}+r^2B_{2,\b}-\lambda I)\vp_1(r)\vp_2(s)}\, d\mu_{\a}(r)\Big)d\mu_{\b}(s)\\
&=\int_0^\infty\Big(\int_0^\infty f(r,s)    \overline{B_\a \vp_1(r)}\,d\mu_{\a}(r)\Big) \overline{\vp_2(s)}\,d\mu_{\b}(s)\\
&\,\,\,+\int_0^\infty\Big(\int_0^\infty f(r,s) \overline{(r^2-\lambda)\vp_1(r)}\,d\mu_{\a}(r)\Big)  \overline{B_\b\vp_2(s)}\,d\mu_{\b}(s)\\
&=\int_0^\infty\Big(\int_0^\infty \calH_\b [f(r,\cdot)](\eta) \overline{(B_\a+\eta^2r^2-\eta^2\lambda)\vp_1(r)}\,d\mu_{\a}(r)\Big)  \overline{\calH_\b\vp_2(\eta)}\,d\mu_{\b}(\eta).
\end{align*}
For the last equality we used an equivalent version of Plancherel's identity for $\calH_\b$ and the fact that the Hankel transform $\calH_\b$ of the function
$$
s\to \int_0^\infty f(r,s)    \overline{B_\a \vp_1(r)}\,d\mu_{\a}(r),
$$ 
evaluated at $\eta>0$, with $r>0$ fixed, is $\int_0^\infty \calH_\b [f(r,\cdot)](\eta) \overline{B_\a\vp_1(r)}\,d\mu_{\a}(r)$, and similarly for 
$$
s\to \int_0^\infty f(r,s) \overline{(r^2-\lambda)\vp_1(r)}   \,d\mu_{\a}(r).
$$ 

Since  $\{\calH_\b\vp_2\colon \vp_2\in C^\infty_c(0,\infty)\}$ is dense in $L^2(d\mu_\b)$, we infer that for every $\vp_1\in C^\infty_c(0,\infty)$,
$$
\int_0^\infty \calH_\b [f(r,\cdot)](\eta) \overline{(B_\a+\eta^2r^2-\eta^2\lambda)\vp_1(r)}\,d\mu_{\a}(r)=0, \qquad \eta\in(0,\infty)-{\rm a.e}.
$$
But for every $\eta\in(0,\infty)$ the operator $L_{\a,\eta}=B_\a+\eta^2r^2$ with $\D\, L_{\a,\eta}=C^\infty_c(0,\infty)\subset L^2(d\mu_\a)$, is 
essentially self-adjoint,  see Lemma~\ref{lem:app1}, hence $\calH_\b [f(r,\cdot)](\eta)=0$ for a.e. $r\in(0,\infty)$. It follows that $f(r,\cdot)=0$ 
for a.e. $r\in(0,\infty)$ and finally $f=0$.
\end{proof}
It will follow from Theorem~\ref{thm:second} that for every $(\a,\b)\in (-1,1)\times(-1,1)\setminus\{(0,0)\}$, $G_{\a,\b}$ fails to be essentially 
self-adjoint, because for any such  $(\a,\b)$ at least two different self-adjoint extensions of $G_{\a,\b}$ are constructed. Currently, we are not 
able to verify if for $|\a|<1$ and $|\b|\ge1$, $G_{\a,\b}$ is or is not essentially self-adjoint and the same lack of answer concerns $G_{0,0}$.

\subsection{Liouville forms; Grushin-type operator, II} \label{ssec:Grushin2}
From now on we shall write $L^2(0,\infty)$ and $L^2\big(\R^2_+\big)$ to denote the spaces of square integrable functions on $(0,\infty)$ or $\R^2_+$, respectively, equipped with Lebesgue measures. Then $\langle\cdot,\cdot\rangle$ will stand for the canonical inner product (in both spaces; it will 
be clear from the context to which of the two spaces this symbol refers to). 

Although the term 'Liouville (normal) form' seems to be reserved for differential equations (see, for instance, \cite[Section 7]{E}), 
similarly to \cite[Section 6.2]{St2} we adopt it here in the operator theory context.~
\footnote{\textcolor[rgb]{1,0,0}{$\heartsuit$} It was not fortunate to consider the operator $\calL^\circ_{\{w,r,s\}}$, defined in \cite[Section 6.2]{St2}, as the `Liouville form' of 
$\calL_{\{w,r,s\}}$ for general Liouville triple $\{w,r,s\}$. However, in the case we consider, calling $B^{\circ}_\a$ and $L_\a^{\circ}$  the Liouville forms 
of $B_\a$ and $L_\a$ is adequate.} 
The Liouville forms of $B_\a$ and $L_\a$, $\a\in\R$, are
\begin{equation*}
B^{\circ}_\a=-\frac{d^2}{dr^2}+\frac{\a^2-1/4}{r^2},
\end{equation*}
and 
\begin{equation*}
L_\a^{\circ}=-\frac{d^2}{dr^2}+\frac{\a^2-1/4}{r^2}+r^2,
\end{equation*}
respectively; see \cite[Section 12]{E}. Both $B^{\circ}_\a$ and $L_\a^{\circ}$ are considered with domain $C^\infty_c(0,\infty)\subset L^2(0,\infty)$. 
Obviously, investigating $B^{\circ}_\a$ and $L_\a^{\circ}$ we could assume that $\a\ge0$ (notice that the Liouville forms of $B_{-\a}$ and $B_\a$, $\a\neq0$, coincide; the same holds for the pair $L_{-\a}$ and $L_\a$). For reasons already seen in the $B_\a$ and $L_\a$ contexts it is reasonable to consider $\a>-1$.

Let us mention that from now on we shall consequently apply the following convention: by affixing `$\circ$' as a superscript to an object previously
considered in the $L^2(d\mu_\a)$ or $L^2(d\mu_{\a,\b})$ setting, we shall mean a corresponding object related to its `Liouville form' in the $L^2(0,\infty)$ 
or $L^2\big(\R^2_+\big)$ setting, respectively.

The connection between $B_\a$ and $B^{\circ}_\a$ is expressed through the fact that these operators are intertwined by the unitary isomorphism 
$$
U_\a\colon L^2(d\mu_\a)\to L^2(0,\infty), \qquad U_\a f=(\cdot)^{\a+1/2}f.
$$
This means that $B^{\circ}_\a\circ U_\a=U_\a\circ B_\a$ on $\D\, B_\a$. Thus the spectral properties of $B_\a$ and $B^{\circ}_\a$ are the same. Notably, 
$B^{\circ}_\a$ is symmetric and nonnegative on $L^2(0,\infty)$, is essentially self-adjoint if and only if $\a\ge1$, and the canonical self-adjoint 
extension $\mathbb B_\a^\circ$ of $B_\a^\circ$ in $L^2(0,\infty)$, $\a\ge0$, is defined accordingly. Analogous comments concern the pair $L_\a$ and 
$L^{\circ}_\a$, and the relevant properties of the operator $L^{\circ}_\a$.

In particular, we can now replace $\calH_\a$ by $\calH_\a^\circ:=U_\a \circ \calH_\a\circ U_\a^{-1}$, $\a>-1$. More specifically, for suitable $f$, 
$$
\calH_\a^\circ f(\tau)=\int_0^\infty f(u) \tilde{J}_{\a,\tau}^\circ(u)\,du,  \qquad \tau>0,
$$
where $\tilde{J}_{\a,\tau}^\circ(u)=(\tau u)^{1/2}J_\a(\tau u)$ so that $B_\a^\circ \tilde{J}_{\a,\tau}^\circ=\tau^2 \tilde{J}_{\a,\tau}^\circ$. 
Clearly, $\calH_\a^\circ$  possesses properties analogous to $\calH_\a$, in particular, appropriate versions of Plancherel's identity and the inverse 
identity hold; also \eqref{2.1} holds with replacement of  $\calH_\a$ and $B_\a$ by $\calH_\a^\circ$ and $B_\a^\circ$, respectively.

In the new setting we next replace $\{\ell_n^\a\}_{n=0}^\infty$ by the system of Laguerre functions (of Hermite type) $\{\ell_n^{\a,\circ}\}_{n=0}^\infty$,~ 
\footnote{$\diamondsuit$ This is a nonstandard notation of Laguerre functions of Hermite type caused by our agreement for the use of the  `$\circ$' symbol. Usually, the symbol $\vp^\a_n$ is used to denote them.} 
$\a>-1$, 
$$
\ell_{n}^{\a,\circ}(r)=\Big(\frac{2\Gamma(n+1)}{\Gamma(n+\a+1)}\Big)^{1/2}L^\a_n(r^2)\exp(-r^2/2)r^{\a+1/2},\qquad r>0, 
$$
or, more generally, by the scaled system $\{\ell_{n,\tau}^{\a,\circ}\}_{n=0}^\infty$, $\tau>0$, where $\ell_{n,\tau}^{\a,\circ}:=U_\a \ell_{n,\tau}^\a$, i.e.
$$
\ell_{n,\tau}^{\a,\circ}(r)=\tau^{1/4}\ell_n^{\a,\circ}(\sqrt\tau r),\qquad r>0.
$$
Then $\{\ell_{n,\tau}^{\a,\circ}\}_{n=0}^\infty$ is an orthonormal basis in $L^2(0,\infty)$ consisting of  eigenfunctions of the scaled operator $L_{\a,\tau}^\circ:=B_\a^\circ+\tau^2 r^2$,
$$
L_{\a,\tau}^\circ \ell_{n,\tau}^{\a,\circ}=\lambda_{n}^{\a, \circ}\tau \ell_{n,\tau}^{\a,\circ}, \qquad \lambda_{n}^{\a, \circ}=\lambda_{n}^{\a}=2(2n+\a+1), \quad n\in \N.
$$
Further, we replace $\calL_\a$ by the transform $\calL^\circ_\a:=\calL_\a\circ U_\a^{-1}$, 
$$
\calL^\circ_{\a,\tau} f(n):=\calL^\circ_{\a} f(n,\tau)=\langle f,\ell^{\a,\circ}_{n,\tau}\rangle, \qquad f\in L^2(0,\infty), \quad n\in\N, \quad \tau>0,
$$ 
so that $\calL_{\a,\tau}^\circ\colon L^2(0,\infty)\to \ell^2(\N)$ is a unitary isomorphism, 
$\calL_{\a,\tau}^{\circ;-1}(\{c_n\})\to \sum_0^\infty c_n \ell^{\a,\circ}_{n,\tau}$, and 
$$
\calL_{\a}^\circ(L_{\a,\tau}^\circ \vp)(n,\tau)=\lambda_{n}^{\a,\circ}\tau \calL_{\a}^\circ \vp(n,\tau),\qquad \vp\in C^\infty_c(\R^2_+)\quad n\in\N,\quad\tau>0.
$$

We now call the operator 
\begin{equation*}
G_{\a,\b}^\circ=B_{1,\a}^\circ+r^2 B_{2,\b}^\circ
= \Big(-\frac{\partial^2}{\partial r^2}+\frac{\a^2-1/4}{r^2}\Big)+r^2\Big(-\frac{\partial^2}{\partial s^2}+\frac{\b^2-1/4}{s^2} \Big),
\end{equation*}
considered on $C^\infty_c\big(\R^2_+\big)$, the Liouville form of $G_{\a,\b}$; notice that 
$$
G_{1/2,1/2}^\circ=-\frac{\partial^2}{\partial r^2}-r^2\frac{\partial^2}{\partial s^2}=G_{-1/2,-1/2}. 
$$ 
Notice also that given $\a\ge0,\b\ge0$, the Liouville form of each  of the operators $G_{-\a,-\b}, G_{-\a,\b}$ and $G_{\a,-\b}$, 
coincides with $G_{\a,\b}^\circ$.

\section{$\calG^\circ$-transform and its inverse} \label{sec:Gru}  

In this section we define a transform which will be crucial for the spectral analysis of the Grushin-type operators. Using the scaled Laguerre transform 
in this definition was motivated by \cite{JST}, where the system of scaled Hermite functions was used to describe the spectral decomposition of the Grushin 
operator.

Let $\a,\b>-1$. For a suitable function $f=f(r,s)$ on $\R^2_+$, we define its transform $\calG_{\a,\b}^\circ f$ as a function on $\N\times(0,\infty)$ given 
as the integral transform based on the eigenfunctions of $G_{\a,\b}^\circ$, 
$$
\Psi_{n,\tau}^\circ(r,s):=\ell^{\a,\circ}_{n,\tau}(r)\tilde{J}_{\b,\tau}^\circ(s), \qquad n\in\N,\quad \tau>0,
$$
 Namely,
\begin{equation} \label{3.1}
\calG_{\a,\b}^\circ f(n,\tau)=\calka\calka f(r,s)\Psi_{n,\tau}^\circ(r,s)\,drds, \qquad n\in\N,\quad \tau>0,
\end{equation}
for any $f$ such that the double integral converges for every $n\in\N$ and almost every $\tau>0$. Obviously, $f\in C^\infty_c\big(\R^2_+\big)$ has this property.  

Notice, that  
$$
G_{\a,\b}^\circ\Psi_{n,\tau}^\circ=\lambda_{n}^{\a,\circ} \tau \Psi_{n,\tau}^\circ,
$$
so, using twice integration by parts, for $\vp\in C^\infty_c\big(\R^2_+\big)$ we obtain
\begin{equation}\label{3.2}
\calG_{\a,\b}^\circ (G_{\a,\b}^\circ \vp)(n,\tau)=\lambda_{n}^{\a,\circ}\tau \calG_{\a,\b}^\circ \vp(n,\tau), \qquad n\in\N,\quad \tau>0.
\end{equation}

The $\calG^\circ$-transform is for $\vp\in C^\infty_c\big(\R^2_+\big)$ expressible through the Hankel and Laguerre scaled transforms 
which we have to our disposal; recall that the Hankel transform $\calH_\b^\circ$, $\b>-1$, is a unitary automorphism of $L^2(0,\infty)$ and the Laguerre 
scaled  transform $\calL^\circ_{\a,\tau}$, $\a>-1$, $\tau>-1$, acts on $L^2(0,\infty)$ attaching to $g\in L^2(0,\infty)$ the  sequence  
$\{\langle g,\ell^\circ_{n,\tau}\rangle\}_{n=0}^\infty$. Namely, denoting $\vp_r:=\vp(r,\cdot)$ we have
\begin{equation}\label{3.3}
\calG_{\a,\b}^\circ \vp(n,\tau)=\calka \ell^{\a,\circ}_{n,\tau}(r)\calH_\b^\circ \vp_r(\tau)\,dr=\calL^\circ_{\a,\tau} \big[\calH_\b^\circ \vp_r(\tau)\big](n).
\end{equation}
The expression inside the square brackets is treated, for fixed $\tau$, as a function of $r$ to which $\calL^\circ_{\a,\tau}$ is applied. 
Similarly to this case, in several places below, square brackets will indicate that expressions inside them should be properly identified.
This means, that one has to recognize a variable among some other characters. We hope, this will not lead to a confusion, since from the 
context it will be relatively clear which characters appear as fixed parameters.


Observe that the performed calculations leading to \eqref{3.3} allow to extend the action of $\calG_{\a,\b}^\circ$ onto $L^2\big(\R^2_+\big)$ by setting
\begin{equation}\label{3.4a}
\calG_{\a,\b}^\circ f(n,\tau) = \calL^\circ_{\a,\tau} \big[\calH_\b^\circ f_r(\tau)\big](n), \qquad f\in L^2\big(\R^2_+\big).
\end{equation}
The correctness of this definition follows from 
$$
\calka\calka |\calH_\b^\circ f_r(\tau)|^2dr\,d\tau=\calka\calka |\calH_\b^\circ f_r(\tau)|^2d\tau\,dr=
\calka\calka |f_r(s)|^2ds\,dr=\|f\|^2_{L^2(\R^2_+)},
$$
so that $r\to \calH_\b^\circ f_r(\tau)$ is in $L^2(0,\infty)$ for a.e. $\tau\in(0,\infty)$, so that $\calL^\circ_{\a,\tau}$ can be applied. Note that if $f$ has separated variables, $f(r,s)=f_1(r)f_2(s)$, 
$f_i\in C^\infty_c(0,\infty)$, then we simply have
\begin{equation}\label{3.5}
\calG_{\a,\b}^\circ f(n,\tau) = \calL^\circ_{\a,\tau} f_1(n)\, \calH_\b^\circ f_2(\tau).
\end{equation}
From now on we assume the $\calG^\circ$-transform $\calG_{\a,\b}^\circ$ to be given by \eqref{3.4a}.

Let 
$$ 
L^2(\Gamma^\circ):=L^2(\N\times(0,\infty),dn\times d\tau),
$$ 
where  $dn$  is  the counting measure in $\N$ and $d\tau$ is Lebesgue measure in $(0,\infty)$, with norm
$$
\|F\|_{L^2(\Gamma^\circ)}=\Big(\sum_{n=0}^\infty \|F(n,\cdot)\|^2_{L^2(0,\infty)}\Big)^{1/2}=\Big(\sum_{n=0}^\infty \calka |F(n,\tau)|^2\,d\tau\Big)^{1/2}.
$$ 
We define the \textit{inverse transform} $\calG_{\a,\b}^{\circ;-1}$ on $L^2(\Gamma^\circ)$ by 
\begin{equation}\label{3.6}
\calG_{\a,\b}^{\circ;-1}F(r,s)=\calH_\b^\circ\big[\calL_{\a,\tau}^{\circ;-1}F_\tau (r)\big](s), \qquad r>0,\quad s>0.
\end{equation}
The above definition is correct because for $F\in L^2(\Gamma^\circ)$, for almost every $\tau$ we have $F_\tau:=F(\cdot,\tau)\in\ell^2(\N)$ and, for almost 
every $r>0$, the function $\tau\to \calL_{\a,\tau}^{\circ;-1}F_\tau (r)$ is in $L^2(0,\infty)$. Indeed, 
\begin{align*} 
\calka \calka |\calL_{\a,\tau}^{\circ;-1}F_\tau (r)|^2d\tau\,dr=\calka \calka |\calL_{\a,\tau}^{\circ;-1}F_\tau (r)|^2dr\,d\tau
&=\calka \sum_{n=0}^\infty |F(n,\tau)|^2 d\tau<\infty.
\end{align*}

\begin{theorem} \label{thm:first}
Let $\a,\b>-1$. For $f\in L^2\big(\R^2_+\big)$ we have Plancherel's identity
\begin{equation}\label{3.7}
\|\calG_{\a,\b}^\circ f\|_{L^2(\Gamma^\circ)}=\|f\|_{L^2(\R^2_+)}
\end{equation}
and the inverse formula
\begin{equation}\label{3.8}
f=\calG_{\a,\b}^{\circ;-1}\big(\calG_{\a,\b}^\circ f\big).  
\end{equation}
Moreover, for $F\in L^2(\Gamma^\circ)$ we have
\begin{equation}\label{3.9}
\|\calG_{\a,\b}^{\circ;-1} F\|_{L^2(\R^2_+)}=\|F\|_{L^2(\Gamma^\circ)}
\end{equation}
and
\begin{equation}\label{3.10}
F=\calG_{\a,\b}^{\circ}\big(\calG_{\a,\b}^{\circ;-1}F\big).  
\end{equation}
\end{theorem}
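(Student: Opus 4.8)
The plan is to recognize $\calG_{\a,\b}^\circ$ as the composition of two partial transforms, each already known to be unitary, and then read off all four claims from this factorization. Define $\calT_1\colon L^2\big(\R^2_+\big)\to L^2\big(\R^2_+\big)$ by applying the Hankel transform $\calH_\b^\circ$ in the second variable, $\calT_1 f(r,\tau)=\calH_\b^\circ f_r(\tau)$, and $\calT_2\colon L^2\big(\R^2_+\big)\to L^2(\Gamma^\circ)$ by applying the Laguerre scaled transform $\calL^\circ_{\a,\tau}$ in the first variable for each fixed $\tau$, $\calT_2 g(n,\tau)=\calL^\circ_{\a,\tau}\big[g(\cdot,\tau)\big](n)$. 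Comparing with \eqref{3.4a} we have $\calG_{\a,\b}^\circ=\calT_2\circ\calT_1$, and comparing with \eqref{3.6} we shall see that $\calG_{\a,\b}^{\circ;-1}=\calT_1\circ\calT_2^{-1}$.

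First I would verify that $\calT_1$ is a unitary automorphism of $L^2\big(\R^2_+\big)$. This is essentially the displayed Fubini computation preceding \eqref{3.5}: for a.e.\ $r$, Plancherel for $\calH_\b^\circ$ gives $\calka|\calH_\b^\circ f_r(\tau)|^2\,d\tau=\calka|f_r(s)|^2\,ds$, and integrating in $r$ (Tonelli, nonnegative integrand) yields $\|\calT_1 f\|_{L^2(\R^2_+)}=\|f\|_{L^2(\R^2_+)}$; since $\calH_\b^\circ$ is an involution on $L^2(0,\infty)$, so is $\calT_1$. Next I would verify that $\calT_2$ is a unitary isomorphism onto $L^2(\Gamma^\circ)$: for a.e.\ $\tau$, Parseval for $\calL^\circ_{\a,\tau}$ gives $\sum_{n=0}^\infty|\calT_2 g(n,\tau)|^2=\calka|g(r,\tau)|^2\,dr$, and integrating in $\tau$ (again Tonelli) gives $\|\calT_2 g\|_{L^2(\Gamma^\circ)}=\|g\|_{L^2(\R^2_+)}$, with inverse $\calT_2^{-1}F(r,\tau)=\calL_{\a,\tau}^{\circ;-1}F_\tau(r)$ as in the computation following \eqref{3.6}. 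Composing, $\|\calG_{\a,\b}^\circ f\|_{L^2(\Gamma^\circ)}=\|\calT_2(\calT_1 f)\|_{L^2(\Gamma^\circ)}=\|\calT_1 f\|_{L^2(\R^2_+)}=\|f\|_{L^2(\R^2_+)}$, which is \eqref{3.7}.

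For the inversion I would simply chase the factorization. Since $\calH_\b^\circ$ is its own inverse, $\calT_1^{-1}=\calT_1$, and $\calT_2^{-1}$ is the partial inverse Laguerre transform above. Substituting $F=\calG_{\a,\b}^\circ f=\calT_2(\calT_1 f)$ into \eqref{3.6}, the inner application of $\calL_{\a,\tau}^{\circ;-1}$ undoes $\calL^\circ_{\a,\tau}$ (for a.e.\ $\tau$, as inverse maps on $L^2(0,\infty)$), returning $\calH_\b^\circ f_r(\tau)$, and the outer $\calH_\b^\circ$ then undoes the first Hankel transform, returning $f(r,s)$; this is \eqref{3.8}. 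Running the identical argument in the opposite order, using that $\calG_{\a,\b}^{\circ;-1}=\calT_1\circ\calT_2^{-1}$ is the genuine two-sided inverse of the unitary $\calG_{\a,\b}^\circ$, gives \eqref{3.9} and \eqref{3.10}.

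I expect the only real obstacle to be the Fubini/Tonelli bookkeeping that legitimizes the partial transforms as $L^2$ maps: one must know that the intermediate functions $(r,\tau)\mapsto \calH_\b^\circ f_r(\tau)$ and $(n,\tau)\mapsto \calG_{\a,\b}^\circ f(n,\tau)$ are jointly measurable, so that the iterated norms above are genuinely the $L^2(\R^2_+)$ and $L^2(\Gamma^\circ)$ norms, and that for a.e.\ fixed value of the frozen variable the relevant slice lies in the appropriate $L^2$ space so that the one-variable unitary applies. The nonnegativity of all integrands makes Tonelli available and removes any integrability worry; the joint measurability is routine, following for $f\in C^\infty_c\big(\R^2_+\big)$ from the explicit kernels and then for general $f\in L^2\big(\R^2_+\big)$ by approximation, exactly as anticipated in the correctness discussion accompanying \eqref{3.4a} and \eqref{3.6}.
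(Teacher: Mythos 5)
Your proposal is correct and is essentially the paper's own argument: the paper's proof of \eqref{3.7}--\eqref{3.10} consists of exactly your two steps (slice-wise Parseval for $\calL^\circ_{\a,\tau}$, slice-wise Plancherel and the involution property for $\calH^\circ_\b$, glued by Tonelli/Fubini), only written out inline rather than packaged as the factorization $\calG^\circ_{\a,\b}=\calT_2\circ\calT_1$, $\calG^{\circ;-1}_{\a,\b}=\calT_1\circ\calT_2^{-1}$. Your explicit factorization is a clean way to organize the same computations, and your attention to joint measurability is, if anything, more careful than the paper's treatment.
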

\begin{proof}
For \eqref{3.7} we first note that by Parseval's identity for  $\calL^\circ_{\a,\tau}$, $\tau>0$, we obtain 
$$
\sum_{n=0}^\infty |\calG_{\a,\b}^\circ f(n,\tau)|^2 =
\sum_{n=0}^\infty \big|\calL^\circ_{\a,\tau} \big[\calH_\b^\circ f_r(\tau)\big](n)\big|^2 =
\|\calH_\b^\circ f_r(\tau)\|^2_{L^2(0,\infty)}=\calka|\calH_\b^\circ(f_r)(\tau)|^2\,dr.
$$
Then integrating with $\tau$ and using Tonelli's theorem and Plancherel's identity for $\calH_\b^\circ $ gives
$$
\calka\Big(\sum_{n=0}^\infty|\calG_{\a,\b}^\circ f(n,\tau)|^2\Big)d\tau=\calka\calka|\calH_\b^\circ f_r(\tau)|^2\,d\tau\,dr=\calka\calka|f_r(s)|^2\,ds\,dr,
$$
which proves \eqref{3.7}.

To verify \eqref{3.8}, note that by \eqref{3.7}, indeed $\calG_{\a,\b}^\circ f\in L^2(\Gamma^\circ)$ so $\calG_{\a,\b}^{\circ;-1}$ can be applied. 
Hence, recalling that by \eqref{3.4a}, $(\calG_{\a,\b}^\circ f)_\tau=\calL^\circ_{\a,\tau} \big[r\to\calH_\b^\circ f_r(\tau)\big]$, using \eqref{3.6} 
gives for almost every $u>0$ 
\begin{align*} 
\calG_{\a,\b}^{\circ;-1}\big(\calG_{\a,\b}^\circ f\big)(u,\cdot)=\calH_\b^\circ\big[\tau\to\calL_{\a,\tau}^{\circ;-1}(\calG_{\a,\b}^\circ f)_\tau (u)\big]
&=\calH_\b^\circ\big[\tau\to\calL_{\a,\tau}^{\circ;-1}\big(\calL_{\a,\tau}^{\circ}[r\to\calH_\b^\circ f_r(\tau)]\big)(u)\big]\\
&=\calH_\b^\circ\big(\calH_\b^\circ f_u \big)\\ 
&= f_u;
\end{align*} 
to avoid a notational collision we wrote $(u,\cdot)$ rather than expected $(r,s)$, and to avoid getting lost at some places we 
prompted to which function $\calL^\circ_{\a,\tau}$ or $\calH_\b^\circ$ applies.

For \eqref{3.9} we write
\begin{align*}
\calka\calka|\calG_{\a,\b}^{\circ;-1}F(r,s)|^2dr\,ds&=\calka\calka \big|\calH_\b^\circ \big[\calL_{\a,\tau}^{\circ;-1}F_\tau (r)\big](s)\big|^2ds\,dr\\
&=\calka\calka\big|\calL_{\a,\tau}^{\circ;-1}F_\tau (r)\big|^2d\tau\,dr
=\|F\|^2_{L^2(\Gamma^\circ)}.
\end{align*}

To verify \eqref{3.10}, observing first that by \eqref{3.9} $\calG_{\a,\b}^{\circ;-1}F\in L^2(\R^2_+)$, we have 
$$
\calG_{\a,\b}^{\circ}\big(\calG_{\a,\b}^{\circ;-1}F\big)(n,\tau)
=\calL^\circ_{\a,\tau}\Big[\calH_\b^\circ\big((\calG_{\a,\b}^{\circ;-1}F)_r\big)(\tau)\Big](n)
=\big\langle  \calH_\b^\circ\big((\calG_{\a,\b}^{\circ;-1}F)_r\big)(\tau),\ell^{\a,\circ}_{n,\tau}\big\rangle.
$$
The first term in the last brackets is understood, with $\tau$ fixed, as a function of $r$ which is
$$
r\to \calH_\b^\circ\big((\calG_{\a,\b}^{\circ;-1}F)_r\big)(\tau)=\calH_\b^\circ\big(\calH_\b^\circ[\xi\to  \calL_{\a,\xi}^{\circ;-1}F_\xi(r)]\big)(\tau)=
\calL_{\a,\tau}^{\circ;-1}F_\tau(r)=\sum_{k=0}^\infty F(k,\tau)\ell^{\a,\circ}_{k,\tau}(r),
$$
hence the value of the last brackets, where $r$ is  also the variable of integration, indeed equals $F(n,\tau)$.
\end{proof}
\begin{corollary} \label{cor:first}
Let $\a,\b>-1$. The mappings
$$
\calG_{\a,\b}^{\circ}\colon  L^2\big(\R^2_+\big)\to L^2(\Gamma^\circ) \quad {\rm and} 
\quad \calG_{\a,\b}^{\circ;-1}\colon L^2(\Gamma^\circ)\to L^2\big(\R^2_+\big)
$$
are unitary isomorphisms.
\end{corollary}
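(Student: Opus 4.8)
The plan is to read the corollary as a direct repackaging of the four identities collected in Theorem~\ref{thm:first}, so that essentially no new analytic work is required; the task is only to assemble \eqref{3.7}--\eqref{3.10} into the statement that each of $\calG_{\a,\b}^{\circ}$ and $\calG_{\a,\b}^{\circ;-1}$ is a linear, norm-preserving bijection between the stated spaces. First I would record linearity: by the defining formula \eqref{3.4a}, $\calG_{\a,\b}^\circ f(n,\tau)=\calL^\circ_{\a,\tau}\big[\calH_\b^\circ f_r(\tau)\big](n)$ is a composition of the two linear transforms $\calH_\b^\circ$ (acting in the $s$-variable) and $\calL^\circ_{\a,\tau}$ (acting in the $r$-variable), hence $\calG_{\a,\b}^\circ$ is linear; the analogous observation applied to \eqref{3.6} gives linearity of $\calG_{\a,\b}^{\circ;-1}$.

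Next I would invoke isometry and bijectivity directly. Identity \eqref{3.7} says precisely that $\calG_{\a,\b}^{\circ}\colon L^2\big(\R^2_+\big)\to L^2(\Gamma^\circ)$ is an isometry, and \eqref{3.9} says the same for $\calG_{\a,\b}^{\circ;-1}\colon L^2(\Gamma^\circ)\to L^2\big(\R^2_+\big)$; in particular both maps are well defined between the indicated spaces and injective. For surjectivity I would use \eqref{3.8} and \eqref{3.10}, which assert $\calG_{\a,\b}^{\circ;-1}\circ\calG_{\a,\b}^{\circ}=\mathrm{id}$ on $L^2\big(\R^2_+\big)$ and $\calG_{\a,\b}^{\circ}\circ\calG_{\a,\b}^{\circ;-1}=\mathrm{id}$ on $L^2(\Gamma^\circ)$. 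Together these show the two operators are mutually two-sided inverses, so each is a bijection and $(\calG_{\a,\b}^{\circ})^{-1}=\calG_{\a,\b}^{\circ;-1}$. Since a surjective linear isometry between Hilbert spaces preserves inner products by the polarization identity, each map is unitary; this is the concluding line of the argument.

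I do not expect a genuine obstacle here, since Theorem~\ref{thm:first} has already carried out the analytic heavy lifting (the Tonelli/Fubini interchanges together with the Plancherel identities for $\calH_\b^\circ$ and for $\calL^\circ_{\a,\tau}$). The only point deserving a moment's care is to claim ``unitary isomorphism'' with its full force: isometry alone does not force surjectivity, so one must explicitly cite both \eqref{3.8} and \eqref{3.10}, and not merely \eqref{3.7} and \eqref{3.9}, to secure that each map is onto. With linearity, the two isometry identities, and the two composition identities in hand, the corollary follows in a few lines.
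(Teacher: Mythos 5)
Your proposal is correct and follows essentially the same route as the paper's proof: the paper likewise cites \eqref{3.7} and \eqref{3.9} for the isometry property and \eqref{3.8} and \eqref{3.10} for bijectivity, and your additional remarks on linearity and polarization are just making explicit what the paper leaves implicit. Your observation that both composition identities are needed (isometry alone does not give surjectivity) matches exactly the logical structure of the paper's two-sentence argument.
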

\begin{proof}
It follows from \eqref{3.7} and \eqref{3.9} that $\calG_{\a,\b}^{\circ}$ and $\calG_{\a,\b}^{\circ;-1}$ are isometries, hence unitary mappings. 
On the other hand, \eqref{3.8} and \eqref{3.10} imply that $\calG_{\a,\b}^{\circ}$ and $\calG_{\a,\b}^{\circ;-1}$ are bijections, hence isomorphisms.
\end{proof}

We now reinterpret Example~\ref{ex1} evaluating the  $\calG_{\a,\b}^{\circ}$ transform of the skewed gaussian function 
$$
f(r,s)=r^{\a+1/2}s^{\b+1/2} \exp\big(-(r^2+s^2)/2\big), 
$$ 
and checking, by a direct calculation, Plancherel's identity for $f$.
\begin{example}\label{ex2} From Example \ref{ex1} it follows that for
 $g^\circ(r):=U_\a g(r)=r^{\a+1/2}\exp(-r^2/2)$ we have 
$$
\calL_{\a}^\circ g^\circ(n,\tau)=\frac{2^{\a+1}}{c_{n,\a}}\Big(\frac{\sqrt{\tau}}{1+\tau}\Big)^{\a+1}\Big(\frac{1-\tau}{1+\tau}\Big)^n.
$$
On the other hand, for $h^\circ(s):=s^{\b+1/2}\exp(-s^2/2)$ we have $\calH_\b^\circ h^\circ=h^\circ$ (recall that 
$\calH_\b^\circ = U_\b\circ \calH_\b\circ U_\b^{-1}$). Hence, for $f(r,s)=g^\circ(r)h^\circ(s)$ we obtain 
$$
\calG_{\a,\b}^\circ f(n,\tau)=\calL^\circ_\a (g^\circ)(n,\tau)\, \calH_\b^\circ(h^\circ)(\tau)=
\frac{2^{\a+1}}{c_{n,\a}}\Big(\frac{\sqrt{\tau}}{1+\tau}\Big)^{\a+1}\Big(\frac{1-\tau}{1+\tau}\Big)^n\tau^{\b+1/2}\exp(-\tau^2/2).
$$
It follows that
$$
\sum_{n=0}^\infty |\calG_{\a,\b}^\circ f(n,\tau)|^2= \tau^{2\b+1}\exp(-\tau^2) \sum_{n=0}^\infty |\calL_{\a}^\circ( g^\circ)(n,\tau)|^2= \tau^{2\b+1}\exp(-\tau^2) \frac12\Gamma(\a+1).
$$
Finally,
$$
\int_0^\infty\sum_{n=0}^\infty |\calG_{\a,\b}^\circ f(n,\tau)|^2\,d\tau =\frac14\Gamma(\a+1)\Gamma(\b+1)=\calka|f(r,s)|^2\,drds.
$$
\end{example}

\section{Self-adjoint extensions of $G^\circ_{\a,\b}$} \label{sec:self}  
For $\a>-1$ consider  the function 
$$
\Theta_\a\colon \N\times(0,\infty)\to(0,\infty),\qquad \Theta_\a(n,\tau)=\lambda^{\a,\circ}_n\tau,
$$
and the corresponding multiplication operator $\bbM^\circ_\a:=\bbM^\circ_{\Theta_\a}$ on $L^2(\Gamma^\circ)$, 
\begin{align*}
\D\,\bbM^\circ_\a&=\big\{F\in L^2(\Gamma^\circ)\colon \{\lambda^{\a,\circ}_n\tau F(n,\tau)\}_{(n,\tau)\in\N\times(0,\infty)}\in L^2(\Gamma^\circ)\big\},\\
\bbM^\circ_\a F(n,\tau)&=\lambda^{\a,\circ}_n\tau F(n,\tau)\quad {\rm for}\quad n\in\N,\,\tau>0, \qquad F\in\D\, \bbM^\circ_\a.
\end{align*}
Then $\bbM^\circ_\a$ is self-adjoint and nonnegative, and the spectrum of $\bbM^\circ_\a$ coincides with $[0,\infty)$; see Lemma~\ref{lem:spe}.
Observe also, that as a subspace of $L^2(\Gamma^\circ)$, $\D\,\bbM^\circ_\a$ does not depend on $\a>-1$. This 
is because the sequences $\{\lambda^{\a,\circ}_n\}_{n=0}^\infty$ are comparable for different $\a>-1$; we have
$$
\D\,\bbM^\circ_\a=\{F\in L^2(\Gamma^\circ)\colon \{(n+1)\tau F(n,\tau)\}_{(n,\tau)\in\N\times(0,\infty)}\in L^2(\Gamma^\circ)\big\}, \qquad \a>-1.
$$ 

Now, for  $\a,\b>-1$, we define $\bbG$, the transfer of $\bbM^\circ_\a$ on $L^2\big(\R^2_+\big)$ through the unitary isomorphisms 
$\calG_{\a,\b}^{\circ}\colon  L^2\big(\R^2_+\big)\to L^2(\Gamma^\circ)$ and its inverse $\calG_{\a,\b}^{\circ;-1}\colon L^2(\Gamma^\circ) \to L^2\big(\R^2_+\big)$, by 
\begin{align*}
\D\,\bbG&:=\calG_{\a,\b}^{\circ;-1}(\D\,\bbM^\circ_\a)= \{f\in L^2\big(\R^2_+\big)\colon \calG_{\a,\b}^{\circ}f\in \D\,\bbM^\circ_\a\},\\
\bbG&:=\calG_{\a,\b}^{\circ;-1}  \circ \bbM^\circ_\a\circ\calG_{\a,\b}^{\circ},
\end{align*}
It follows that $\bbG$ is self-adjoint and nonnegative, and the spectrum of $\bbG$ is $[0,\infty)$. 

The following theorem is the main result of this section. In its proof we use Proposition~\ref{pro:sec} whose justification requires relatively technical 
computations, hence it is postponed. 
\begin{theorem} \label{thm:second}
Let $\a,\b>-1$. Then each of the operators, $\bbG$, $\mathbb{G}^\circ_{-\a,\b}$ when $\a<1$, $\mathbb{G}^\circ_{\a,-\b}$ when $\b<1$, and 
$\mathbb{G}^\circ_{-\a,-\b}$ when $\a<1$ and $\b<1$, is a self-adjoint extension of $G^\circ_{\a,\b}$. Moreover, $\bbG$ and $\mathbb{G}^\circ_{-\a,\b}$ differ  when $0<\a<1$, $\bbG$ and $\mathbb{G}^\circ_{\a,-\b}$ differ  when $0<\b<1$, and $\bbG$ and $\mathbb{G}^\circ_{-\a,-\b}$ differ when $0<\a<1$ and $0<\b<1$.
\end{theorem}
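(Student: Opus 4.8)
The plan is to treat the two assertions of Theorem~\ref{thm:second} separately: the first (each operator is a self-adjoint extension of $G^\circ_{\a,\b}$) is essentially formal, while the second (the operators genuinely differ) requires constructing explicit separating functions. For the extension claim the crucial observation is that the \emph{differential} expression $G^\circ_{\a,\b}$ depends on $\a,\b$ only through $\a^2,\b^2$, so $G^\circ_{\gamma,\delta}=G^\circ_{\a,\b}$ as differential operators whenever $|\gamma|=\a$, $|\delta|=\b$; moreover the sign restrictions $\a<1$, $\b<1$ in the statement are exactly the conditions $-\a>-1$, $-\b>-1$ needed for the transform $\calG^\circ_{\gamma,\delta}$, and hence $\mathbb{G}^\circ_{\gamma,\delta}$, to be defined. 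Each $\mathbb{G}^\circ_{\gamma,\delta}$ is self-adjoint and nonnegative by the same construction that gives this for $\bbG$. To see that it extends $G^\circ_{\a,\b}$, I would take $\vp\in C^\infty_c(\R^2_+)$ and compute, using \eqref{3.2} and $G^\circ_{\gamma,\delta}=G^\circ_{\a,\b}$,
$$
\bbM^\circ_\gamma\,\calG^\circ_{\gamma,\delta}\vp=\calG^\circ_{\gamma,\delta}\big(G^\circ_{\gamma,\delta}\vp\big)=\calG^\circ_{\gamma,\delta}\big(G^\circ_{\a,\b}\vp\big).
$$
Since $G^\circ_{\a,\b}\vp\in C^\infty_c(\R^2_+)\subset L^2(\R^2_+)$, Plancherel's identity \eqref{3.7} puts the right-hand side in $L^2(\Gamma^\circ)$, so $\calG^\circ_{\gamma,\delta}\vp\in\D\,\bbM^\circ_\gamma$; the inversion formula \eqref{3.8} then yields $\mathbb{G}^\circ_{\gamma,\delta}\vp=G^\circ_{\a,\b}\vp$, which is the extension property.

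For the second assertion I would use the standard fact that two self-adjoint extensions of a single symmetric operator, both being restrictions of the common adjoint $(G^\circ_{\a,\b})^*$, must agree on the intersection of their domains; hence they differ precisely when their domains differ, and it suffices to exhibit a function lying in one domain but not the other. For a function $f(r,s)=f_1(r)f_2(s)$ with separated variables, \eqref{3.5} and the definition of $\bbM^\circ_\gamma$ reduce membership in $\D\,\mathbb{G}^\circ_{\gamma,\delta}$ to the summability condition
$$
\sum_{n=0}^\infty\calka\big(\lambda^{\gamma,\circ}_n\tau\big)^2\,\big|\calL^\circ_\gamma f_1(n,\tau)\big|^2\,\big|\calH^\circ_\delta f_2(\tau)\big|^2\,d\tau<\infty.
$$
The idea is to feed in a skewed gaussian carrying the ``wrong'' boundary exponent in the appropriate variable, so that this condition holds for one extension and fails for the competing one.

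Concretely, to separate $\bbG$ from $\mathbb{G}^\circ_{-\a,\b}$ when $0<\a<1$, I would take $f(r,s)=r^{-\a+1/2}e^{-r^2/2}\cdot s^{\b+1/2}e^{-s^2/2}$ (which lies in $L^2(\R^2_+)$ precisely because $\a<1$). Running Example~\ref{ex2} with $-\a$ in place of $\a$ shows $\calL^\circ_{-\a}$ of the first factor decays geometrically in $n$, while $\calH^\circ_\b$ fixes the second factor, so $f\in\D\,\mathbb{G}^\circ_{-\a,\b}$. In the $\bbG$-picture, however, one must expand the $(-\a)$-gaussian against the $\a$-basis, i.e. evaluate $\langle r^{-\a+1/2}e^{-r^2/2},\ell^{\a,\circ}_{n,\tau}\rangle$; a Laplace-transform-of-Laguerre computation produces a coefficient decaying only polynomially, of order $n^{\a/2-1}$, so that for each fixed $\tau$ the $n$-series behaves like $\sum_n n^{\a}=\infty$ and $f\notin\D\,\bbG$. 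The remaining comparisons are analogous: to separate $\bbG$ from $\mathbb{G}^\circ_{\a,-\b}$ for $0<\b<1$ I would take the wrong $s$-gaussian $s^{-\b+1/2}e^{-s^2/2}$, which $\calH^\circ_{-\b}$ fixes but whose $\calH^\circ_\b$-image acquires a power tail $\sim\tau^{\b-3/2}$ at infinity, forcing divergence of the $\bbG$-integral at $\tau=\infty$; and to separate $\bbG$ from $\mathbb{G}^\circ_{-\a,-\b}$ for $0<\a<1$, $0<\b<1$ I would take a function skewed in both variables, which lies in $\D\,\mathbb{G}^\circ_{-\a,-\b}$ yet already violates the $r$-summability for $\bbG$.

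The hard part, and the reason Proposition~\ref{pro:sec} is invoked, is the sharp asymptotic analysis of these cross-transform quantities: the $n\to\infty$ behaviour of the cross-Laguerre coefficients $\langle r^{-\a+1/2}e^{-r^2/2},\ell^{\a,\circ}_{n,\tau}\rangle$, and the large-$\tau$ behaviour of $\calH^\circ_\b$ applied to $s^{-\b+1/2}e^{-s^2/2}$. The conceptual content behind the computation is uniform across the cases: mismatching the boundary exponent (putting $r^{-\a+1/2}$ against the $r^{\a+1/2}$ selected by the $\a$-extension, and likewise in $s$) destroys the rapid geometric/gaussian decay and replaces it by slow polynomial decay in $n$ or a heavy power tail in $\tau$, which is exactly what expels the separating function from the competing domain and thereby shows the two self-adjoint extensions are distinct.
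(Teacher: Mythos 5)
Your proof of the extension claim is essentially the paper's: the paper likewise observes that $G^\circ_{\a,\b}$ on $C^\infty_c(\R^2_+)$ is unchanged under $\a\mapsto-\a$ and/or $\b\mapsto-\b$, notes that $\a<1$, $\b<1$ are exactly what make $\calG^\circ_{-\a,\b}$ etc.\ defined, and verifies $\bbG\vp=G^\circ_{\a,\b}\vp$ via the eigenfunction identity \eqref{3.2} (integration by parts) together with inversion; your version of this is the same argument, slightly streamlined.

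For the distinctness claim you take a genuinely different route. The paper never exhibits separating functions: it first derives the closed form of the heat kernel (Theorem~\ref{thm:heat}) and then, in Proposition~\ref{pro:sec}, shows that the on-diagonal kernels for the parameter pairs $(\a,\b)$, $(-\a,\b)$, $(\a,-\b)$, $(-\a,-\b)$ have different power behavior as $s\to0^+$ or $r\to0^+$, driven by $J_{\pm\b}(y)\simeq y^{\pm\b}$ and $I_{\pm\a}(y)\simeq y^{\pm\a}$; equal self-adjoint operators would have equal semigroups and hence equal kernels, so the operators differ. (Thus your guess about what Proposition~\ref{pro:sec} contains is off --- it concerns heat kernels, not cross-transform asymptotics --- but your argument does not need it.) Your alternative, separating the \emph{domains} by mismatched skewed gaussians, is legitimate: two self-adjoint extensions of one symmetric operator are both restrictions of its adjoint, so they differ iff their domains do (the paper records this only later, after Proposition~\ref{pro:minmax}). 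I checked your two key asymptotic claims and both are correct: writing the cross coefficient $\langle r^{-\a+1/2}e^{-r^2/2},\ell^{\a,\circ}_{n,\tau}\rangle$ as a Laplace transform of $L^\a_n$ and using the generating function gives, for fixed $\tau\in(0,1)$, a positive constant times $c_{n,\a}\sum_{k=0}^n\binom{k+\a-1}{k}\big(\tfrac{1-\tau}{1+\tau}\big)^{n-k}\simeq n^{\a/2-1}$, so the domain series for $\bbG$ diverges like $\sum_n n^{\a}$; and $\calH^\circ_\b\big[s^{-\b+1/2}e^{-s^2/2}\big](\tau)\simeq\tau^{\b-3/2}$ as $\tau\to\infty$ (Kummer asymptotics), nonzero because $\b\neq0$. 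Two caveats for a full write-up: these asymptotics, with positivity of the limit constants and enough uniformity in $\tau$ to apply Tonelli, are the real work and must be proved rather than asserted; and in the $(\a,-\b)$ comparison the divergence is not always ``at $\tau=\infty$ for fixed $n$'' --- when $2\b-1<\a$ each fixed-$n$ integral converges and the divergence only emerges after summing in $n$ (the terms grow like $n^{2\b+1}$). Comparing what each approach buys: the paper's argument reuses machinery it develops anyway and needs only classical Bessel asymptotics, while yours is independent of the heat kernel entirely and is more constructive, producing explicit functions witnessing that the domains, hence the extensions, differ.
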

\begin{proof} Fix $\a,\b>-1$. With the aid of \eqref{3.2} it follows (notice that $G^\circ_{\a,\b}$ maps $C^\infty_c\big(\R^2_+\big)$ into itself) that  
$C^\infty_c\big(\R^2_+\big)\subset \D\,\bbG$. For the main claim of the theorem it remains to show that 
\begin{equation}\label{4.1}
\bbG \vp=G^{\circ}_{\a,\b}\vp, \qquad \vp\in C^\infty_c\big(\R^2_+\big),
\end{equation}
because, as operators on $C^\infty_c\big(\R^2_+\big)$, $G^\circ_{\a,\b}$ remain unchanged when $\a$ is replaced by $-\a$, and/or $\b$ is replaced by $-\b$.
Now, to check \eqref{4.1} it suffices to verify that 
\begin{equation*}
\mathbb{M}_\a(\calG^{\circ}_{\a,\b} \vp)=\calG^{\circ}_{\a,\b}(G^{\circ}_{\a,\b}\vp), \qquad \vp\in C^\infty_c\big(\R^2_+\big),
\end{equation*}
which means checking that for every $n\in\N$ and $\tau>0$, we have
\begin{equation*}
\Theta_\a(n,\tau)\calG^{\circ}_{\a,\b}\vp(n,\tau)=\calka\calka G^{\circ}_{\a,\b}\vp(r,s)\Psi^\circ_{n,\tau}(r,s)\,dr\,ds.
\end{equation*}
But this holds because
\begin{align*}
\Theta_\a(n,\tau)\calka\calka \vp(r,s)\Psi^\circ_{n,\tau}(r,s)\,dr\,ds&=\calka\calka \vp(r,s)G^{\circ}_{\a,\b}\Psi^\circ_{n,\tau}(r,s)\,dr\,ds\\
&=\calka\calka G^{\circ}_{\a,\b}\vp(r,s)\Psi^\circ_{n,\tau}(r,s)\,dr\,ds, 
\end{align*}
where in the last step an integration by parts was used. Thus  \eqref{4.1} follows. 

The differences between the pairs of operators described in the last sentence  of the theorem are consequences of differences between the corresponding heat kernels; see Proposition~\ref{pro:sec}.
\end{proof}

The functional calculus for the self-adjoint operator $\bbG$, $\a,\b>-1$, will be the realization of the functional calculus for 
$\bbM^\circ_\a$ mapped by $\calG_{\a,\b}^{\circ}$ onto $L^2\big(\R^2_+\big)$. According to the remarks following Lemma~\ref{lem:spe}, in the particular 
case of $L^2(\Gamma^\circ)$ and $\bbM^\circ_\a$ as the multiplication operator by $\Theta_\a$, given a Borel function $\Phi\colon [0,\infty)\to \C$ we set
$\Phi(\bbM^\circ_{\Theta_\a})=M^\circ_{\Phi\circ\Theta_\a}$.

Consequently, the functional calculus for the self-adjoint operator $\bbG$ is the mapping 
$$
\Phi\to \Phi(\bbG):=\calG_{\a,\b}^{\circ;-1}\circ M^\circ_{\Phi\circ\Theta_\a}\circ\calG_{\a,\b}^{\circ},
$$
so that the domain of $\Phi(\bbG)$ is
$$
\D\,\Phi(\bbG):=\calG_{\a,\b}^{\circ;-1}(\D\,M^\circ_{\Phi\circ\Theta_\a } )  =\{f\in L^2\big(\R^2_+\big)\colon \calG_{\a,\b}^{\circ}f\in \D\,M^\circ_{\Phi\circ\Theta_\a }\}.
$$
Obviously, $\Phi(\bbG)$ inherits all the spectral properties of $\Phi(\bbM^\circ_{\Theta_\a})$. 

We now apply the above comments to the bounded functions $\Phi_t(y)=e^{-ty}$, $y\ge0$, where $t>0$ is a parameter. The corresponding family 
$\{\exp(-t \bbG)\}_{t>0}$ is the one-parameter semigroup of operators bounded on $L^2\big(\R^2_+\big)$, called the heat semigroup for $\bbG$. We prove that these operators are integral operators; the family of the corresponding kernels $\{K^\circ_{t;\a,\b}\}_{t>0}$ is then called the heat kernel for $\bbG$.

In what follows $I_\a$ stands for the modified Bessel function of the first kind of order $\a$, see \cite[Sec. 5.7]{Leb}. The statement and the proof 
of Proposition~\ref{pro:add}, which is an auxiliary technical result used in the proof of the following theorem, is postponed until the end of this section.
\begin{theorem} \label{thm:heat}
Let $\a,\b>-1$. For every $t>0$, $\exp(-t \bbG)$ is an integral operator with kernel 
$$
K^\circ_{t;\a,\b}\big((r,s),(u,v)\big)=\sqrt{rusv}\calka J_\b(\tau s)J_\b(\tau v)\exp\Big(-\frac12\frac{\tau(r^2+u^2)}{\tanh 2t\tau}\Big)I_\a\Big(\frac{\tau ru}{\sinh 2t\tau}\Big)\frac{\tau^2}{\sinh 2t\tau}\,d\tau,
$$
that is, for $f\in L^2(\R^2_+)$,
\begin{equation}\label{4.2}
\exp(-t \bbG)f(r,s)=\calka\calka K^\circ_{t;\a,\b}\big((r,s),(u,v)\big)f(u,v)\,du\,dv.
\end{equation}
\end{theorem}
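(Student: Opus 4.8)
The plan is to start from the functional calculus for $\bbG$ established above: taking $\Phi_t(y)=e^{-ty}$ we have
$$
\exp(-t\bbG)=\calG_{\a,\b}^{\circ;-1}\circ M^\circ_{\Phi_t\circ\Theta_\a}\circ\calG_{\a,\b}^{\circ},
$$
where $M^\circ_{\Phi_t\circ\Theta_\a}$ is multiplication by $(n,\tau)\mapsto e^{-t\lambda^{\a,\circ}_n\tau}$ on $L^2(\Gamma^\circ)$. Since this multiplier is bounded, $\exp(-t\bbG)$ is bounded, and the task is to exhibit its integral kernel. First I would write the three maps explicitly on $f\in L^2\big(\R^2_+\big)$, using \eqref{3.4a} for $\calG^\circ_{\a,\b}$ and \eqref{3.6} for its inverse. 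Inserting the multiplier between them gives, at least formally,
$$
\exp(-t\bbG)f(r,s)=\calH_\b^\circ\Big[\tau\mapsto\sum_{n=0}^\infty e^{-t\lambda^{\a,\circ}_n\tau}\,\calG^\circ_{\a,\b}f(n,\tau)\,\ell^{\a,\circ}_{n,\tau}(r)\Big](s),
$$
and substituting $\calG^\circ_{\a,\b}f(n,\tau)=\calka\calka f(u,v)\ell^{\a,\circ}_{n,\tau}(u)\tilde J^\circ_{\b,\tau}(v)\,du\,dv$ isolates the candidate kernel.

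The heart of the computation is the scaled Laguerre Mehler (Hille--Hardy) kernel
$$
\mathcal M_{t,\tau}(r,u):=\sum_{n=0}^\infty e^{-t\lambda^{\a,\circ}_n\tau}\ell^{\a,\circ}_{n,\tau}(r)\ell^{\a,\circ}_{n,\tau}(u),
$$
which is exactly the quantity I expect Proposition~\ref{pro:add} to handle. Using $\ell^{\a,\circ}_{n,\tau}(r)=\tau^{1/4}\ell^{\a,\circ}_n(\sqrt\tau r)$ and $\lambda^{\a,\circ}_n=2(2n+\a+1)$, the sum becomes a Hille--Hardy series in $w=e^{-4t\tau}$; applying the closed form
$$
\sum_{n=0}^\infty\frac{n!}{\Gamma(n+\a+1)}L^\a_n(x)L^\a_n(y)w^n=\frac{(xyw)^{-\a/2}}{1-w}\exp\Big(-\frac{(x+y)w}{1-w}\Big)I_\a\Big(\frac{2\sqrt{xyw}}{1-w}\Big)
$$
with $x=\tau r^2$, $y=\tau u^2$, and simplifying by means of $\tfrac{1+w}{1-w}=\coth 2t\tau$ and $\tfrac{2\sqrt w}{1-w}=1/\sinh 2t\tau$, I would obtain
$$
\mathcal M_{t,\tau}(r,u)=\sqrt{ru}\,\frac{\tau}{\sinh 2t\tau}\exp\Big(-\tfrac12\tfrac{\tau(r^2+u^2)}{\tanh 2t\tau}\Big)I_\a\Big(\tfrac{\tau ru}{\sinh 2t\tau}\Big).
$$
Feeding this into the formal expression, recalling that the Hankel kernel in the pair $(\tau,s)$ is $\tilde J^\circ_{\b,\tau}(s)=(\tau s)^{1/2}J_\b(\tau s)$, and collecting the two Bessel factors together with the $\sqrt{sv}$ weights, the remaining $\tau$-integral produces precisely the stated $K^\circ_{t;\a,\b}$.

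What remains, and where the real work lies, is to justify the interchange of the $n$-sum with the two integrations and to prove that the resulting $K^\circ_{t;\a,\b}$ genuinely represents $\exp(-t\bbG)$ as in \eqref{4.2}. The multiplier $e^{-t\lambda^{\a,\circ}_n\tau}$ decays geometrically in $n$ for each fixed $\tau>0$, which lets one sum the Hille--Hardy series and, via the small-argument and asymptotic bounds on $I_\a$, control $\mathcal M_{t,\tau}$; this is the analytic content I would relegate to Proposition~\ref{pro:add}. To pass from the formal identity to \eqref{4.2} rigorously, I would first verify it on the dense class of functions with separated variables $f(r,s)=f_1(r)f_2(s)$, where \eqref{3.5} makes $\calG^\circ_{\a,\b}f$ factor and the $n$-sum decouples cleanly, checking absolute convergence and applying Tonelli and Fubini there, and then extend to all of $L^2\big(\R^2_+\big)$ using the boundedness of $\exp(-t\bbG)$ together with a Schur- or Cauchy--Schwarz-type estimate showing that integration against $K^\circ_{t;\a,\b}$ defines a bounded operator. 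The principal obstacle is thus not the algebra but the analytic control of the $\tau$-integral near $\tau=0$ and $\tau=\infty$ and the uniform integrability needed to exchange limits; the factor $\tau^2/\sinh 2t\tau$ together with the Gaussian-type decay in $r^2+u^2$ are what I expect ultimately make these estimates work.
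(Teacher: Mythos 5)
Your proposal is correct and follows essentially the same route as the paper's proof: the functional-calculus representation \eqref{4.5}, summation of the scaled Laguerre series via the Hille--Hardy formula (yielding exactly the Mehler kernel and hence the stated $K^\circ_{t;\a,\b}$), verification on a dense class with Fubini/Tonelli justifications, and extension to all of $L^2(\R^2_+)$ by density together with Cauchy--Schwarz. The only minor discrepancies are that the paper verifies \eqref{4.2} on all of $C^\infty_c(\R^2_+)$ rather than on separated products, and that Proposition~\ref{pro:add} does not control the Mehler sum itself but establishes $K^\circ_{t;\a,\b}\big((r,s),(\cdot,\cdot)\big)\in L^2(\R^2_+)$, which via Cauchy--Schwarz gives the pointwise convergence $T^\circ f_n(r,s)\to T^\circ f(r,s)$ needed in the limiting step---no Schur-type boundedness of the integral operator is required.
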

\begin{proof}
Firstly, we notice that an easy computation with help of the asymptotics
\begin{equation}\label{4.3}
J_\b(y)\simeq y^\b \qquad {\rm and}\qquad  I_\a(y)\simeq y^\a, \qquad y\to 0^+,
\end{equation}
the bounds
\begin{equation}\label{4.4}
J_\b(y)=O(y^{-1/2})  \qquad {\rm and}\qquad  I_\a(y)=O(e^y y^{-1/2}), \qquad y\to \infty,
\end{equation}
and the limits $\lim_{y\to 0^+}\frac y{\sinh y}=\lim_{y\to 0^+}\frac y{\tanh y}=\lim_{y\to\infty}\tanh y=1$, shows that  
the integral representing $K^\circ_{t;\a,\b}\big((r,s),(u,v)\big)$ indeed converges for every $(r,s),(u,v)\in\R^2_+$. 
Secondly, notice that by Proposition~\ref{pro:add} and Schwarz` inequality, the integral in \eqref{4.2} converges for every $(r,s)\in\R^2_+$.

Using the comments preceding the statement of Theorem~\ref{thm:heat}, we obtain for $f\in L^2(\R^2_+)$, 
\begin{equation}\label{4.5} 
\exp(-t \bbG)f(r,s)=\calH^\circ_\b\Big[\sum_{n=0}^\infty e^{-t\tau\lambda^{\a,\circ}_{n}}\calG_{\a,\b}^{\circ}f(n,\tau)\ell^{\a,\circ}_{n,\tau}(r)\Big](s). 
\end{equation}
Note that by \eqref{3.7} $\calG_{\a,\b}^{\circ}f(n,\tau)\in L^2(\Gamma^\circ)$, hence
for almost every $r>0$, the function of $\tau$ inside the square brackets in \eqref{4.5}
is in $L^2(0,\infty)$ so that $\calH^\circ_\b$ can be applied.

From now on we temporarily assume that $f\in C^\infty_c(\R^2_+)$. For such $f$ the sum in  \eqref{4.5} becomes 
\begin{align*}
&\,\,\,\,\sum_{n=0}^\infty e^{-t\tau\lambda^{\a,\circ}_{n}}\calG_{\a,\b}^{\circ}f(n,\tau)\ell^{\a,\circ}_{n,\tau}(r)\\
&=\calka\calka f(u,v)\tilde{J}^\circ_{\b,\tau}(v)\Big(\sum_{n=0}^\infty e^{-t\tau\lambda^{\a,\circ}_{n}}\ell^{\a,\circ}_{n,\tau}(u)\ell^{\a,\circ}_{n,\tau}(r)  \Big)\,du\,dv\\
&=\calka\calka f(u,v)\tilde{J}^\circ_{\b,\tau}(v)\Big(\sum_{n=0}^\infty e^{-2t\tau(2n+\a+1)}\tau^{1/4}\ell^{\a,\circ}_{n}(\sqrt{\tau}u)\tau^{1/4}\ell^{\a,\circ}_{n}(\sqrt{\tau}r)\Big)\,du\,dv\\
&=\tau^{1/2} e^{-2t\tau(\a+1)}\calka\calka f(u,v)\tilde{J}^\circ_{\b,\tau}(v)\Big(\sum_{n=0}^\infty e^{-4t\tau n} \ell^{\a,\circ}_{n}(\sqrt{\tau}u)\ell^{\a,\circ}_{n}(\sqrt{\tau}r)\Big)\,du\,dv.\\
\end{align*}
Notice that exchanging summation with integration was possible since, assuming that ${\rm supp}\,f\subset[a,b]\times[a,b]$, $0<a<b<\infty$, and 
for $t,\tau,r>0$ fixed, we have
$$
\sum_{n=0}^\infty e^{-4t\tau n} \int_a^b \int_a^b |\ell^{\a,\circ}_{n}(\sqrt{\tau}u)\ell^{\a,\circ}_{n}(\sqrt{\tau}r)|\,du\,dv
\le \frac{b-a}{\sqrt{\tau}}\sum_{n=0}^\infty e^{-4t\tau n} |\ell^{\a,\circ}_{n}(\sqrt{\tau}r)|\|\ell^{\a,\circ}_{n}\|_{L^1(0,\infty)}
<\infty,
$$
and this is enough for our purposes; we used the fact that the norms $\|\ell^{\a,\circ}_{n}\|_{L^1(0,\infty)}$ and $\|\ell^{\a,\circ}_{n}\|_{L^\infty(0,\infty)}$ grow at most polynomially in $n\to\infty$; see e.g. \cite[(4.1)]{St0}.

The closed form of the sum in \eqref{4.5} is now obtained by the formula (see \cite[4.17.6]{Leb}),
$$
\sum_{n=0}^\infty \frac{\Gamma(n+1)}{\Gamma(n+\a+1)}L^\a_n(x)L^\a_n(y)w^n=\frac1{1-w} \exp\big(-\frac w{1-w}(x+y)\big)\frac1{(xyw)^{\a/2}}
I_\a\Big(\frac{2w^{1/2}}{1-w}(xy)^{1/2}\Big),
$$
valid for $|w|<1$, $\a>-1$.  Namely,
$$
\sum_{n=0}^\infty e^{-4t\tau n} \ell^{\a,\circ}_{n}(\sqrt{\tau}u)\ell^{\a,\circ}_{n}(\sqrt{\tau}r)= 
\frac{e^{2t\tau(\a+1)}}{\sinh 2t\tau} \exp\Big(-\frac12\frac{\tau(r^2+u^2)}{\tanh 2t\tau}\Big)(\tau ru)^{1/2}I_\a\Big(\frac{\tau ru}{\sinh 2t\tau}\Big),
$$
and consequently, 
\begin{align*}
&\sum_{n=0}^\infty e^{-t\tau\lambda^{\a,\circ}_{n}}\calG_{\a,\b}^{\circ}f(n,\tau)\ell^{\a,\circ}_{n,\tau}(r)\\
&=\frac{\tau^{1/2}}{\sinh 2t\tau}\calka\calka f(u,v)\tilde{J}^\circ_{\b,\tau}(v)\exp\Big(-\frac12\frac{\tau(r^2+u^2)}{\tanh 2t\tau}\Big)(\tau ru)^{1/2}
I_\a\Big(\frac{\tau ru}{\sinh 2t\tau}\Big)\,du\,dv.
\end{align*}
Treating the above expression as a function of $\tau$ and evaluating the Hankel transform $\calH^\circ_\b$ of it at $s$ finally shows that $\exp(-t \bbG)f(r,s)$ 
equals
\begin{align*}
&\calka\tilde{J}^\circ_{\b,\tau}(s) \frac{\tau^{1/2}}{\sinh 2t\tau}\calka\calka f(u,v)\tilde{J}^\circ_{\b,\tau}(v)
\exp\Big(-\frac12\frac{\tau(r^2+u^2)}{\tanh 2t\tau}\Big) \sqrt{\tau ur}I_\a\Big(\frac{\tau ru}{\sinh 2t\tau}\Big)\,du\,dv\,d\tau\\
&\,\,\,=\calka\calka K^\circ_{t;\a,\b}\big((r,s),(u,v)\big)f(u,v)\,du\,dv,
\end{align*} 
which proves \eqref{4.2} for $f\in C^\infty_c(\R^2_+)$ with $K^\circ_{t;\a,\b}$ as in the statement of Theorem~\ref{thm:heat}. 
Notice that exchanging the order of integration was possible since, assuming again  $f$ to be supported in $[a,b]\times[a,b]$, and considering 
for simplicity of notation the case $t=1/2$ only (the proof in the general case $t>0$ goes with minor changes), for $r,s>0$ fixed, we have
\begin{align*}
&\calka |J_\b(\tau s)| \frac{\tau^{2}}{\sinh\tau}\int_a^b \int_a^b (uv)^{1/2}|J_\b(\tau v)| \exp\Big(-\frac12\frac{\tau(r^2+u^2)}{\tanh\tau}\Big)
I_\a\Big(\frac{\tau ru}{\sinh\tau}\Big)\,du\,dv\,d\tau\\
&\lesssim \calka \tau|J_\b(\tau s)| \Big(\frac{\tau}{\sinh\tau}\Big)^{1+\a}\int_a^b  |J_\b(\tau v)|  \,dv\,d\tau<\infty,
\end{align*}
and this is enough for our purpose. We used the fact that $\frac{\tau}{\sinh\tau}<1$ for $\tau\in(0,\infty)$, then we applied 
\eqref{4.3} to bound $I_\a$ and cancel integration with respect to the $u$ variable. The concluding inequality easily follows by using \eqref{4.3} 
and \eqref{4.4} to bound $J_\b$.

For general $f\in L^2(\R^2_+)$ we now argue as follows. Let $\{f_n\}\subset C^\infty_c(\R^2_+)$ be such that $f_n\to f$ in $L^2(\R^2_+)$, hence also 
$\exp(-t \bbG)f_n\to \exp(-t \bbG)f$ in $L^2(\R^2_+)$. With $t>0$, $\a,\b$ fixed, define
$$
T^\circ g(r,s):=\calka\calka K^\circ_{t;\a,\b} \big((r,s),(u,v)\big)g(u,v)\,du\,dv, \qquad g\in L^2(\R^2_+), \quad (r,s)\in\R^2_+.
$$
By Proposition~\ref{pro:add} this definition is correct. By what we have already proved, combined with Proposition~\ref{pro:add} and Schwarz` inequality we obtain
$$
\exp(-t \bbG)f_n(r,s)=T^\circ f_n(r,s)\to T^\circ f(r,s), \qquad n\to\infty,
$$
for every $(r,s)\in\R^2_+$. It follows that $\exp(-t \bbG)f(r,s)=T^\circ f(r,s)$, $(r,s)$-a.e.
\end{proof}

Observe that $K^\circ_{t;\a,\b}\big((r,s),(u,v)\big)$ is continuous jointly in $t>0$, and $(r,s),(u,v)\in\R^2_+$ (this may be verified by elementary means),  symmetric in $(r,s)$ and $(u,v)$. Moreover, for $t>0$,
\begin{equation}\label{4.6} 
K^\circ_{t;\a,\b} \big((r,s),(u,v)\big)=t^{-3/2}K^\circ_{1;\a,\b} \Big(\big(\frac r{\sqrt t},\frac st\big),\big(\frac u{\sqrt t},\frac vt\big)\Big),
\quad (r,s),(u,v)\in\R^2_+.
\end{equation}
In addition, using
$$
J_{\pm 1/2}(y)=\Big(\frac2{\pi y}\Big)^{1/2}\left\{\begin{matrix}{\sin}\\ {\cos}\end{matrix}\right\}(y)\quad {\rm and}\quad
I_{\pm 1/2}(y)=\Big(\frac2{\pi y}\Big)^{1/2}\left\{\begin{matrix}{\sinh}\\ {\cosh}\end{matrix}\right\}(y),
$$
one can express $K^\circ_{t;\a,\b}\big((r,s),(u,v)\big)$ for $\a,\b\in\{-\frac12,\frac12\}$ in a simpler form. For instance, for $\a=\b=-\frac12$, 
\begin{align*}
&\,\,\,\,\,\,\,\,\,\,\,\,\,\,\,\,\,\,\,\,\,\,\,\,\,\,\,\,K^\circ_{t;-\frac12,-\frac12}\big((r,s),(u,v)\big)\\
&=\Big(\frac2{\pi}\Big)^{3/2}\calka \cos\tau s \cos\tau v
\exp\Big(-\frac12\frac{\tau(r^2+u^2)}{\tanh 2t\tau}\Big)
\sinh\Big(\frac{\tau ru}{\sinh 2t\tau}\Big)\Big(\frac{\tau}{\sinh 2t\tau}\Big)^{1/2}\,d\tau.
\end{align*}
Cleary, it would be interesting to know if $K^\circ_{t;\,\a,\b}$ is (\textit{strictly}) positive on $\R^2_+$ for arbitrary $\a,\b>-1$.

\begin{proposition} \label{pro:sec}
The heat kernels $\{K^\circ_{t;\a,\b}\}_{t>0}$ differ from each other for each of the following two pairs of type parameters taken from the set 
$(-1,1)\times(-1,1)$: i) $(\a,\b)$ and $(\a,-\b)$, $\b\neq0$; ii) $(\a,\b)$ and $(-\a,\b)$, $\a\neq0$; iii) $(\a,\b)$ and  $(-\a,-\b)$, $\a\neq0$ and $\b\neq0$.
\end{proposition}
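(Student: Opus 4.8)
My plan is to restrict the kernel to the diagonal $r=u$, $s=v$ and read off the exact order of vanishing as one of the two variables tends to $0$; this order turns out to be $1+2\a$ (in the first variable) and $1+2\b$ (in the second), so it changes under $\a\mapsto-\a$ or $\b\mapsto-\b$ precisely when $\a\neq0$ or $\b\neq0$. Concretely, for fixed $t>0$ I set
$$
\kappa_{\a,\b}(\rho,\sigma):=K^\circ_{t;\a,\b}\big((\rho,\sigma),(\rho,\sigma)\big)=\rho\sigma\calka J_\b(\tau\sigma)^2\exp\Big(-\frac{\tau\rho^2}{\tanh 2t\tau}\Big)I_\a\Big(\frac{\tau\rho^2}{\sinh 2t\tau}\Big)\frac{\tau^2}{\sinh 2t\tau}\,d\tau.
$$
The point of the diagonal is that the factor $J_\b(\tau s)J_\b(\tau v)$ becomes the square $J_\b(\tau\sigma)^2\ge0$, so that all limiting $\tau$-integrals below have strictly positive integrands.

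First I would fix $\sigma>0$ and let $\rho\to0^+$. Since the argument $\tau\rho^2/\sinh 2t\tau$ stays bounded, \eqref{4.3} gives $I_\a(\tau\rho^2/\sinh 2t\tau)\sim (2^\a\Gamma(\a+1))^{-1}(\tau\rho^2/\sinh 2t\tau)^\a$, whence
$$
\lim_{\rho\to0^+}\rho^{-(1+2\a)}\kappa_{\a,\b}(\rho,\sigma)=D_\a(\sigma):=\frac{\sigma}{2^\a\Gamma(\a+1)}\calka J_\b(\tau\sigma)^2\Big(\frac{\tau}{\sinh 2t\tau}\Big)^\a\frac{\tau^2}{\sinh 2t\tau}\,d\tau>0.
$$
Symmetrically, fixing $\rho>0$ and letting $\sigma\to0^+$, the asymptotic $J_\b(\tau\sigma)^2\sim(4^\b\Gamma(\b+1)^2)^{-1}(\tau\sigma)^{2\b}$ yields
$$
\lim_{\sigma\to0^+}\sigma^{-(1+2\b)}\kappa_{\a,\b}(\rho,\sigma)=E_\b(\rho):=\frac{\rho}{4^\b\Gamma(\b+1)^2}\calka \tau^{2\b}\exp\Big(-\frac{\tau\rho^2}{\tanh 2t\tau}\Big)I_\a\Big(\frac{\tau\rho^2}{\sinh 2t\tau}\Big)\frac{\tau^2}{\sinh 2t\tau}\,d\tau>0,
$$
both limits being finite and positive since the integrands are positive and integrable by \eqref{4.3}--\eqref{4.4}. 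The proposition then follows by contradiction: if $K^\circ_{t;\a,\b}=K^\circ_{t;-\a,\b}$ identically, then dividing by $\rho^{1+2\a}$ and letting $\rho\to0^+$ at a fixed $\sigma$, the left side tends to $D_\a(\sigma)>0$ while the right side equals $\rho^{-4\a}\cdot\rho^{-(1-2\a)}\kappa_{-\a,\b}(\rho,\sigma)$, which tends to $0$ or $+\infty$ according as $\a>0$ or $\a<0$ --- impossible when $\a\neq0$. This settles ii); case i) is identical with the roles of $\rho,\sigma$ and of $D_\a,E_\b$ interchanged, and in iii) it suffices to compare the $\rho$-orders $1+2\a\neq1-2\a$ (using $\a\neq0$). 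By \eqref{4.6} the conclusion for one $t$ gives it for all $t>0$.

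The only real work is the two dominated-convergence passages. For $\rho\to0^+$ there is essentially nothing to do: since $\tau\rho^2/\sinh 2t\tau\le\rho^2/(2t)$ is bounded for $\rho\le1$, \eqref{4.3} yields $I_\a(\tau\rho^2/\sinh 2t\tau)\le C(\tau\rho^2/\sinh 2t\tau)^\a$ uniformly, so $\rho^{-2\a}I_\a(\cdots)\le C(\tau/\sinh 2t\tau)^\a$ and the majorant $C\sigma J_\b(\tau\sigma)^2(\tau/\sinh 2t\tau)^\a\,\tau^2/\sinh 2t\tau$ is $\tau$-integrable by \eqref{4.3}--\eqref{4.4}. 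The $\sigma\to0^+$ limit is the delicate one, because the argument $\tau\sigma$ of $J_\b$ is unbounded; here I expect to use the uniform estimate $\sigma^{-2\b}J_\b(\tau\sigma)^2=\tau^{2\b}\,(\tau\sigma)^{-2\b}J_\b(\tau\sigma)^2\lesssim \tau^{2\b}(1+\tau)^{\max(0,-1-2\b)}$, valid for all $\sigma\in(0,1]$, which follows by combining $|J_\b(y)|\lesssim y^\b$ near $0$ from \eqref{4.3} with $|J_\b(y)|\lesssim y^{-1/2}$ at infinity from \eqref{4.4}; set against the super-exponential decay of $\exp(-\tau\rho^2/\tanh 2t\tau)I_\a(\tau\rho^2/\sinh 2t\tau)\,\tau^2/\sinh 2t\tau$ at a fixed $\rho$, this produces an integrable majorant and legitimizes the passage to the limit.
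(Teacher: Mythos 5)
Your proposal is correct and follows essentially the same route as the paper: restrict the kernel to the diagonal and exploit the small-argument asymptotics \eqref{4.3} of $I_{\pm\a}$ and $J_{\pm\b}$ to show that the two kernels have different power-type behavior as one variable tends to $0^+$ (the paper phrases this via matching upper bounds $\lesssim s^{2\b}$, resp. $r^{2\a}$, and lower bounds $\gtrsim s^{-2\b}$, resp. $r^{-2\a}$, with the other variable fixed at $1$, rather than via exact limits with dominated convergence). The only blemish is the reversed dichotomy at the end of your contradiction argument --- for $\a>0$ the right-hand side tends to $+\infty$ and for $\a<0$ to $0$, not the other way around --- which is immaterial, since either alternative contradicts the finite positive limit $D_\a(\sigma)$.
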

\begin{proof} It suffices to show the difference for particular $t>0$ in each of the three considered cases; we choose $t=1/2$ for each of these cases. 
Then, on the diagonal, when $u=r$ and $v=s$, we obtain
$$
K^\circ_{1/2;\,\a,\b}\big((r,s),(r,s)\big)=rs\calka J_{\b}(\tau s)^2\exp\Big(-\frac{\tau r^2}{\tanh \tau}\Big)
I_\a\Big(\frac{\tau r^2}{\sinh \tau}\Big)\frac{\tau^2}{\sinh \tau}\,d\tau,
$$

In case i),  setting $r=1$ in the above integral and denoting
$$
F_{\a,\b}^{(1)}(s)= \calka J_{\b}(\tau s)^2\exp\Big(-\frac{\tau}{\tanh \tau}\Big)
I_\a\Big(\frac{\tau}{\sinh \tau}\Big)\frac{\tau^2}{\sinh \tau}\,d\tau, \qquad s>0, 
$$
we check that the functions $F_{\a,\b}^{(1)}$ and $F_{\a,-\b}^{(1)}$  differ. More precisely, we check that  $F_{\a,\pm\b}^{(1)}(s)$ have different 
behavior when $s\to0^+$. This will be a consequence of different asymptotics of $J_{\pm \b}$ at zero, see \eqref{4.3} (\eqref{4.3} and \eqref{4.4} 
are frequently used in this proof and we shall not recall them).   
First, we verify that 
$$ 
T(A):= \int_A^\infty \exp\Big(-\frac{\tau}{\tanh \tau}\Big)I_\a\Big(\frac{\tau}{\sinh \tau}\Big)\frac{\tau}{\sinh \tau}\,d\tau
$$
decays exponentially when $A\to\infty$. Indeed, for $\tau$ large, 
$$
I_\a\Big(\frac{\tau}{\sinh \tau}\Big)\lesssim \Big(\frac{\tau}{\sinh \tau}\Big)^\a\qquad {\rm and}\qquad \exp\Big(-\frac{\tau}{\tanh \tau}\Big)\le\exp(-\tau),
$$
hence
$$
T(A)\lesssim \int_A^\infty \exp(-\tau)\Big(\frac{\tau}{\sinh \tau}\Big)^{\a+1}\,d\tau\lesssim \int_A^\infty \exp(-\tau)\,d\tau\lesssim \exp(-A).
$$

Now, to fix the attention, assume $\b>0$ and estimate $F_{\a,\b}^{(1)}(s)$ on $0<s<1$ from above using $J_{\b}(\tau s)=O\big((\tau s)^\b\big)$ 
for $\tau<\frac 1s$, and  $J_{\b}(\tau s)=O((\tau s)^{-1/2})$ for $\tau\ge\frac 1s$,
\begin{align*}
F_{\a,\b}^{(1)}(s)&\lesssim s^{2\b}\int_0^{1/s} \frac{\tau^{2+2\b}}{\sinh \tau}\exp\Big(-\frac{\tau}{\tanh \tau}\Big)
I_\a\Big(\frac{\tau}{\sinh \tau}\Big) \,d\tau\\
&\,\,\,\,\,\,\,\, +\frac1s\int_{1/s}^\infty \exp\Big(-\frac{\tau}{\tanh \tau}\Big)I_\a\Big(\frac{\tau}{\sinh \tau}\Big) \frac{\tau}{\sinh \tau} \,d\tau\\
&\lesssim s^{2\b}+\frac1s\exp(-1/s)\lesssim s^{2\b}.
\end{align*}
On the other hand, using $J_{-\b}(\tau s)\simeq(\tau s)^{-\b}$ for $\tau<\frac 1s$, we estimate $F_{\a,-\b}^{(1)}(s)$ on $0<s<1$ from below, 
\begin{align*}
F_{\a,-\b}^{(1)}(s)&\gtrsim s^{-2\b}\int_0^{1/s}\frac{\tau^{2-\b}}{\sinh \tau}\exp\Big(-\frac{\tau}{\tanh \tau}\Big)
I_\a\Big(\frac{\tau}{\sinh \tau}\Big)\,d\tau\gtrsim s^{-2\b},
\end{align*}
simply by limiting the integration to the interval $(0,1)$.

In case ii), setting $s=1$ in the relevant integral, and denoting 
$$ 
F_{\a,\b}^{(2)}(r)= 
\calka J_{\b}(\tau)^2\exp\Big(-\frac{\tau r^2}{\tanh \tau}\Big)I_{\a}\Big(\frac{\tau r^2}{\sinh \tau}\Big)\frac{\tau^2}{\sinh \tau}\,d\tau, \qquad r>0,
$$ 
we verify that the functions $F_{\a,\b}^{(2)}$ and $F_{-\a,\b}^{(2)}$ differ by checking, similarly as in case i), that $F_{\pm\a,\b}^{(2)}(r)$ behave 
 differently when $r\to0^+$. This will be a consequence of different asymptotics of $I_{\pm \a}$ at zero. 

To fix the attention assume that $\a>0$. For $0<r<1$ and $0<\tau<\infty$ we have  $0<\frac{\tau r^2}{\sinh \tau}<1$. 
Hence, estimating first $I_\a$, and then $F_{\a,\b}^{(2)}(r)$ on $0<r<1$ from above gives
$$
F_{\a,\b}^{(2)}(r)\lesssim \int_0^\infty J_{\b}(\tau)^2\Big(\frac{\tau r^2}{\sinh \tau}\Big)^\a \frac{\tau^2}{\sinh \tau}\,d\tau=r^{2\a} 
\int_0^\infty J_{\b}(\tau)^2\Big(\frac{\tau}{\sinh \tau}\Big)^{1+\a}\tau\,d\tau\lesssim r^{2\a}.
$$
Similarly,  estimating first $I_\a$, then $J_\b$ and $F_{-\a,\b}^{(2)}(r)$ on $0<r<1$ from below gives
\begin{align*}
F_{-\a,\b}^{(2)}(r)\gtrsim r^{-2\a}\int_0^1 J_{\b}(\tau)^2 \exp\Big(-\frac{\tau r^2}{\tanh \tau}\Big)\Big(\frac{\tau}{\sinh \tau}\Big)^{1-\a}\tau\,d\tau
&\gtrsim r^{-2\a}\int_0^1 \tau^{2\b+1}\Big(\frac{\tau}{\sinh \tau}\Big)^{1-\a}\,d\tau\\
&\gtrsim  r^{-2\a}.
\end{align*}

Case iii) follows from case ii) since $F_{\a,\b}^{(2)}$ and $F_{-\a,-\b}^{(2)}$ differ.
\end{proof}
\begin{proposition} \label{pro:add}
Let $\a,\b>-1$. For every $t>0$ and $(r,s)\in\R^2_+$ we have
$$
K^\circ_{t;\a,\b}\big((r,s),(\cdot,\cdot)\big)\in L^2(\R^2_+).
$$
\end{proposition}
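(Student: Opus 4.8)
The plan is to identify $K^\circ_{t;\a,\b}\big((r,s),(\cdot,\cdot)\big)$, for each fixed $(r,s)\in\R^2_+$, as the image under the isometry $\calG_{\a,\b}^{\circ;-1}$ of an explicit element of $L^2(\Gamma^\circ)$, thereby reducing the claimed membership to the convergence of a single one-dimensional integral. First I would record the Mehler-type identity already invoked in the proof of Theorem~\ref{thm:heat},
\[
\sum_{n=0}^\infty e^{-t\tau\lambda^{\a,\circ}_n}\ell^{\a,\circ}_{n,\tau}(r)\ell^{\a,\circ}_{n,\tau}(u)=\frac{\tau}{\sinh 2t\tau}\sqrt{ru}\exp\Big(-\frac12\frac{\tau(r^2+u^2)}{\tanh 2t\tau}\Big)I_\a\Big(\frac{\tau ru}{\sinh 2t\tau}\Big)=:H_{t,\tau}(r,u),
\]
so that $K^\circ_{t;\a,\b}$ acquires the factorized form
\[
K^\circ_{t;\a,\b}\big((r,s),(u,v)\big)=\calka \tilde J^\circ_{\b,\tau}(s)\,\tilde J^\circ_{\b,\tau}(v)\,H_{t,\tau}(r,u)\,d\tau.
\]

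Next, set $\Phi_{r,s}(n,\tau):=e^{-t\tau\lambda^{\a,\circ}_n}\ell^{\a,\circ}_{n,\tau}(r)\,\tilde J^\circ_{\b,\tau}(s)$. Unwinding the definition \eqref{3.6} of $\calG_{\a,\b}^{\circ;-1}$, I would check that $\calL_{\a,\tau}^{\circ;-1}(\Phi_{r,s})_\tau(u)=\tilde J^\circ_{\b,\tau}(s)\,H_{t,\tau}(r,u)$, whence, applying $\calH^\circ_\b$ in $\tau$, $\calG_{\a,\b}^{\circ;-1}\Phi_{r,s}=K^\circ_{t;\a,\b}\big((r,s),(\cdot,\cdot)\big)$ almost everywhere (the $L^2$-Hankel transform agreeing with the convergent integral above because the relevant $\tau$-function lies in $L^1\cap L^2$). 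Since $\calG_{\a,\b}^{\circ;-1}$ is an isometry by Corollary~\ref{cor:first} and \eqref{3.9}, it then suffices to prove $\Phi_{r,s}\in L^2(\Gamma^\circ)$. Summing in $n$ by the orthonormality of $\{\ell^{\a,\circ}_{n,\tau}\}_n$ and using the same identity on the diagonal at time $2t$,
\[
\|\Phi_{r,s}\|^2_{L^2(\Gamma^\circ)}=\calka |\tilde J^\circ_{\b,\tau}(s)|^2\Big(\sum_{n=0}^\infty e^{-2t\tau\lambda^{\a,\circ}_n}|\ell^{\a,\circ}_{n,\tau}(r)|^2\Big)d\tau=\calka \tau s\,|J_\b(\tau s)|^2\,H_{2t,\tau}(r,r)\,d\tau.
\]

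The main task is then to verify the convergence of this last integral for each fixed $r,s>0$, and this is exactly where the hypotheses $\a,\b>-1$ enter. Near $\tau=0$ the factor $\tau s|J_\b(\tau s)|^2$ behaves like $s^{2\b+1}\tau^{2\b+1}$ by \eqref{4.3}, while $H_{2t,\tau}(r,r)$ tends to the finite limit $\frac{r}{4t}e^{-r^2/4t}I_\a(r^2/4t)$, so the integrand is $O(\tau^{2\b+1})$, integrable precisely because $\b>-1$. Near $\tau=\infty$ the factor $\tau s|J_\b(\tau s)|^2$ is bounded by \eqref{4.4}, whereas in $H_{2t,\tau}(r,r)$ the argument $\frac{\tau r^2}{\sinh 4t\tau}$ tends to $0$, so by \eqref{4.3} $I_\a\big(\tfrac{\tau r^2}{\sinh 4t\tau}\big)\simeq\big(\tfrac{\tau r^2}{\sinh 4t\tau}\big)^\a$; collecting the resulting factor $(\sinh 4t\tau)^{-(1+\a)}$ forces exponential decay exactly when $1+\a>0$. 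Hence the integral converges and the proof is complete. I expect the only delicate step to be the identification $\calG_{\a,\b}^{\circ;-1}\Phi_{r,s}=K^\circ_{t;\a,\b}\big((r,s),(\cdot,\cdot)\big)$, i.e. matching the $L^2$- and integral forms of the Hankel transform; everything else is bookkeeping with the spectral decomposition and the Bessel asymptotics \eqref{4.3}--\eqref{4.4}.
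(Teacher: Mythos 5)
Your proposal is correct, but it follows a genuinely different route from the paper's proof. The paper fixes $(r,s)$ and, for each fixed $u$, views $v\mapsto K^\circ_{t;\a,\b}\big((r,s),(u,v)\big)$ as the Hankel transform of the explicit $\tau$-function $F_{(r,s);u}$; Plancherel for $\calH_\b^\circ$ in the $v$-variable alone then reduces the claim to the finiteness of the double integral $\calka\calka |F_{(r,s);u}(\tau)|^2\,d\tau\,du$, which is estimated by hand (with the normalizations $t=1/2$, $r=s=1$) after splitting $\tau$ into $(0,1)$ and $(1,\infty)$ and invoking \eqref{4.3}--\eqref{4.4}. You instead exploit the full $\calG^\circ$-machinery: you realize the kernel slice as $\calG_{\a,\b}^{\circ;-1}\Phi_{r,s}$ and use the isometry \eqref{3.9}, so that the $u$-integration is performed \emph{exactly} via Parseval for the scaled Laguerre system (equivalently, the semigroup property of the Mehler kernel, $\int_0^\infty H_{t,\tau}(r,u)^2\,du=H_{2t,\tau}(r,r)$), leaving only the one-dimensional integral $\calka \tau s\,J_\b(\tau s)^2 H_{2t,\tau}(r,r)\,d\tau$ to check. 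What your approach buys: it avoids the two-dimensional estimation entirely, works for general $t,r,s$ with no normalization, and in fact yields the exact identity $\|K^\circ_{t;\a,\b}\big((r,s),(\cdot,\cdot)\big)\|_{L^2(\R^2_+)}^2=K^\circ_{2t;\a,\b}\big((r,s),(r,s)\big)$ (Chapman--Kolmogorov on the diagonal), not just finiteness. What the paper's approach buys: it is more self-contained, needing only Hankel--Plancherel and crude bounds, and requires only a one-level identification of a pointwise integral with an $L^2$-transform, whereas you need this identification at two levels (the inverse Laguerre sum pointwise in $u$, via polynomial growth of $\ell_n^{\a,\circ}$ as in the proof of Theorem~\ref{thm:heat}, and then the Hankel transform of an $L^1\cap L^2$ function of $\tau$) --- the delicate step you correctly flag. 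Two cosmetic remarks: logically you should verify $\Phi_{r,s}\in L^2(\Gamma^\circ)$ \emph{before} applying $\calG_{\a,\b}^{\circ;-1}$ to it, so the order of your two main steps should be reversed in a final writeup; and the sum in $n$ of $|\Phi_{r,s}(n,\tau)|^2$ is evaluated by the diagonal Mehler identity (together with Tonelli), not by ``orthonormality'' --- the displayed computation is nevertheless correct.
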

\begin{proof}
For simplicity of notation we consider the case $t=1/2$ only, the proof in the general case $t>0$ goes with minor changes. 
Now with $\a$ and $\b$ fixed, we write $K$ in place of $K^\circ_{1/2;\a,\b}$, and setting 
$$
F_{(r,s);u}(\tau):=\sqrt{rsu}J_\b(\tau s)\exp\Big(-\frac12\frac{\tau(r^2+u^2)}{\tanh\tau}\Big)I_\a\Big(\frac{\tau ur}{\sinh\tau}\Big)\frac{\tau^{3/2}}{\sinh\tau},
$$
we have 
$$
K\big((r,s),(u,v)\big)=\calka (\tau v)^{1/2}J_\b(\tau v)F_{(r,s);u}(\tau)\,d\tau=\calH_\b F_{(r,s);u}(v).
$$ 
Notice that $F_{(r,s);u}\in L^2(0,\infty)$. Then, by applying Plancherel`s identity for $\calH_\b$,
$$
\calka\calka|K\big((r,s),(u,v)\big)|^2\,dudv=\calka\calka|\calH_\b F_{(r,s);u}(v)|^2\,dvdu=\calka\calka| F_{(r,s);u}(\tau)|^2\,d\tau du.
$$
It remains to check that the last double integral is finite. Again, to simplify the notation we consider the case $r=s=1$ only; the proof for general $r,s>0$ 
is analogous. Then we split the inner integration onto $(0,1)$ and $(1,\infty)$, and use the bounds \eqref{4.3} and \eqref{4.4} for $J_\b$, to be left with the integrals
$$
A_0=\calka u\int_0^1 \tau^{2\b}\frac{\tau^{3}}{\sinh^2\tau}\exp\Big(-\frac{\tau(1+u^2)}{\tanh\tau}\Big)I_\a\Big(\frac{\tau u}{\sinh\tau}\Big)^2\,d\tau du
$$ 
and
$$
A_\infty=\calka u\int_1^\infty \frac 1{\tau}\frac{\tau^{3}}{\sinh^2\tau}\exp\Big(-\frac{\tau(1+u^2)}{\tanh\tau}\Big)I_\a\Big(\frac{\tau u}{\sinh\tau}\Big)^2\,d\tau du
$$ 
to bound. For $A_0$, using $\frac{\tau}{\sinh\tau}<1<\frac{\tau}{\tanh\tau}$ on $(0,1)$ and the asymptotic \eqref{4.3} for $I_\a$ gives 
$$
A_0\lesssim\calka u^{2\a+1}\exp(-u^2)\int_0^1 \tau^{2\b+1}\,d\tau du<\infty.
$$ 
To estimate $A_\infty$, using first $1<\frac{\tau}{\tanh\tau}$ on $(0,1)$, changing the order of integration and then splitting the inner integration  appropriately to apply the bound \eqref{4.4} on $I_\a$, we write
\begin{align*}
A_\infty &\lesssim\int_1^\infty \Big(\frac{\tau}{\sinh\tau} \Big)^2\calka u\exp(-u^2)I_\a\Big(\frac{\tau u}{\sinh\tau}\Big)^2\,du\, d\tau\\
&\lesssim\int_1^\infty \Big(\frac{\tau}{\sinh\tau} \Big)^{2\a+2}\int_0^{\frac{\sinh\tau}{\tau}}u^{2\a+1}\exp(-u^2)\,du\, d\tau+
  \int_1^\infty \Big(\frac{\tau}{\sinh\tau} \Big)^{\frac32}\int_{\frac{\sinh\tau}{\tau}}^\infty u^{\frac12}\exp(-u^2)\,du\, d\tau\\
&\lesssim\int_1^\infty \Big(\frac{\tau}{\sinh\tau} \Big)^{2(\a+1)}\, d\tau+
  \int_1^\infty \Big(\frac{\tau}{\sinh\tau}\Big)^{3/2}\,d\tau<\infty.
\end{align*}
\end{proof}

\section{Sesquilinear forms and self-adjoint extensions of $G^\circ_{\a,\b}$} \label{sec:self2}  
We begin with recalling a necessary part of a theory that links nonnegative sesquilinear forms and nonnegative self-adjoint operators on Hilbert spaces. 
For details,  see, e.g., \cite[Chapter 10]{Sch}, \cite{Ou}. 

In the general setting, for a (sesquilinear) form $\mathfrak{s}$ given on a Hilbert space $(H, \langle\cdot,\cdot\rangle_H)$ with dense domain 
$\D\,\mathfrak{s}$, the associated operator $A_\mathfrak{s}$ is defined by 
\begin{align*}
\D\,A_\mathfrak{s}&=\{h\in \D\,\mathfrak{s}\colon \exists u_h\in H\,\,\,\forall h'\in \D\,\mathfrak{s}\,\,\,\, 
\mathfrak{s}[h,h']=\langle u_h,h'\rangle_H\},\\
A_\mathfrak{s}h&=u_h \quad{\rm for}\quad h\in \D\,A_\mathfrak{s}.
\end{align*}
Then, if $\mathfrak{s}$ is Hermitian closed and nonnegative,  $A_\mathfrak{s}$ is self-adjoint and nonnegative. Recall that a nonnegative form $\mathfrak{s}$ 
is called closed provided the norm $\|h\|_{\mathfrak{s}}=\big(\mathfrak{s}[h,h]^2+\langle h,h\rangle_H^2  \big)^{1/2}$ defined on $\D\,\mathfrak{s}$ is complete. 

On the other hand, if $A$ is a  self-adjoint nonnegative operator on $H$, then the associated form is $\mathfrak{s}_A[x,y]:=\langle A^{1/2}x,A^{1/2}y\rangle_H$ 
with domain $\D\,\mathfrak{s}_A:=\D\,A^{1/2}$; $\mathfrak{s}_A$ is a densely defined Hermitian closed nonnegative form. 

It is an important observation in this theory that the mappings $\mathfrak{s}\mapsto A_\mathfrak{s}$ and $A\mapsto \mathfrak{s}_A$ 
are mutually reciprocal bijections between the set of all densely defined Hermitian closed nonnegative forms on $H$ and the set of all self-adjoint  
nonnegative operators on $H$. Notably, $\mathfrak{s}_{A_\mathfrak{s}}=\mathfrak{s}$ and $A_{\mathfrak{s}_A}=A$.

It is also well known that any densely defined symmetric and nonnegative (more generally, lower semibounded) operator $S$ has a self-adjoint extension 
which is also nonnegative (more generally, lower semibounded, which preserves the lower bound). The construction of this operator, denoted $S_F$ and 
nowadays called the Friedrichs extension of $S$, was given by Friedrichs in 1933. It goes as follows. Let $S$ be as above and let $\mathfrak{s}_S$ be 
defined as $\mathfrak{s}_S[x,y]=\langle Sx,y\rangle_H$ on $\D\,S$ so that $\mathfrak{s}_S$ is densely defined Hermitian and nonnegative. 
But more importantly, $\mathfrak{s}_S$ is closable, see \cite[Lemma~10.16]{Sch}. Let  $\overline{\mathfrak{s}_S}$ be the closure of  $\mathfrak{s}_S$. 
Although the completion procedure in the definition of $\overline{\mathfrak{s}_S}$ is abstract from its nature it can be shown that $\overline{\mathfrak{s}_S}$ may be realized in $H$, which means, in particular, that $\D\,\overline{\mathfrak{s}_S}\subset H$. Then, $S_F$ is just $A_{\overline{\mathfrak{s}_S}}$, 
the operator associated to $\overline{\mathfrak{s}_S}$. See, e.g.,  \cite[Definition 10.6]{Sch}. 

Coming back to the main line of our consideration, fix $\a,\b\in\R$ and define the operators
$$
\mathfrak{d}_1^\circ:=\mathfrak{d}_{1;\a}^\circ=\frac{\partial}{\partial r}-\frac{\a+1/2}r,\qquad \mathfrak{d}_1^{\circ,\dagger}
:=\mathfrak{d}_{1;\a}^{\circ,\dagger}=-\frac{\partial}{\partial r}-\frac{\a+1/2}r,
$$
and 
$$
\mathfrak{d}_2^\circ:=\mathfrak{d}_{2;\b}^\circ=r\Big(\frac{\partial}{\partial s}-\frac{\b+1/2}s\Big),\qquad 
\mathfrak{d}_2^{\circ,\dagger}:=\mathfrak{d}_{2;\b}^{\circ,\dagger}=r\Big(-\frac{\partial}{\partial s}-\frac{\b+1/2}s\Big),
$$
each with $C^\infty_c(\R^2_+)\subset L^2(\R^2_+)$ as domain. It is easily seen that $\mathfrak{d}_1^{\circ,\dagger}$ and $\mathfrak{d}_2^{\circ,\dagger}$, 
are the restrictions of the adjoint operators $\mathfrak{d}_1^{\circ,*}$ and $\mathfrak{d}_2^{\circ,*}$ to $C^\infty_c(\R^2_+)$. 
In particular, for $ j=1,2$, it holds
$$
\int_{\R^2_+} \mathfrak{d}_j^{\circ,\dagger}\vp(r,s) \overline{\psi(r,s)}\,drds
=\int_{\R^2_+}\vp(r,s) \overline{\mathfrak{d}_j^{\circ}\psi(r,s)}\,drds,  \qquad \vp,\psi\in C^\infty_c(\R^2_+).
$$
We call $\mathfrak{d}_j^\circ$, $j=1,2$, \textit{the partial delta-derivatives} associated to $G^\circ_{\a,\b}$.

Essential for further development is the decomposition
$$
G^\circ_{\a,\b}=\mathfrak{d}_1^{\circ,\dagger} \mathfrak{d}_1^\circ+\mathfrak{d}_2^{\circ,\dagger} \mathfrak{d}_2^\circ;
$$
notice that $\mathfrak{d}_j^\circ(C^\infty_c(\R^2_+))\subset C^\infty_c(\R^2_+)$, $j=1,2$. Observe that for fixed $\a,\b,$ the decomposition of 
$G^\circ_{\a,\b}$ of this type is not unique, since the choice of $\mathfrak{d}_{1;\pm\a}^\circ$ and $\mathfrak{d}_{2;\pm\b}^\circ$, 
with the signs $+/-$ taken independently, brings the same outcome. 

Connected to the partial delta-derivatives is the concept of the weak partial derivatives.
\begin{definition}\label{def:weak}
Let $\a,\b\in\R$ be given, $f\in L^1_{\rm loc}(\R^2_+)$, and $j\in\{1,2\}$. We say that the weak (partial) $\mathfrak{d}_j^\circ$-\textit{derivative} of $f$ exists provided there is $g_j\in L^1_{\rm loc}(\R^2_+)$ such that
$$
\int_{\R^2_+}\mathfrak{d}_j^{\circ,\dagger}\vp(r,s)\overline{f(r,s)}\,drds=\int_{\R^2_+} \vp(r,s)\overline{g_j(r,s)}\,drds, \qquad \vp\in C^\infty_c({\R^2_+}).
$$ 
Then we set $\mathfrak{d}_{j,{\rm weak}}^{\circ}f:=g_j$  and call $g_j$ the weak $\mathfrak{d}_j^\circ$-derivative of $f$.
\end{definition}

Clearly, for $\a=-1/2$, the notion of weak $\mathfrak{d}_1^\circ$-derivative coincides with the notion of the usual  weak partial derivative. 
We write $\partial_{j,{\rm weak}}f$,  to denote the usual weak partial derivatives of $f$ on $\R^2_+$. For general $\a$ we have the following.

\begin{lemma} \label{lem:weak}
Let $\a,\b\in\R$ and $f\in L^1_{\rm loc}(\R^2_+)$. Then, for $j=1,2$, $\mathfrak{d}_{j,{\rm weak}}^\circ f$ exists if and only if $\partial_{j,{\rm weak}}f$ exists; moreover
$$
\mathfrak{d}_{1,{\rm weak}}^\circ f=\partial_{1,{\rm weak}}f-\frac{\a+1/2}r f\quad and \quad 
\mathfrak{d}_{2,{\rm weak}}^\circ f=r\Big(\partial_{2,{\rm weak}}f-\frac{\b+1/2}r f\Big).
$$ 
\end{lemma}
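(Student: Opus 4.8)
The plan is to exploit that each partial delta-derivative differs from the corresponding ordinary partial derivative only by multiplication by a coefficient that is smooth and bounded on compact subsets of the open quarter plane. Writing $\partial_1=\partial/\partial r$, $\partial_2=\partial/\partial s$ and unwinding the adjoint operators, $\mathfrak{d}_1^{\circ,\dagger}\vp=-\partial_1\vp-\frac{\a+1/2}{r}\vp$ and $\mathfrak{d}_2^{\circ,\dagger}\vp=-r\partial_2\vp-\frac{(\b+1/2)r}{s}\vp$, I would insert these into Definition~\ref{def:weak} and convert the resulting integral identity into the one defining the usual weak partial derivative (in the conjugated convention fixed by the paper, under which $\partial_{j,{\rm weak}}f=h$ means $\int_{\R^2_+}(\partial_j\vp)\overline f=-\int_{\R^2_+}\vp\,\overline h$; this convention is consistent with the remark that for $\a=-1/2$ the two notions agree). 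Since on $\R^2_+$ the functions $1/r$, $1/s$ and $r$ are locally bounded, multiplication by them preserves $L^1_{\rm loc}(\R^2_+)$ and maps $C^\infty_c(\R^2_+)$ into itself; this is precisely what lets one pass between the two distributional identities, yielding both the equivalence and the formula simultaneously.

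First I would treat $j=1$. Assuming $\partial_{1,{\rm weak}}f=h$ exists, I would compute, for $\vp\in C^\infty_c(\R^2_+)$,
\begin{align*}
\int_{\R^2_+}\mathfrak{d}_1^{\circ,\dagger}\vp\,\overline f
&=-\int_{\R^2_+}(\partial_1\vp)\overline f-\int_{\R^2_+}\frac{\a+1/2}{r}\vp\,\overline f\\
&=\int_{\R^2_+}\vp\,\overline{\Big(h-\frac{\a+1/2}{r}f\Big)},
\end{align*}
using $\int_{\R^2_+}(\partial_1\vp)\overline f=-\int_{\R^2_+}\vp\,\overline h$ and that $\frac{\a+1/2}{r}$ is real. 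As $1/r$ is locally bounded on $\R^2_+$, the function $g_1:=h-\frac{\a+1/2}{r}f$ lies in $L^1_{\rm loc}(\R^2_+)$, so $\mathfrak{d}_{1,{\rm weak}}^\circ f$ exists and equals $g_1$. Running the same chain of equalities backwards gives the converse: if $\mathfrak{d}_{1,{\rm weak}}^\circ f=g_1$ exists, then $h:=g_1+\frac{\a+1/2}{r}f\in L^1_{\rm loc}(\R^2_+)$ is the weak $\partial_1$-derivative of $f$. This settles the equivalence and the first formula.

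For $j=2$ the scheme is identical once I bring in the bijection $\vp\mapsto r\vp$ of $C^\infty_c(\R^2_+)$ onto itself, whose inverse $\psi\mapsto\psi/r$ again lands in $C^\infty_c(\R^2_+)$ because $1/r$ is smooth and bounded on compact subsets of $\R^2_+$. Using that $r$ does not depend on $s$, so that $r\,\partial_2\vp=\partial_2(r\vp)$, I would rewrite $\int_{\R^2_+}\mathfrak{d}_2^{\circ,\dagger}\vp\,\overline f$ by applying the ordinary weak $\partial_2$-identity to the test function $r\vp$; this produces $g_2=r\big(h-\frac{\b+1/2}{s}f\big)$ with $h=\partial_{2,{\rm weak}}f$, the converse direction being obtained by testing against $\psi/r$ in place of $\vp$. (Here the coefficient is $\frac{\b+1/2}{s}$, as forced by the definition of $\mathfrak{d}_2^\circ$.)

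The content is elementary and the only genuine point is local integrability: everything works because on the \emph{open} quarter plane the weights $1/r$, $1/s$ and $r$ are smooth and bounded on every compact set, so multiplication by them both preserves $L^1_{\rm loc}(\R^2_+)$ and carries test functions to test functions, which is exactly what is needed to move between the two defining identities. I expect the only place calling for care to be the mild bookkeeping in the case $j=2$---transferring the identity through the rescaling $\vp\mapsto r\vp$ while correctly tracking the complex conjugates.
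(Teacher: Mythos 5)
Your proof is correct and takes essentially the same route as the paper: unwind $\mathfrak{d}_j^{\circ,\dagger}$, pass between the two defining integral identities, and use that $1/r$, $1/s$ and $r$ are smooth and locally bounded on the open quarter plane so that the correction terms stay in $L^1_{\rm loc}(\R^2_+)$ and test functions map to test functions. Your explicit handling of $j=2$ via the substitution $\vp\mapsto r\vp$ (using $r\,\partial_2\vp=\partial_2(r\vp)$) just fills in what the paper dismisses as ``completely analogous,'' and you are right that the coefficient in the second formula must be $\frac{\b+1/2}{s}$, as forced by the definition of $\mathfrak{d}_2^\circ$.
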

\begin{proof}
The proof  is straightforward. Let $\a,\b\in\R$  and $f\in L^1_{\rm loc}(\R^2_+)$ be given. For $j=1$ we verify that the conditions
\begin{equation}\label{con1}
\exists g_1\in L^1_{\rm loc}(\R^2_+)\,\,\,\forall \vp\in C^\infty_c(\R^2_+) \,\,\,\quad\int \mathfrak{d}_1^{\circ,\dagger}\vp\,\overline{f}=\int\vp\,\overline{ g_1}
\end{equation}
(here and below, integration goes over $\R^2_+$ with respect to $drds$) and 
\begin{equation}\label{con2}
\exists h_1\in L^1_{\rm loc}(\R^2_+)\,\,\,\forall \vp\in C^\infty_c(\R^2_+) \,\,\,\quad\int \partial_1\vp\,\overline{f}=-\int\vp\,\overline{h_1},
\end{equation}
are equivalent and, moreover, $g_1=h_1-\frac{\a+1/2}r f$. Indeed, \eqref{con1} asserts  that there is $g_1$ such that
$$
\int \partial_1\vp\,\overline{f}=-\int\vp\,\overline{ g_1}-\int\vp\,\frac{\a+1/2}r \overline{f}=-\int\vp\,\overline{\big(g_1-\frac{\a+1/2}r f\big)}, 
$$
so that  \eqref{con2} follows with $h_1=g_1+\frac{\a+1/2}r f$. Conversely,  \eqref{con2} leads to 
$$
\int \mathfrak{d}_1^{\circ,\dagger}\vp\,\overline{f}=\int\vp\,\overline{h_1}-\int\vp\,\frac{\a+1/2}r \overline{f}=\int\vp\,\overline{\big(h_1-\frac{\a+1/2}r f\big)},
$$
so that  \eqref{con1} follows with $g_1=h_1-\frac{\a+1/2}r f$. The case $j=2$ is treated completely analogously.
\end{proof}

Despite of the above result, which equalizes the existence of $\partial_{j,{\rm weak}}f$ with that of $\mathfrak{d}_{j,{\rm weak}}^\circ f$, the concept 
of $\mathfrak{d}_j^\circ$-derivatives is still convenient in the following definition of Sobolev-type spaces (of first order) connected to $G^\circ_{\a,\b}$. 
\begin{definition}\label{def:sob}
Let $\a,\b\in\R$. The delta-Sobolev space $W_{\a,\b}^\circ(\R^2_+)$ is the space
$$
W_{\a,\b}^\circ(\R^2_+)=\big\{f\in L^2(\R^2_+)\colon \mathfrak{d}_{j,{\rm weak}}^\circ f \,\, exist \,\,for\,\, j=1,2,\,\, and  \,\,are \,\,in\,\,\,
L^2(\R^2_+)\big\},
$$ 
equipped with the inner product
$$
\langle f,g\rangle_{W_{\a,\b}^\circ(\R^2_+)}=\langle f,g\rangle +\sum_{j=1}^2
\langle \mathfrak{d}_{j,{\rm weak}}^\circ f,\mathfrak{d}_{j,{\rm weak}}^\circ g\rangle .
$$
The closure of $C^\infty_c(\R^2_+)$ in $W_{\a,\b}^\circ(\R^2_+)$ with respect to the norm $\|\cdot\|_{W_{\a,\b}^\circ  (\R^2_+)}$ generated by 
$\langle\cdot,\cdot\rangle_{W_{\a,\b}^\circ(\R^2_+)}$ is denoted $W_{0;\a,\b}^\circ(\R^2_+)$.
\end{definition}

The following proposition has a relatively standard proof hence we omit it.

\begin{proposition} \label{pro:Sob1}
The delta-Sobolev space $W_{\a,\b}^\circ(\R^2_+)$ is a Hilbert space.
\end{proposition}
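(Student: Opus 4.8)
The plan is to show that the form $\langle\cdot,\cdot\rangle_{W_{\a,\b}^\circ(\R^2_+)}$ is a genuine inner product and that $W_{\a,\b}^\circ(\R^2_+)$ is complete in the induced norm. That it is an inner product is immediate: it is plainly sesquilinear and Hermitian, and it is nonnegative as a sum of nonnegative terms. Positive definiteness already follows from the single summand $\langle f,f\rangle$, since $\langle f,f\rangle_{W_{\a,\b}^\circ(\R^2_+)}=0$ forces $\|f\|_{L^2(\R^2_+)}^2=0$ and hence $f=0$. The substantive point is completeness, which I would establish by the standard argument realizing $W_{\a,\b}^\circ(\R^2_+)$ as a closed subspace of a product of copies of $L^2(\R^2_+)$.

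Concretely, I would take a Cauchy sequence $\{f_k\}$ in $W_{\a,\b}^\circ(\R^2_+)$. By the very definition of the norm, each of the three sequences $\{f_k\}$, $\{\mathfrak{d}_{1,{\rm weak}}^\circ f_k\}$, and $\{\mathfrak{d}_{2,{\rm weak}}^\circ f_k\}$ is Cauchy in $L^2(\R^2_+)$. Since $L^2(\R^2_+)$ is complete, there exist $f,g_1,g_2\in L^2(\R^2_+)$ with $f_k\to f$ and $\mathfrak{d}_{j,{\rm weak}}^\circ f_k\to g_j$ in $L^2(\R^2_+)$ for $j=1,2$. It then remains to prove that $f\in W_{\a,\b}^\circ(\R^2_+)$ with $\mathfrak{d}_{j,{\rm weak}}^\circ f=g_j$, for this gives $\|f_k-f\|_{W_{\a,\b}^\circ(\R^2_+)}\to 0$.

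To identify the weak derivatives of the limit, I would pass to the limit in the defining identity of Definition~\ref{def:weak}. Fix $\vp\in C^\infty_c(\R^2_+)$ and $j\in\{1,2\}$. For each $k$,
$$
\int_{\R^2_+}\mathfrak{d}_j^{\circ,\dagger}\vp\,\overline{f_k}\,drds=\int_{\R^2_+}\vp\,\overline{\mathfrak{d}_{j,{\rm weak}}^\circ f_k}\,drds.
$$
Both $\vp$ and $\mathfrak{d}_j^{\circ,\dagger}\vp$ are compactly supported in $\R^2_+$ and bounded: the support of $\vp$ is bounded away from the axes, so the factors $1/r$ and $1/s$ entering $\mathfrak{d}_j^{\circ,\dagger}$ cause no difficulty and both functions lie in $L^2(\R^2_+)$. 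Each side is therefore the $L^2(\R^2_+)$-pairing of a convergent sequence against a fixed $L^2$ element, so letting $k\to\infty$ yields
$$
\int_{\R^2_+}\mathfrak{d}_j^{\circ,\dagger}\vp\,\overline{f}\,drds=\int_{\R^2_+}\vp\,\overline{g_j}\,drds.
$$
By Definition~\ref{def:weak} this says precisely that $\mathfrak{d}_{j,{\rm weak}}^\circ f$ exists and equals $g_j\in L^2(\R^2_+)$. Hence $f\in W_{\a,\b}^\circ(\R^2_+)$ and $f_k\to f$ in the $W$-norm, which proves completeness.

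The only point requiring mild care — and the closest thing to an obstacle — is the passage to the limit in the integral identity. This rests on the observation that the fixed test functions $\vp$ and $\mathfrak{d}_j^{\circ,\dagger}\vp$ belong to $L^2(\R^2_+)$ (equivalently, that $L^2$-convergence of $\{f_k\}$ forces $L^1$-convergence on the compact support of $\vp$ via the Cauchy--Schwarz inequality), so that the continuity of the $L^2(\R^2_+)$ inner product against a fixed argument applies on both sides simultaneously.
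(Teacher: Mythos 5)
Your proposal is correct, and it is exactly the "relatively standard" argument the paper alludes to when it omits the proof: each component of a Cauchy sequence converges in $L^2(\R^2_+)$, and the weak-derivative relation of Definition~\ref{def:weak} is closed under $L^2$ convergence because the test functions $\vp$ and $\mathfrak{d}_j^{\circ,\dagger}\vp$ are fixed elements of $C^\infty_c(\R^2_+)\subset L^2(\R^2_+)$. Nothing is missing; in particular your observation that the supports of test functions stay away from the axes, so the singular coefficients $1/r$, $1/s$ (and the unbounded factor $r$) are harmless, is precisely the point that makes the classical Sobolev-space argument transfer verbatim to this weighted setting.
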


Let $\a,\b\in\R$. Define the form $\mathfrak{t}^\circ=\mathfrak{t}^\circ_{\a,\b}$ with domain $W_{\a,\b}^\circ(\R^2_+)$ by
$$
\mathfrak{t}^\circ[f,g]=\int_{\R^2_+}\sum_{j=1}^2  \mathfrak{d}_{j,{\rm weak}}^\circ f(r,s)\overline{\mathfrak{d}_{j,{\rm weak}}^\circ g(r,s)}\,drds. 
$$
The form $\mathfrak{t}^\circ$ restricted to  $W_{0;\a,\b}^\circ(\R^2_+)$ is denoted $\mathfrak{t}^\circ_0$. 
Thus $\D\,\mathfrak{t}^\circ=W_{\a,\b}^\circ(\R^2_+)$ and $\D\,\mathfrak{t}^\circ_0=W_{0;\a,\b}^\circ(\R^2_+)$. Obviously, $\mathfrak{t}^\circ$ and 
$\mathfrak{t}^\circ_0$ are  Hermitian nonnegative and densely defined on $L^2(\R^2_+)$.

In what follows, $\mathbb{D}_{\a,\b}^\circ$ and $\mathbb{D}_{0;\a,\b}^\circ$ denote the operators associated with forms $\mathfrak{t}^\circ$ and 
$\mathfrak{t}^\circ_0$, respectively.

\begin{theorem} \label{thm:oper}
Let $\a,\b>-1$. The operators $\mathbb{D}_{\a,\b}^\circ$ and $\mathbb{D}_{0;\a,\b}^\circ$ are self-adjoint and nonnegative extensions of $G^\circ_{\a,\b}$. 
Moreover, $\mathbb{D}_{0;\a,\b}^\circ$ is the Friedrichs extension of $G^\circ_{\a,\b}$.
\end{theorem}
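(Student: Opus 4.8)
The plan is to invoke the form representation theory recalled at the beginning of this section. The forms $\mathfrak{t}^\circ$ and $\mathfrak{t}^\circ_0$ have already been observed to be Hermitian, nonnegative and densely defined on $L^2(\R^2_+)$, so the only thing left before applying the representation theorem is closedness. This is where Proposition~\ref{pro:Sob1} does the work: the form norm of $\mathfrak{t}^\circ$ satisfies $\mathfrak{t}^\circ[f,f]+\langle f,f\rangle=\langle f,f\rangle_{W_{\a,\b}^\circ(\R^2_+)}$, so the form norm coincides with the Sobolev norm on $\D\,\mathfrak{t}^\circ=W_{\a,\b}^\circ(\R^2_+)$, which is complete by Proposition~\ref{pro:Sob1}; hence $\mathfrak{t}^\circ$ is closed. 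Since $\D\,\mathfrak{t}^\circ_0=W_{0;\a,\b}^\circ(\R^2_+)$ is by definition a closed subspace of $W_{\a,\b}^\circ(\R^2_+)$, it is complete in the same norm, so $\mathfrak{t}^\circ_0$ is closed as well. Consequently $\mathbb{D}_{\a,\b}^\circ=A_{\mathfrak{t}^\circ}$ and $\mathbb{D}_{0;\a,\b}^\circ=A_{\mathfrak{t}^\circ_0}$ are self-adjoint and nonnegative.

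Next I would verify that both operators extend $G^\circ_{\a,\b}$. Any $\vp\in C^\infty_c(\R^2_+)$ lies in $W_{0;\a,\b}^\circ(\R^2_+)\subset W_{\a,\b}^\circ(\R^2_+)$, its weak delta-derivatives coinciding with the classical $\mathfrak{d}_j^\circ\vp\in C^\infty_c(\R^2_+)$, so $\vp$ belongs to both form domains. For $h'$ in $\D\,\mathfrak{t}^\circ$ (respectively $\D\,\mathfrak{t}^\circ_0$), applying the defining identity of the weak $\mathfrak{d}_j^\circ$-derivative (Definition~\ref{def:weak}) with the admissible test function $\mathfrak{d}_j^\circ\vp$ gives $\langle \mathfrak{d}_j^\circ\vp,\mathfrak{d}_{j,{\rm weak}}^\circ h'\rangle=\langle \mathfrak{d}_j^{\circ,\dagger}\mathfrak{d}_j^\circ\vp,h'\rangle$. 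Summing over $j=1,2$ and using the decomposition $G^\circ_{\a,\b}=\mathfrak{d}_1^{\circ,\dagger}\mathfrak{d}_1^\circ+\mathfrak{d}_2^{\circ,\dagger}\mathfrak{d}_2^\circ$ yields $\mathfrak{t}^\circ[\vp,h']=\langle G^\circ_{\a,\b}\vp,h'\rangle$, and likewise for $\mathfrak{t}^\circ_0$. By the definition of the associated operator this shows $\vp\in\D\,\mathbb{D}_{\a,\b}^\circ\cap\D\,\mathbb{D}_{0;\a,\b}^\circ$ with $\mathbb{D}_{\a,\b}^\circ\vp=\mathbb{D}_{0;\a,\b}^\circ\vp=G^\circ_{\a,\b}\vp$, so both are self-adjoint nonnegative extensions of $G^\circ_{\a,\b}$.

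Finally, to identify $\mathbb{D}_{0;\a,\b}^\circ$ as the Friedrichs extension I would compare $\mathfrak{t}^\circ_0$ with the closure of the form $\mathfrak{s}_{G^\circ_{\a,\b}}[\vp,\psi]=\langle G^\circ_{\a,\b}\vp,\psi\rangle$ on $\D\,G^\circ_{\a,\b}=C^\infty_c(\R^2_+)$. The computation of the previous paragraph, specialized to $\psi\in C^\infty_c(\R^2_+)$, gives $\mathfrak{s}_{G^\circ_{\a,\b}}[\vp,\psi]=\sum_{j}\langle\mathfrak{d}_j^\circ\vp,\mathfrak{d}_j^\circ\psi\rangle=\mathfrak{t}^\circ_0[\vp,\psi]$, so the two forms agree on $C^\infty_c(\R^2_+)$; in particular their form norms agree there and both restrict the $W_{\a,\b}^\circ(\R^2_+)$-norm. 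Since $\mathfrak{t}^\circ_0$ is closed and $C^\infty_c(\R^2_+)$ is, by Definition~\ref{def:sob}, dense in $\D\,\mathfrak{t}^\circ_0=W_{0;\a,\b}^\circ(\R^2_+)$ in the form norm, $\mathfrak{t}^\circ_0$ is precisely the closure $\overline{\mathfrak{s}_{G^\circ_{\a,\b}}}$, the representability of this abstract closure inside $L^2(\R^2_+)$ being exactly the statement that the form-norm completion of $C^\infty_c(\R^2_+)$ is realized as $W_{0;\a,\b}^\circ(\R^2_+)$, with form values extending by continuity to $\mathfrak{t}^\circ_0$. By the definition of the Friedrichs extension recalled above, $(G^\circ_{\a,\b})_F=A_{\overline{\mathfrak{s}_{G^\circ_{\a,\b}}}}=A_{\mathfrak{t}^\circ_0}=\mathbb{D}_{0;\a,\b}^\circ$. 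I expect the main obstacle to be precisely this last identification: one must ensure that the abstract completion in Friedrichs' construction is faithfully represented in $L^2(\R^2_+)$ as $W_{0;\a,\b}^\circ(\R^2_+)$, i.e. that a form-norm Cauchy sequence $\{\vp_n\}\subset C^\infty_c(\R^2_+)$ has an $L^2$ limit $f$ whose weak delta-derivatives are the $L^2$ limits of $\mathfrak{d}_j^\circ\vp_n$, so that no information is lost and the limiting form value equals $\mathfrak{t}^\circ_0[f,f]$. This consistency, which rests on the fact that convergence in the form norm dominates $L^2$ convergence together with the completeness furnished by Proposition~\ref{pro:Sob1}, is the only non-formal point; everything else is a direct application of the representation theorem and of the integration-by-parts identity built into Definition~\ref{def:weak}.
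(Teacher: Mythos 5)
Your proof is correct. The closedness argument (form norm equals the Sobolev norm, completeness from Proposition~\ref{pro:Sob1}, then restriction to the closed subspace $W_{0;\a,\b}^\circ(\R^2_+)$) and the identification of $\mathbb{D}_{0;\a,\b}^\circ$ with the Friedrichs extension (the forms $\mathfrak{s}^\circ[\vp,\psi]=\langle G^\circ_{\a,\b}\vp,\psi\rangle$ and $\mathfrak{t}^\circ_0$ agree on $C^\infty_c(\R^2_+)$, which is form-norm dense in $\D\,\mathfrak{t}^\circ_0$, and $\mathfrak{t}^\circ_0$ is closed, hence $\mathfrak{t}^\circ_0=\overline{\mathfrak{s}^\circ}$, the realizability of the abstract completion in $L^2(\R^2_+)$ being secured by completeness of the delta-Sobolev space) follow the same route as the paper.

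Where you genuinely diverge is the verification that $C^\infty_c(\R^2_+)$ lies in both operator domains with $\mathbb{D}_{\a,\b}^\circ\vp=\mathbb{D}_{0;\a,\b}^\circ\vp=G^\circ_{\a,\b}\vp$, i.e. the identity $\mathfrak{t}^\circ[\vp,g]=\langle G^\circ_{\a,\b}\vp,g\rangle$ for all $g\in W_{\a,\b}^\circ(\R^2_+)$, which is \eqref{part} in the paper. The paper proves \eqref{part} by translating the weak delta-derivatives of $g$ into ordinary weak derivatives via Lemma~\ref{lem:weak}, choosing a smooth bounded domain $\Omega\supset{\rm supp}\,\vp$ on which $g\in H^1(\Omega)$, and invoking Gauss' formula for Euclidean Sobolev functions. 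You instead note that $\mathfrak{d}_j^\circ\vp\in C^\infty_c(\R^2_+)$ is itself an admissible test function, so the defining identity of Definition~\ref{def:weak}, applied to $g$ with test function $\mathfrak{d}_j^\circ\vp$, immediately gives $\langle\mathfrak{d}_j^{\circ,\dagger}\mathfrak{d}_j^\circ\vp,g\rangle=\langle\mathfrak{d}_j^\circ\vp,\mathfrak{d}_{j,{\rm weak}}^\circ g\rangle$; summing over $j$ and using the decomposition $G^\circ_{\a,\b}=\mathfrak{d}_1^{\circ,\dagger}\mathfrak{d}_1^\circ+\mathfrak{d}_2^{\circ,\dagger}\mathfrak{d}_2^\circ$ yields \eqref{part} at once. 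This exploits precisely the facts that Definition~\ref{def:weak} is phrased with $\mathfrak{d}_j^{\circ,\dagger}$ on the test-function side and that $\mathfrak{d}_j^\circ$ preserves $C^\infty_c(\R^2_+)$; it requires no auxiliary domain, no regularity considerations, and neither Lemma~\ref{lem:weak} nor Gauss' formula. What the paper's longer route buys is the explicit bridge between the delta-Sobolev spaces and classical $H^1$ theory, which has independent interest; as a proof of this theorem, however, your argument is shorter and more self-contained, and it is correct as stated.
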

\begin{proof} 
To complete the list of properties of $\mathfrak{t}^\circ$ we observe that $\mathfrak{t}^\circ$  is closed; this fact is just  completeness of the norm 
$\|\cdot\|_{W_{\a,\b}^\circ(\R^2_+)}$. Consequently, $\mathfrak{t}^\circ_0$ is closed. Thus, by the general theory, $\mathbb{D}_{\a,\b}^\circ$ and 
$\mathbb{D}_{0;\a,\b}^\circ$ are self-adjoint and nonnegative. We now check that they extend $G^\circ_{\a,\b}$. From the general definition we have 
$$
\D\,\mathbb{D}_{0;\a,\b}^\circ=\{f\in W_{0;\a,\b}^\circ(\R^2_+)\colon \exists u_f\in L^2(\R^2_+)\,\,\,\forall g\in W_{0;\a,\b}^\circ(\R^2_+)\,\,\, 
\mathfrak{t}^{\circ}_0[f,g]=\langle u_f,g\rangle \},
$$
and $\mathbb{D}_{0;\a,\b}^\circ f=u_f$ for $f\in \D\,\mathbb{D}_{0;\a,\b}^\circ$, and similarly for $\mathbb{D}_{\a,\b}^\circ$. 

We claim that $C^\infty_c(\R^2_+)\subset \D\,\mathbb{D}_{0;\a,\b}^\circ$ and $\mathbb{D}_{0;\a,\b}^\circ\vp=G^\circ_{\a,\b}\vp$ for 
$\vp\in C^\infty_c(\R^2_+)$, and analogously for $\mathbb{D}_{\a,\b}^\circ$. For this purpose it suffices to check (in both cases) that given 
$\vp\in C^\infty_c(\R^2_+)$, for every $g\in W_{\a,\b}^\circ(\R^2_+)$ it holds
\begin{equation}\label{part}
\int_{\R^2_+} \sum_{j=1}^2\mathfrak{d}_{j}^\circ \vp\, \overline{\mathfrak{d}_{j,{\rm weak}}^\circ g}
=\langle G^\circ_{\a,\b} \vp,g\rangle 
\end{equation}
(integration with respect to $drds$ here and below). But expanding both sides of \eqref{part} with the aid of Lemma~\ref{lem:weak}, canceling common terms and abbreviating $\a^*:=\a+1/2$, $\b^*:=\b+1/2$, $\partial_jg:=\partial_{j,{\rm weak}}g$ leads to 
the equivalent version of \eqref{part}, namely
\begin{align}\label{part2}
\int_{\R^2_+}\big(\partial_1\vp-\frac{\a^*}{r} \vp\big)\overline{\partial_1g}+&\int_{\R^2_+}r^2\big(\partial_2\vp-\frac{\b^*}r\vp\big)\overline{\partial_2g}\nonumber \\ 
&=-\int_{\R^2_+}\Big(\partial_1\big(\partial_1\vp-\frac{\a^*}{r} \vp\big) + \partial_2\big(r^2\big(\partial_2\vp-\frac{\b^*}r \vp\big)\big)\Big)\overline{g}.
\end{align}

Now, let $\Omega\subset \R^2_+$ be a bounded open subset containing ${\rm supp}\,\vp$, separated from the boundary of $\R^2_+$, with $\partial \Omega$ being 
sufficiently smooth. 
Equivalently, we prove \eqref{part2} with integration over $\Omega$ replacing $\R^2_+$. For this, abbreviating further
$\psi_1:=\partial_1\vp-\frac{\a^*}{r} \vp$,  $\psi_2:=r^2(\partial_2\vp-\frac{\b^*}{r} \vp)$, so that $\psi_j\in C^\infty_c(\R^2_+)$, it suffices to check that  
\begin{equation}\label{kkk2}
\int_\Omega \psi_1 \overline{\partial_1g} = -\int_\Omega \partial_1\psi_1 \,\overline{g}\quad {\rm and}\quad  
\int_\Omega \psi_2\, \overline{\partial_2g} = -\int_\Omega \partial_2\psi_2 \, \overline{g}.
\end{equation}
But $g\in W_{\a,\b}^\circ(\R^2_+)$ hence, with the aid of Lemma~\ref{lem:weak}, it follows that  $g\in H^1(\Omega)$, where the  $H^1(\Omega)$ 
stands for the Euclidean Sobolev space. Thus, the assumptions needed in Gauss' formula, cf. \cite[D.4,\,p.\,408]{Sch}, are satisfied, hence \eqref{kkk2} holds true. 
This finishes verification of the claim and hence the proof of 
the main part of the theorem. 

It remains to prove that $\mathbb{D}_{0;\a,\b}^\circ=\big(G^\circ_{\a,\b}\big)_F$. We take the form 
$\mathfrak{s}^\circ[f,g]=\langle G^\circ_{\a,\b}f,g\rangle $ 
on the domain $\D\, \mathfrak{s}^\circ=C^\infty_c(\R^2_+)$ and consider its closure $\overline{\mathfrak{s}^\circ}$. We claim that 
\begin{equation}\label{cl}
\overline{\mathfrak{s}^\circ}=\mathfrak{t}_0^\circ.
\end{equation}
This is enough for our purpose since then, with the previous notation, we have
$$
\D\,(G^\circ_{\a,\b})_F=\D\,A_{\overline{\mathfrak{s}^\circ}}=\D\,A_{\mathfrak{t}^\circ_0},
$$
and as one immediately sees, the latter space coincides with $\D\,\mathbb{D}^{\circ}_{0;\a,\b}$. Moreover, it follows that 
$(G^\circ_{\a,\b})_Ff=\mathbb{D}^{\circ}_{0;\a,\b}f$ for $f$ from these joint domains. Returning to \eqref{cl}, we note that it is a consequence of the fact 
that $C^\infty_c(\R^2_+)$ lies densely in $\D\,\mathfrak{t}^{\circ}_0=W_{0;\a,\b}^\circ(\R^2_+)$ and $\mathfrak{t}^{\circ}_0$ is closed. Here are details.  Clearly $\mathfrak{t}^{\circ}_0$ extends $\mathfrak{s}^\circ$ and hence the inclusion $\subset$ follows. For the opposite inclusion let 
$f\in \D\,\mathfrak{t}^{\circ}_0=W_{0;\,\a,\b}^\circ(\R^2_+)$ and take $\{\vp_n\}\subset C^\infty_c(\R^2_+)$ such that $\vp_n\to f$ in $W_{0;\,\a,\b}^\circ(\R^2_+)$. In particular, this means that $\vp_n\to f$ and $\mathfrak{d}^\circ_j \vp_n\to \mathfrak{d}^\circ_{j,{\rm weak}}f$ in $L^2(\R^2_+)$, $j=1,2$. 
We want to show that $f\in \D\,\overline{\mathfrak{s}^\circ}$.

But for this purpose it suffices (see \cite[p.224]{Sch}) to ensure existence of $\{\psi_n\}\subset C^\infty_c(\R^2_+)$ convergent to $f$ in $L^2(\R^2_+)$ 
and such that $\mathfrak{s}^\circ[\psi_n-\psi_m,\psi_n-\psi_m]\to0$ as $n,m\to\infty$. But
$$
\mathfrak{s}^\circ[\psi_n-\psi_m,\psi_n-\psi_m]=\langle G^\circ_{\a,\b}(\psi_n-\psi_m),\psi_n-\psi_m\rangle =
\sum_{j=1}^2\langle \mathfrak{d}^\circ_j(\psi_n-\psi_m), \mathfrak{d}^\circ_j(\psi_n-\psi_m)\rangle 
$$
and the latter required convergence to 0 follows since $\mathfrak{d}^\circ_j\psi_n$, $j=1,2$, being convergent in $L^2(\R^2_+)$, are Cauchy sequences there.
\end{proof}

Finally, we define the minimal and the maximal operators related to $G^\circ_{\a,\b}$. These two operators are important because self-adjoint extensions of 
$G^\circ_{\a,\b}$  lie in between. We follow the well known pattern of construction of these two operators; see e.g. \cite[Sections 1.3.2 and 15.1]{Sch}. 

First, we introduce the following concept. For $f\in L^1_{\rm loc}(\R^2_+)$ we say that the weak  $G^\circ_{\a,\b}$-\textit{derivative} of $f$ exist provided there is $g\in L^1_{\rm loc}(\R^2_+)$ such that
$$
\int_{\R^2_+}G^\circ_{\a,\b}\vp(r,s)\overline{f(r,s)}\,drds=\int_{\R^2_+} \vp(r,s)\overline{g(r,s)}\,drds, \qquad \vp\in C^\infty_c({\R^2_+}).
$$ 
Then we set $G_{\a,\b;{\rm weak}}^{\circ}f:=g$. Next, we define the $G^\circ$-Sobolev space (of order 2)
$$
W_{\a,\b}^{\circ;2}(\R^2_+)=\big\{f\in L^2(\R^2_+)\colon G^\circ_{\a,\b,{\rm weak}}f\,\,{\rm exist}\,\,{\rm and}\,\,{\rm is} \,\,{\rm in}\,\,\,L^2(\R^2_+)\big\}
$$

Define $G^\circ_{\a,\b;{\rm min}}:=\overline{G^\circ_{\a,\b}}$, the closure of $G^\circ_{\a,\b}$, and $G^\circ_{\a,\b;{\rm max}}$, the operator given by 
\begin{equation*}
G^\circ_{\a,\b;{\rm max}}f:=G^\circ_{\a,\b;{\rm weak}}f\quad {\rm for} \quad f\in \D\,G^\circ_{\a,\b;{\rm max}}:=W_{\a,\b}^{\circ;2}(\R^2_+).
\end{equation*}
Notice that  $G^\circ_{\a,\b}$ is closable since $\D\,(G^\circ_{\a,\b})^*$ is dense in $L^2(\R^2_+)$; in addition 
$(G^\circ_{\a,\b})^*=\big(\overline{G^\circ_{\a,\b}}\big)^*$ 

Observe that $G^\circ_{\a,\b;{\rm min}}\subset G^\circ_{\a,\b;{\rm max}}$ because $G^\circ_{\a,\b;{\rm max}}$ is closed. To verify the last claim 
assume that $f,g\in L^2(\R^2_+)$ are such that for some $f_n\in \D\,G^\circ_{\a,\b;{\rm max}}$ we have $f_n\to f$ and $G^\circ_{\a,\b;{\rm weak}}f_n\to g$ 
in $L^2(\R^2_+)$. This means that for every $n$,
$$
\langle G^\circ_{\a,\b}\vp,f_n\rangle=\langle \vp, G^\circ_{\a,\b;{\rm weak}}f_n\rangle,
$$
and passing with $n\to \infty$ gives $\langle G^\circ_{\a,\b}\vp,f\rangle=\langle \vp, g\rangle$, which says that $G^\circ_{\a,\b; {\rm weak}}f$ exists and equals $g$.

\begin{proposition} \label{pro:minmax} 
We have  $(G^\circ_{\a,\b;{\rm min}})^*=G^\circ_{\a,\b;{\rm max}}$ and $(G^\circ_{\a,\b;{\rm max}})^*=G^\circ_{\a,\b;{\rm min}}$. Moreover,  
every self-adjoint extension $S$ of $G^\circ_{\a,\b}$ satisfies  $G^\circ_{\a,\b;{\rm min}}\subset S\subset G^\circ_{\a,\b;{\rm max}}$.
\end{proposition}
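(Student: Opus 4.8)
The plan is to reduce everything to the single identification $(G^\circ_{\a,\b})^*=G^\circ_{\a,\b;{\rm max}}$ and then apply the standard duality calculus for adjoints of densely defined closable operators. First I would unwind the definition of the Hilbert-space adjoint of $G^\circ_{\a,\b}$, whose domain is $C^\infty_c(\R^2_+)$: by definition $f\in\D\,(G^\circ_{\a,\b})^*$ with $(G^\circ_{\a,\b})^*f=h$ precisely when $f\in L^2(\R^2_+)$, $h\in L^2(\R^2_+)$, and $\langle G^\circ_{\a,\b}\vp,f\rangle=\langle\vp,h\rangle$ for every $\vp\in C^\infty_c(\R^2_+)$. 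Writing the inner products as integrals over $\R^2_+$ against $drds$, this is literally the defining relation of the weak $G^\circ_{\a,\b}$-derivative together with the requirements $f,h\in L^2(\R^2_+)$. Hence $\D\,(G^\circ_{\a,\b})^*=W_{\a,\b}^{\circ;2}(\R^2_+)$ and $(G^\circ_{\a,\b})^*=G^\circ_{\a,\b;{\rm max}}$.

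Second, using the already recorded fact $(G^\circ_{\a,\b})^*=\big(\overline{G^\circ_{\a,\b}}\big)^*$ together with $G^\circ_{\a,\b;{\rm min}}=\overline{G^\circ_{\a,\b}}$, the first step gives at once $(G^\circ_{\a,\b;{\rm min}})^*=G^\circ_{\a,\b;{\rm max}}$, which is the first asserted identity. For the second identity I would take one further adjoint: since $G^\circ_{\a,\b}$ is densely defined and closable, $G^\circ_{\a,\b;{\rm min}}$ is closed and densely defined, its adjoint is densely defined, and $(G^\circ_{\a,\b;{\rm min}})^{**}=G^\circ_{\a,\b;{\rm min}}$. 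Applying $*$ to the first identity then yields $(G^\circ_{\a,\b;{\rm max}})^*=(G^\circ_{\a,\b;{\rm min}})^{**}=G^\circ_{\a,\b;{\rm min}}$; note that $G^\circ_{\a,\b;{\rm max}}$ is densely defined because it extends $G^\circ_{\a,\b}$, so its adjoint is legitimate.

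Third, for the sandwiching statement let $S$ be any self-adjoint extension of $G^\circ_{\a,\b}$. Since $S$ is closed and extends $G^\circ_{\a,\b}$, it extends the closure, whence $G^\circ_{\a,\b;{\rm min}}=\overline{G^\circ_{\a,\b}}\subset S$, giving the left inclusion. For the right inclusion I would invoke the inclusion-reversing property of the adjoint: from $G^\circ_{\a,\b}\subset S$ we get $S^*\subset(G^\circ_{\a,\b})^*$, and self-adjointness $S=S^*$ combined with the first step gives $S\subset G^\circ_{\a,\b;{\rm max}}$.

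The only genuinely delicate point is the first step, matching the adjoint-domain description with $W_{\a,\b}^{\circ;2}(\R^2_+)$. One must observe that the test-function class in the definition of the weak $G^\circ_{\a,\b}$-derivative is exactly $\D\,G^\circ_{\a,\b}=C^\infty_c(\R^2_+)$, so no additional density argument is needed, and that the passage from the $L^1_{\rm loc}(\R^2_+)$ framework of the weak-derivative definition to the $L^2(\R^2_+)$ framework of the adjoint is harmless, since both $f$ and $(G^\circ_{\a,\b})^*f$ are required to lie in $L^2(\R^2_+)\subset L^1_{\rm loc}(\R^2_+)$. Everything else is routine adjoint and closure calculus.
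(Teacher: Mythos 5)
Your proposal is correct and follows essentially the same route as the paper: both arguments rest on observing that the adjoint relation tested against $\vp\in C^\infty_c(\R^2_+)$ is literally the definition of the weak $G^\circ_{\a,\b}$-derivative (giving $(G^\circ_{\a,\b;{\rm min}})^*=(G^\circ_{\a,\b})^*=G^\circ_{\a,\b;{\rm max}}$), then obtain the second identity from $(G^\circ_{\a,\b;{\rm min}})^{**}=G^\circ_{\a,\b;{\rm min}}$, the sandwich claim being the standard closure/adjoint-monotonicity argument that the paper leaves as obvious.
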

\begin{proof} We begin with the first equality and prove the inclusion $(G^\circ_{\a,\b;{\rm min}})^*\supset G^\circ_{\a,\b;{\rm max}}$. Let 
$f\in \D\,G^\circ_{\a,\b;{\rm max}}$. This means that $G^\circ_{\a,\b;{\rm weak}}f$ exists and belongs to $L^2(\R^2_+)$. In other words, it holds
\begin{equation*}
\forall \vp\in C^\infty_c(\R^2_+)\quad \langle  G^\circ_{\a,\b} \vp,f\rangle=\langle \vp,G^\circ_{\a,\b;{\rm weak}}f\rangle,
\end{equation*} 
which means that $f\in \D\,(G^\circ_{\a,\b})^*=\D\,\big(\overline{G^\circ_{\a,\b}}\big)^*$ and $G^\circ_{\a,\b;{\rm weak}}f=(G^\circ_{\a,\b})^*f=(\overline{G^\circ_{\a,\b}})^*f$. 

To prove the opposite inclusion, $\subset$, let $f\in\D\,(G^\circ_{\a,\b;{\rm min}})^*$. This means, in particular, that 
$$
\forall\vp\in C^\infty_c(\R^2_+)\quad \langle G^\circ_{\a,\b}\vp,f\rangle=\langle \vp,(G^\circ_{\a,\b;{\rm min}})^*f\rangle.
$$
But this means that,  $G^\circ_{\a,\b; {\rm weak}}f$ exists and equals $(G^\circ_{\a,\b;{\rm min}})^*f$, so $f\in \D\,G^\circ_{\a,\b;{\rm max}}$.

Since $G^\circ_{\a,\b;{\rm min}}$ is closed, we have $(G^\circ_{\a,\b;{\rm min}})^{**}=G^\circ_{\a,\b;{\rm min}}$ and thus the second equality is a consequence of the first one. The last claim of the proposition is obvious.
\end{proof}
An immediate but important consequence of Proposition~\ref{pro:minmax} is that two self-adjoint extensions of $G^\circ_{\a,\b}$ differ if and only if their domains differ. 

We are now prepared to state and prove the second main result of this section.
\begin{theorem} \label{thm:oper2}
Let $\a,\b>-1$. Then $\mathbb{D}_{\a,\b}^\circ=\mathbb{G}_{\a,\b}^\circ$.
\end{theorem}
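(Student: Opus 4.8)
The plan is to prove the identity at the level of the associated sesquilinear forms and then invoke the bijection between closed Hermitian nonnegative forms and nonnegative self-adjoint operators recalled at the beginning of this section. By Theorem~\ref{thm:oper} the form $\mathfrak{t}^\circ$ is closed, Hermitian and nonnegative and $\mathbb{D}_{\a,\b}^\circ=A_{\mathfrak{t}^\circ}$, so $\mathfrak{t}^\circ$ is exactly the form of $\mathbb{D}_{\a,\b}^\circ$. On the other hand, by the functional calculus for $\bbG=\calG_{\a,\b}^{\circ;-1}\circ\bbM^\circ_\a\circ\calG_{\a,\b}^{\circ}$ developed after Theorem~\ref{thm:second} and the unitarity of $\calG_{\a,\b}^{\circ}$ (Corollary~\ref{cor:first}), the form $\mathfrak{s}_{\bbG}$ of $\bbG$ is
\begin{equation*}
\mathfrak{s}_{\bbG}[f,g]=\big\langle(\bbM^\circ_\a)^{1/2}\calG_{\a,\b}^{\circ}f,(\bbM^\circ_\a)^{1/2}\calG_{\a,\b}^{\circ}g\big\rangle_{L^2(\Gamma^\circ)}=\sum_{n=0}^\infty\calka \lambda_n^{\a,\circ}\tau\,\calG_{\a,\b}^{\circ}f(n,\tau)\overline{\calG_{\a,\b}^{\circ}g(n,\tau)}\,d\tau,
\end{equation*}
with $\D\,\mathfrak{s}_{\bbG}=\D\,\bbG^{1/2}=\{f\in L^2(\R^2_+)\colon\sum_n\calka\lambda_n^{\a,\circ}\tau|\calG_{\a,\b}^{\circ}f(n,\tau)|^2\,d\tau<\infty\}$. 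Hence the whole theorem reduces to the single assertion $\mathfrak{t}^\circ=\mathfrak{s}_{\bbG}$; by polarization it suffices to establish the diagonal identity $\mathfrak{t}^\circ[f,f]=\mathfrak{s}_{\bbG}[f,f]$ for every $f$, with the convention that each side is $+\infty$ off its domain, so that equality of values simultaneously yields equality of domains.

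To compute $\mathfrak{t}^\circ[f,f]$ I would pass to the partial Hankel transform in the second variable: for $f\in W_{\a,\b}^\circ(\R^2_+)$ set $\hat f(r,\tau)=\calH_\b^\circ f_r(\tau)$. Since $\mathfrak{d}_1^\circ=\partial_r-\tfrac{\a+1/2}r$ differentiates only in $r$, it commutes with $\calH_\b^\circ$; for $\mathfrak{d}_2^\circ=r(\partial_s-\tfrac{\b+1/2}s)$ one uses the one-dimensional identity $\int_0^\infty|(\partial_s-\tfrac{\b+1/2}s)h|^2\,ds=\calka\tau^2|\calH_\b^\circ h(\tau)|^2\,d\tau$, the gradient form of the Bessel operator $B_\b^\circ$ intertwined by $\calH_\b^\circ$ as in \eqref{2.1}. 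Using unitarity of $\calH_\b^\circ$ in the $s\leftrightarrow\tau$ variable together with Tonelli's theorem, this should give
\begin{equation*}
\mathfrak{t}^\circ[f,f]=\calka\Big(\calka\big|(\partial_r-\tfrac{\a+1/2}r)\hat f(r,\tau)\big|^2\,dr+\tau^2\calka r^2|\hat f(r,\tau)|^2\,dr\Big)\,d\tau,
\end{equation*}
where the inner expression is precisely the quadratic form of the scaled Laguerre operator $L_{\a,\tau}^\circ=B_\a^\circ+\tau^2r^2$ evaluated at the fibre $g=\hat f(\cdot,\tau)$. Expanding $\hat f(\cdot,\tau)$ in the orthonormal eigenbasis $\{\ell^{\a,\circ}_{n,\tau}\}_n$ of $L_{\a,\tau}^\circ$, whose coefficients are $\langle\hat f(\cdot,\tau),\ell^{\a,\circ}_{n,\tau}\rangle=\calG_{\a,\b}^{\circ}f(n,\tau)$ by \eqref{3.4a}, the fibre form equals $\sum_n\lambda_n^{\a,\circ}\tau|\calG_{\a,\b}^{\circ}f(n,\tau)|^2$; integrating in $\tau$ returns $\mathfrak{s}_{\bbG}[f,f]$, and read in both directions this also matches the domains $W_{\a,\b}^\circ(\R^2_+)=\D\,\bbG^{1/2}$, since each side is finite exactly when the other is.

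The main obstacle is the fibrewise one-dimensional identity
\begin{equation*}
\calka\big|(\partial_r-\tfrac{\a+1/2}r)g\big|^2\,dr+\tau^2\calka r^2|g|^2\,dr=\sum_{n=0}^\infty\lambda_n^{\a,\circ}\tau\,|\langle g,\ell^{\a,\circ}_{n,\tau}\rangle|^2,
\end{equation*}
together with the matching claim that $g$ has weak $\mathfrak{d}_1^\circ$-derivative in $L^2$ with $rg\in L^2$ \emph{if and only if} the right-hand series converges, for almost every $\tau$. For $|\a|\ge1$ this is immediate, since then $L_\a$ (hence $L_{\a,\tau}^\circ$) is essentially self-adjoint and the delta-derivative form and the Laguerre spectral form share the core $C^\infty_c(0,\infty)$ on which they agree by \eqref{3.2} and integration by parts. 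The delicate case is $-1<\a<1$ (and, via the analogous statement for $B_\b^\circ$, also $-1<\b<1$), where $L_{\a,\tau}^\circ$ is not essentially self-adjoint: one must verify that the \emph{maximal} weak-$\mathfrak{d}_1^\circ$-derivative domain selects exactly the Laguerre extension with no boundary contribution at $r=0$, i.e. that this extension corresponds to the maximal delta-form domain rather than to the Friedrichs domain $W_{0;\a,\b}^\circ$. This is precisely what forces $\bbG=\mathbb{D}_{\a,\b}^\circ$ and not $\mathbb{D}_{0;\a,\b}^\circ$, and it is the step requiring genuine one-dimensional analysis: a controlled integration by parts accounting for the behaviour of $g$ and $g'$ near the origin, or a direct proof that $\{\ell^{\a,\circ}_{n,\tau}\}$ spans the full form domain. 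The remaining points — commuting the weak $\mathfrak{d}_1^\circ$-derivative past $\calH_\b^\circ$, the measurability of $\tau\mapsto\hat f(\cdot,\tau)$ in the fibre spaces, and the Tonelli--Fubini exchanges — are routine once this identity is secured.
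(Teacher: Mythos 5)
Your strategy --- reducing the operator identity to the form identity $\mathfrak{t}^\circ=\mathfrak{s}_{\bbG}$ and then diagonalizing fibrewise in $\tau$ --- is genuinely different from the paper's proof, and it is viable in principle; but as written it has a real gap, which you locate yourself and then do not close. The fibrewise identity
\begin{equation*}
\calka\Big|\Big(\partial_r-\tfrac{\a+1/2}r\Big)g\Big|^2dr+\tau^2\calka r^2|g|^2dr
=\sum_{n=0}^\infty\lambda_n^{\a,\circ}\tau\,|\langle g,\ell^{\a,\circ}_{n,\tau}\rangle|^2,
\end{equation*}
\emph{with matching of domains}, is not a technical afterthought when $0<|\a|<1$: by Lemma~\ref{lem:app1} the operator $L^\circ_{\a,\tau}$ on $C^\infty_c(0,\infty)$ is then not essentially self-adjoint (deficiency indices $(1,1)$), so there is a one-parameter family of self-adjoint extensions, distinguished by the admissible mixtures of $r^{\a+1/2}$ and $r^{1/2-\a}$ behaviour at the origin, and your identity is exactly the assertion that the operator associated with the maximal delta form is the extension diagonalized by $\{\ell^{\a,\circ}_{n,\tau}\}_n$ and no other member of that family. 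One containment is manageable: the spectral form domain embeds in the maximal delta-form domain because the two forms agree on the span of the eigenfunctions (the boundary terms vanish since $\ell^{\a,\circ}_{n,\tau}\sim r^{\a+1/2}$ and $\mathfrak{d}^\circ_1\ell^{\a,\circ}_{n,\tau}=O(r^{\a+3/2})$ at the origin) and the delta form is closed. But the reverse containment --- that every $g$ with $\mathfrak{d}^\circ_{1,{\rm weak}}g\in L^2(0,\infty)$ and $rg\in L^2(0,\infty)$ lies in the spectral form domain, with equality of the two quantities --- is precisely the ``genuine one-dimensional analysis'' you defer, and it is the entire content of the theorem in its one-dimensional form. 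The same unproven statement recurs in your Hankel step: the identity $\calka|(\partial_s-\tfrac{\b+1/2}s)h|^2ds=\calka\tau^2|\calH^\circ_\b h(\tau)|^2d\tau$ on the maximal weak-derivative domain requires the analogous boundary analysis at $s=0$ when $0<|\b|<1$. Since Theorem~\ref{thm:second} shows that the extensions attached to $\pm\a$ (and $\pm\b$) genuinely differ, no soft argument can decide which extension the maximal form selects; something in the proof must see the boundary behaviour (for instance the computation $\mathfrak{d}^\circ_{1;\a}\,r^{1/2-\a}=-2\a\,r^{-1/2-\a}\notin L^2$ near $0$ for $\a>0$), and your proposal never carries this out.

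It is worth seeing why the paper does not face this problem. The paper never proves equality of forms; it works at the operator level, where an inclusion is as good as an equality: if $S_1\subset S_2$ and both are self-adjoint, then $S_1=S_2$. Combined with Proposition~\ref{pro:minmax}, which places both $\mathbb{D}^\circ_{\a,\b}$ and $\bbG$ between the minimal and maximal operators, the whole theorem collapses to the single domain inclusion $\D\,\mathbb{D}^\circ_{\a,\b}\subset\D\,\bbG$, and this is obtained by identifying $\D\,\bbG$ with $W^{\circ;2}_{\a,\b}(\R^2_+)$ and then testing the defining property of the associated operator against $\vp\in C^\infty_c(\R^2_+)$ only, re-using the integration-by-parts identity from the proof of Theorem~\ref{thm:oper}. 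Your form-level reduction cannot be made one-sided in this way: closed nonnegative forms, unlike self-adjoint operators, do admit proper closed extensions --- witness $\mathfrak{t}^\circ_0\subset\mathfrak{t}^\circ$ in this very section --- so knowing, say, $\mathfrak{s}_{\bbG}\subset\mathfrak{t}^\circ$ would not force equality. That built-in asymmetry is what makes your route strictly harder than the paper's, and the missing fibrewise lemma is exactly where all of the extra difficulty is concentrated.
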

\begin{proof} 
We shall prove the inclusion $\mathbb{D}_{\a,\b}^\circ\subset \mathbb{G}_{\a,\b}^\circ$. This is enough due to the well-known maximality property of 
self-adjoint operators: if $S_1$ and $S_2$ are self-adjoint in $(H,\langle\cdot,\cdot\rangle)$ and $S_1\subset S_2$, then $S_1=S_2$, see e.g. \cite[p.\,42]{Sch}. 
In fact, by Proposition~\ref{pro:minmax}, it suffices to check that 
\begin{equation}\label{kkk}
\D\,\mathbb{D}_{\a,\b}^\circ\subset \D\,\mathbb{G}_{\a,\b}^\circ.
\end{equation}
To do this we first reinterpret  $\D\,\mathbb{G}_{\a,\b}^\circ$ defined in terms of the $\calG^\circ$ transform. Recall that $\D\,\mathbb{G}_{\a,\b}^\circ$ 
consists of all $f\in L^2(\R^2_+)$ such than $\Theta_\a \calG_{\a,\b}^\circ f\in L^2(\Gamma^\circ)$, see Section~\ref{sec:self}. We claim that 
$$
\D\,\mathbb{G}_{\a,\b}^\circ=W_{\a,\b}^{\circ;2}(\R^2_+).
$$
Indeed, for $\supset$ take $f\in W_{\a,\b}^{\circ;2}(\R^2_+)$, so that $f\in L^2(\R^2_+)$, $G_{\a,\b;{\rm weak}}^{\circ}f$ exists and is in 
$L^2(\R^2_+)$, i.e., 
$$
\forall\vp\in C^\infty_c(\R^2_+)\quad \langle G_{\a,\b}^\circ \vp, f\rangle = \langle \vp, G_{\a,\b;{\rm weak}}^\circ f\rangle .
$$
Now, the equivalent version of Plancherel's identity \eqref{3.7} from Theorem~\ref{thm:first} implies that also
$$
\langle \calG_{\a,\b}^\circ(G_{\a,\b}^\circ \vp), \calG_{\a,\b}^\circ f\rangle_{L^2(\Gamma^\circ)}
= \langle \calG_{\a,\b}^\circ \vp, \calG_{\a,\b}^\circ (G_{\a,\b;{\rm weak}}^\circ f)\rangle_{L^2(\Gamma^\circ)}.
$$
But, see \eqref{3.2},
$$
\langle \calG_{\a,\b}^\circ(G_{\a,\b}^\circ \vp), \calG_{\a,\b}^\circ f\rangle_{L^2(\Gamma^\circ)}
= \langle \Theta_\a \calG_{\a,\b}^\circ\vp, \calG_{\a,\b}^\circ f\rangle_{L^2(\Gamma^\circ)}
= \langle \calG_{\a,\b}^\circ\vp, \Theta_\a \calG_{\a,\b}^\circ f\rangle_{L^2(\Gamma^\circ)},
$$
hence $\calG_{\a,\b}^\circ (G_{\a,\b;{\rm weak}}^\circ f)=\Theta_\a \calG_{\a,\b}^\circ f\in L^2(\Gamma^\circ)$, so $f\in\D\,\mathbb{G}_{\a,\b}^\circ$. 
For $\subset$, let $f\in \D\,\mathbb{G}_{\a,\b}^\circ$, so that $f\in L^2(\R^2_+)$ and 
$\Theta_\a \calG_{\a,\b}^\circ f\in L^2(\Gamma^\circ)$. Setting $h:= \calG_{\a,\b}^{\circ;-1} (\Theta_\a \calG_{\a,\b}^\circ)f$, so that  
$h\in L^2(\R^2_+)$, it is easily seen, simply by repeating the previous arguments but in the opposite order, that $G_{\a,\b;{\rm weak}}^\circ f$ exists and equals $h$, hence $f\in W_{\a,\b}^{\circ;2}(\R^2_+)$. This finishes proving our claim and thus \eqref{kkk} reduces to 
$$
\D\,\mathbb{D}_{\a,\b}^\circ\subset W_{\a,\b}^{\circ;2}(\R^2_+).
$$ 
But this was essentially done in the proof of Theorem~\ref{thm:oper}. If fact, $f\in \D\,\mathbb{D}_{\a,\b}^\circ$ means that $f\in W_{\a,\b}^\circ(\R^2_+)$ 
and there is $u_f\in L^2(\R^2_+)$ such that for every $g\in W_{\a,\b}^\circ(\R^2_+)$ we have $\mathfrak{t}^{\circ}[f,g]=\langle u_f,g\rangle $. 
In particular, taking for $g$ functions from $C^\infty_c(\R^2_+)$, we obtain 
$$
\forall \vp\in C^\infty_c(\R^2_+)\qquad\int_{\R^2_+} \sum_{j=1}^2\mathfrak{d}_{j}^\circ \vp \overline{\mathfrak{d}_{j,{\rm weak}}^\circ f}
=\langle \vp,u_f\rangle.
$$
But the left-hand side of the above equality equals $\langle G^\circ_{\a,\b}\vp,f\rangle $, see the proof of \eqref{part}, which means that 
$G^\circ_{\a,\b;{\rm weak}}f$ exists and equals $u_f$. In other words, $f\in W_{\a,\b}^{\circ;2}(\R^2_+)$.
\end{proof}

\section{Appendix} \label{sec:app}  

\subsection{The $G_{\a,\b}$ framework} \label{ssec:fra}
In this subsection we comment on explicit forms of notions and objects associated to the $G_{\a,\b}$ framework which correspond to those defined in 
Sections~\ref{sec:Gru}--\ref{sec:self2} in the $G^\circ_{\a,\b}$ setting. Since $G_{\a,\b}=U_{\a,\b}^{-1}\circ G^\circ_{\a,\b}\circ   U_{\a,\b}$, these notions are appropriately related.

A simple computation shows that  the unitary isomorphism 
$$
U_{\a,\b}\colon L^2(d\mu_{\a,\b})\to L^2\big(\R^2_+\big),  \qquad U_{\a,\b}f= r^{\a+1/2}s^{\b+1/2}f,
$$
intertwines the operators $G_{\a,\b}$ and $G_{\a,\b}^\circ$ in the sense that the diagram
\newlength\SRR
 \settowidth\SRR{$\calS(d\mu_{\a,\b})$}
  \newlength\SetaCC
 \settowidth\SetaCC{$\calS_\eta(C_+)$}
 
  \[
 \xymatrix{
\makebox[\SRR][r]{$L^2(d\mu_{\a,\b}) \supset \D\,G_{\a,\b}$}
\ar@{->}[rr]^{\displaystyle G_{\a,\b}} 
\ar@{->}[dd]_{\displaystyle  U_{\a,\b}}
      && \makebox[\SRR][l]{$L^2(d\mu_{\a,\b}   )$}\ar@{->}[dd]^{\displaystyle U_{\a.\b}}    \\ \\
 \makebox[\SetaCC][r]{$L^2\ \big(\R^2_+\big)\supset \D\,G_{\a,\b}^\circ$} \ar@{->}[rr]_{\displaystyle G_{\a,\b}^\circ} && \makebox[\SetaCC][l]{$ L^2\big(\R^2_+\big)$} 
 }
 \]
commutes; recall that 
$$
\D\,G_{\a,\b}=C^\infty_c\big(\R^2_+\big)=\D\,G_{\a,\b}^\circ,
$$
so 
$U_{\a,\b}(\D\,G_{\a,\b})=\D\,G_{\a,\b}^\circ$. 
Thus, the spectral properties of $G_{\a,\b}$ and $G_{\a,\b}^{\circ}$ are the same. Notably, by Proposition~\ref{pro:zero},   if $|\a|\ge 1$, then $G_{\a,\b}^\circ$ is \esa. 

First, the $\calG$-transform is given analogously as in \eqref{3.1}, simply by dropping the `$\circ$' symbol on both sides of \eqref{3.1}. 
Such dropping is tacitly assumed in all places below when we say `analogously/analogous', and, if necessary, we also replace $L^2(\R^2_+)$ 
by $L^2(d\mu_{\a,\b})$; this will be not repeated throughout these comments. The transform $\calG_{\a,\b}$ satisfies the properties analogous to these in 
\eqref{3.2} and \eqref{3.5}, and the action of $\calG_{\a,\b}$ on $L^2(d\mu_{\a,\b})$ is next given analogously to \eqref{3.4a}. Further, 
$L^2(\Gamma_\a):=L^2(\N\times(0,\infty),dn\times d\mu_\a)$ is defined accordingly. With the inverse transform $\calG_{\a,\b}^{-1}$ defined analogously 
to $\calG_{\a,\b}^{\circ;-1}$,  results parallel to Theorem~\ref{thm:first} and Corollary~\ref{cor:first} follow. 

Next, for the self-adjoint extensions of $G_{\a,\b}$, the multiplication operator $\bbM_\a$ and the self-adjoint extension $\mathbb{G}_{\a,\b}$ of 
$G_{\a,\b}$ are defined analogously to $\bbM^\circ_\a$ and $\mathbb{G}^\circ_{\a,\b}$, and the result parallel to Theorem~\ref{thm:second} holds. 
Moreover, the heat semigroup $\{\exp(-t\mathbb{G}_{\a,\b})\}_{t>0}$ for $\mathbb{G}_{\a,\b}$ consists of the integral operators 
\begin{equation*}
\exp(-t\mathbb{G}_{\a,\b})f(r,s)=\calka\calka K_{t;\a,\b}\big((r,s),(u,v)\big)f(u,v)\,d\mu_{\a,\b}(u,v), \qquad f\in L^2(d\mu_{\a,\b}),
\end{equation*}
with kernels 
$$
K_{t;\a,\b}\big((r,s),(u,v)\big)=(ru)^{-\a-1/2}(sv)^{-\b-1/2}K_{t;\a,\b}^\circ\big((r,s),(u,v)\big);
$$
this follows from $\exp(-t\mathbb{G}_{\a,\b})=U_{\a,\b}^{-1}\circ \exp(-t\mathbb{G}^\circ_{\a,\b})\circ U_{\a,\b}$. Explicitly, 
$$
K_{t;\a,\b}\big((r,s),(u,v)\big)=\calka \frac{J_\b(\tau s)}{(\tau s)^\b}\frac{J_\b(\tau v)}{(\tau v)^\b}\exp\Big(-\frac12\frac{\tau(r^2+u^2)}{\tanh 2t\tau}\Big)
\frac{I_\a\big(\frac{\tau ru}{\sinh 2t\tau}\big)}{\big(\frac{\tau ru}{\sinh 2t\tau}\big)^\a }\Big(\frac{\tau}{\sinh 2t\tau}\Big)^{\a+1}\,d\mu_\b(\tau).
$$
Using $J_\b(y)/y^\b\to 2^{\b}\Gamma(\b+1)$ and $I_\a(y)/y^\a\to 2^{\a}\Gamma(\a+1)$ as $y\to 0^+$, for fixed $(r,s)\in\R^2_+$ we can continuously extend 
$K_{t;\a,\b}\big((r,s),(\cdot,\cdot)\big)$ onto the closure $\overline{\R^2_+}$. For instance, for $(u,v)=(0,0)$ we get
$$
K_{t;\a,\b}\big((r,s),(0,0)\big)=2^{\a+\b}\Gamma(\a+1)\Gamma(\b+1)\calka \frac{J_\b(\tau s)}{(\tau s)^\b}\exp\Big(-\frac12\frac{\tau r^2}{\tanh 2t\tau}\Big)
\Big(\frac{\tau}{\sinh 2t\tau}\Big)^{\a+1}\,d\mu_\b(\tau).
$$
Notice also the homogeneity property of the heat kernel expressed by
$$
K_{t;\a,\b}\big((r,s),(u,v)\big)=t^{-(\a+2\b+3)}K_{1;\a,\b}\Big(\Big(\frac r{\sqrt t},\frac st\Big),\Big(\frac u{\sqrt t},\frac vt\Big)\Big),
$$
which is directly seen, but in fact it is a consequence of a homogeneity property of $\mathbb G_{\a,\b}$ inherited from  that for $G_{\a,\b}$, 
cf. Section~\ref{ssec:Grushin}. 

Finally, for self-adjoint extensions of $G_{\a,\b}$ in terms of  sesquilinear forms, the decomposition
$$
G_{\a,\b}=\mathfrak{d}_1^{\dagger} \mathfrak{d}_1+\mathfrak{d}_2^{\dagger} \mathfrak{d}_2
$$
holds with 
$$ 
\mathfrak{d}_1:=\frac{\partial}{\partial r},\qquad \mathfrak{d}_1^{\dagger}:=-\frac{\partial}{\partial r}+\frac{2\a+1}r,
$$
and 
$$ 
\mathfrak{d}_2:=r\frac{\partial}{\partial s},\qquad \mathfrak{d}_2^{\dagger}:=r\Big(-\frac{\partial}{\partial s}+\frac{2\b+1}s\Big),
$$ 
each with $C^\infty_c(\R^2_+)\subset L^2(d\mu_{\a,\b})$ as domain; $\mathfrak{d}_1^{\dagger}$ and $\mathfrak{d}_2^{\dagger}$, are the restrictions of 
the adjoint operators $\mathfrak{d}_1^*$ and $\mathfrak{d}_2^*$ to $C^\infty_c(\R^2_+)$. Consequently, the concept of the weak partial derivatives expressed 
in Definition \ref{def:weak} is analogous and requires, additionally, replacing  $dr\,ds$ by $d\mu_{\a,\b}(r,s)$. Lemma~\ref{lem:weak} then has its analogous 
counterpart, with $-\frac{\a+1/2}r$ replaced by $\frac{2\a+1}r$ and with similar replacement for $-\frac{\b+1/2}s$. Moreover, Definition \ref{def:sob} 
of the delta-Sobolev space has also its analogous counterpart, the operators $\mathbb{D}_{\a,\b}$ and $\mathbb{D}_{0;\a,\b}$ are defined analogously, 
and the analogue of Theorem~\ref{thm:oper} takes place. Also, with an analogous definition of the minimal and maximal operators, the analogue of 
Proposition~\ref{pro:minmax} holds.

\subsection{A reminder on the spectral theory of multiplication operator} \label{ssec:mult}
Let $X$ be a set  with a $\sigma$-algebra $\Xi$ of subsets, and let $\mu$ be a measure on $(X,\Xi)$. 
For a measurable function $\Theta\colon X\to\C$ consider the \textit{multiplication operator} (with `maximal' domain)  on $L^2(X,\mu)$,  
\begin{align*}
\D\, M_\Theta&=\{f\in L^2(X,\mu)\colon \Theta f\in L^2(X,\mu)\},\\
M_\Theta f&=\Theta f,\qquad f\in \D\, M_\Theta.
\end{align*}
Then the following has a straightforward proof.
\begin{lemma} \label{lem:spe}
The multiplication operator $M_\Theta$ defined  above is:

\noindent a) densely defined closed and normal (which means $M_\Theta M_\Theta^*= M_\Theta^* M_\Theta$);

\noindent b) self-adjoint if and only if $\Theta$ is real-valued; 

\noindent c) nonnegative  if and only if $\Theta$ is nonnegative;

\noindent d) bounded  if and only if $\Theta$ is bounded.

Moreover the spectrum $\sigma(M_\Theta)$ coincides with the essential range of $\Theta$.
\end{lemma}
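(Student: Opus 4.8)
The plan is to prove the five assertions more or less independently, using throughout the truncation sets $X_n=\{x\in X\colon |\Theta(x)|\le n\}$, whose union is $X$ up to a $\mu$-null set. For part a), density of $\D\,M_\Theta$ follows by approximating an arbitrary $f\in L^2(X,\mu)$ by the truncations $f\chi_{X_n}\in\D\,M_\Theta$, which converge to $f$ by dominated convergence. Closedness is the usual pointwise-a.e.\ argument: if $f_k\to f$ and $\Theta f_k\to g$ in $L^2(X,\mu)$, a subsequence converges $\mu$-a.e., forcing $g=\Theta f$ a.e., so $f\in\D\,M_\Theta$ and $M_\Theta f=g$. The heart of a) is the identification $M_\Theta^*=M_{\overline\Theta}$: the inclusion $M_{\overline\Theta}\subset M_\Theta^*$ is read off from the definition of the adjoint, while for the reverse inclusion I would take $h\in\D\,M_\Theta^*$ with $M_\Theta^*h=h^*$ and test the identity $\langle\Theta f,h\rangle=\langle f,h^*\rangle$ against $f=\chi_E$ for finite-measure subsets $E\subset X_n$, concluding that $h^*=\overline\Theta\,h$ a.e.\ and hence $\overline\Theta\,h\in L^2(X,\mu)$. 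Granting this, normality is immediate, since both $M_\Theta^*M_\Theta$ and $M_\Theta M_\Theta^*$ coincide with $M_{|\Theta|^2}$ on the common domain $\{f\colon |\Theta|^2f\in L^2(X,\mu)\}$.

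Parts b)--d) then fall out quickly. For b), self-adjointness means $M_\Theta=M_\Theta^*=M_{\overline\Theta}$, which by comparing actions holds precisely when $\Theta=\overline\Theta$ $\mu$-a.e., i.e.\ $\Theta$ is real-valued. For c), the identity $\langle M_\Theta f,f\rangle=\int_X\Theta|f|^2\,d\mu$ makes the implication $\Theta\ge0\Rightarrow M_\Theta\ge0$ transparent; conversely, if $\Theta$ fails to be nonnegative on a set of positive measure, I would choose a finite-measure subset $E$ of $\{x\colon \Theta(x)\notin[0,\infty)\}$ and test with $\chi_E$, extracting from $\int_E\Theta\,d\mu\ge0$ (for all such $E$) first that $\Theta$ is real-valued a.e.\ and then that it is nonnegative a.e. For d), boundedness with $\|\Theta\|_{L^\infty}<\infty$ gives $\|M_\Theta f\|\le\|\Theta\|_{L^\infty}\|f\|$; conversely, if $\|M_\Theta\|=C<\infty$ but $\mu(\{|\Theta|>C\})>0$, testing with $\chi_E$ on a finite-measure subset $E$ of that set contradicts the operator bound.

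For the final assertion I would show the two inclusions between $\sigma(M_\Theta)$ and the essential range $\mathrm{ess\,ran}\,\Theta=\{\lambda\in\C\colon \mu(\{|\Theta-\lambda|<\varepsilon\})>0\ \text{for all}\ \varepsilon>0\}$ separately. If $\lambda\notin\mathrm{ess\,ran}\,\Theta$, then $|\Theta-\lambda|\ge\delta$ $\mu$-a.e.\ for some $\delta>0$, so $(\Theta-\lambda)^{-1}$ is essentially bounded and $M_{1/(\Theta-\lambda)}$ is a bounded two-sided inverse of $M_\Theta-\lambda I$, placing $\lambda$ in the resolvent set. If instead $\lambda\in\mathrm{ess\,ran}\,\Theta$, then each set $E_n=\{|\Theta-\lambda|<1/n\}$ has positive, and (after intersecting with some $X_m$) finite, measure; the normalized functions $f_n=\chi_{E_n}/\|\chi_{E_n}\|$ satisfy $\|(M_\Theta-\lambda I)f_n\|\le 1/n\to0$, so $M_\Theta-\lambda I$ is not bounded below and $\lambda\in\sigma(M_\Theta)$.

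I expect the main obstacle to be the reverse inclusion $M_\Theta^*\subset M_{\overline\Theta}$ in part a), together with the recurring measure-theoretic need to produce finite-measure test sets inside sets where $\Theta$ is large or misbehaves; both rely on $\sigma$-finiteness of $\mu$, which is harmless here since the only instances used are counting measure on $\N$ and Lebesgue measure, each $\sigma$-finite.
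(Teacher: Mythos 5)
Your proof is correct. Note that the paper does not actually supply a proof of this lemma: it appears in the appendix with only the remark that it ``has a straightforward proof,'' implicitly deferring to standard references such as \cite{Sch}. Your argument is precisely that standard proof: truncation sets $X_n=\{x\in X\colon |\Theta(x)|\le n\}$ for density of the domain, the a.e.-subsequence argument for closedness, the identification $M_\Theta^*=M_{\overline\Theta}$ as the key step, and the two inclusions for the spectrum via essential boundedness of $1/(\Theta-\lambda)$ off the essential range and the near-eigenfunctions $\chi_{E_n}/\|\chi_{E_n}\|$ on it. One point is worth emphasizing: you correctly flag that the adjoint identification and the various test-set arguments need $\sigma$-finiteness (or at least semifiniteness) of $\mu$, a hypothesis the paper's statement omits. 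This is not cosmetic --- for a completely general measure the last assertion can fail (take $X$ a single atom of infinite mass, so that $L^2(X,\mu)=\{0\}$ and the spectrum is empty while the essential range is not). Since the lemma is invoked only for $\ell^2(\N)$, $L^2(d\mu_\a)$ and $L^2(\Gamma^\circ)$, all $\sigma$-finite, the gap in the statement is harmless in context, exactly as you observe.
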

For a real-valued multiplier function we use the $\mathbb{M}$ symbol to emphasize that the corresponding multiplier operator is self-adjoint.
 
One of versions of the spectral theorem states that every self-adjoint operator $S$ on a Hilbert space $H$ is unitarily 
equivalent to a multiplication operator by a real-valued function. More precisely, given such $S$, there is a measure space $(X,\Xi,\mu)$, a real-valued measurable function $\Theta$ on $X$ and a unitary isomorphism $U\colon H\to L^2(X,\mu)$, such that $S=U^{-1}\circ \mathbb{M}_\Theta\circ U$, which in particular means  
$\D\,S=\D(U^{-1}\circ \mathbb{M}_\Theta\circ U)=\{h\in H\colon Uh\in \D\,\mathbb{M}_\Theta\}$.

An important ingredient of the spectral theory of self-adjoint operators on Hilbert spaces is the functional calculus. For a multiplication operator 
it has a simple description. Namely, with notation as above and assuming $\Theta$ to be real-valued, the functional calculus  is understood as the 
mapping $\Phi\to M_{\Phi\circ\Theta}$ assigning to any Borel function $\Phi\colon \sigma(\mathbb{M}_\Theta)\to\C$ the operator 
$\Phi(\mathbb{M}_{\Theta})=M_{\Phi\circ\Theta}$  on $L^2(X,\mu)$, i.e. the multiplication operator by $\Phi\circ\Theta$.

\subsection{Unitary equivalences between the Grushin-type operators} \label{ssec:equi} 
We now point out that the set of type indices $\a,\b\in\R$ in the consideration of the Grushin-type operators can be naturally reduced. 

In the $G_{\a,\b}$ framework, let $V_{\a,\b}$, $\a,\b\in\R$, be the operator of multiplication  by  $r^{-2\alpha}s^{-2\beta}$, i.e. 
$$
V_{\a,\b}f(r,s)=r^{-2\alpha}s^{-2\beta}f(r,s).
$$
Then 
$$
V_{\a,\b}\colon L^2(d\mu_{-\a,-\b})\to L^2(d\mu_{\a,\b})
$$
is a unitary isomorphism. Moreover,
$$
V_{\a,0}\colon L^2(d\mu_{-\a,\b})\to L^2(d\mu_{\a,\b})\quad {\rm and}\quad  V_{0,\b}\colon L^2(d\mu_{\a,-\b})\to L^2(d\mu_{\a,\b}) 
$$
are unitary isomorphisms. Notice that, obviously,  $V_{\a,\b}\big(C^\infty_c(\R^2_+) \big)=C^\infty_c(\R^2_+)$.
\begin{proposition} \label{pro:first}
Let $\a,\b\in\R$. We have
\begin{equation*}
G_{\a,\b}\circ V_{\a,\b}=V_{\a,\b}\circ G_{-\a,-\b}.
\end{equation*}
 Similarly, 
\begin{equation*}
G_{\a,\b}\circ V_{\a,0}=V_{\a,0}\circ G_{-\a,\b},\qquad G_{\a,\b}\circ V_{0,\b}=V_{0,\b}\circ G_{\a,-\b}. 
\end{equation*}
\end{proposition}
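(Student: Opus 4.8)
The plan is to exploit the product structure shared by $G_{\a,\b}$ and the multiplier $V_{\a,\b}$. Writing $G_{\a,\b}=B_{1,\a}+r^2B_{2,\b}$ as in Section~\ref{ssec:Grushin}, and noting that $V_{\a,\b}$ factors as the composition of the two commuting multiplications $M_{1,\a}$ by $r^{-2\a}$ and $M_{2,\b}$ by $s^{-2\b}$, the two-variable identities will reduce to a single one-variable intertwining relation for the Bessel operator. Since $V_{\a,\b}$, $G_{\a,\b}$ and $G_{-\a,-\b}$ all preserve $C^\infty_c(\R^2_+)$, it suffices to verify the stated operator identities pointwise on $C^\infty_c(\R^2_+)$.

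The heart of the argument is the one-variable identity
$$
B_\a(r^{-2\a}g)=r^{-2\a}B_{-\a}g,\qquad g\in C^\infty_c(0,\infty),
$$
which I would establish by a direct computation using the divergence form $B_\a=r^{-(2\a+1)}\big(-\frac{d}{dr}\big(r^{2\a+1}\frac{d}{dr}\big)\big)$. One computes $r^{2\a+1}\frac{d}{dr}(r^{-2\a}g)=-2\a g+rg'$; differentiating this and multiplying by $-r^{-(2\a+1)}$ yields $(2\a-1)r^{-2\a-1}g'-r^{-2\a}g''$, which is precisely $r^{-2\a}\big(-g''+\frac{2\a-1}{r}g'\big)=r^{-2\a}B_{-\a}g$, since replacing $\a$ by $-\a$ turns the coefficient $2\a+1$ of the first-order term into $1-2\a$, whose negative is $2\a-1$. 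In operator form this reads $B_\a\circ M_\a=M_\a\circ B_{-\a}$, and the analogous relation holds for the $s$-variable.

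With this identity available, I would assemble the first relation by handling the two summands of $G_{\a,\b}$ separately. Because $M_{2,\b}$ acts only in $s$, it commutes with $B_{1,\a}$ and $M_{1,\a}$, so $B_{1,\a}M_{1,\a}M_{2,\b}=M_{1,\a}M_{2,\b}B_{1,-\a}$; likewise $M_{1,\a}$ and the multiplier $r^2$ commute with $B_{2,\b}$ and $M_{2,\b}$, so $r^2B_{2,\b}M_{1,\a}M_{2,\b}=M_{1,\a}M_{2,\b}\,r^2B_{2,-\b}$. Adding the two pieces gives
$$
G_{\a,\b}\,V_{\a,\b}=M_{1,\a}M_{2,\b}\big(B_{1,-\a}+r^2B_{2,-\b}\big)=V_{\a,\b}\,G_{-\a,-\b}.
$$
The remaining two relations are obtained identically, transforming only one variable: for $G_{\a,\b}\circ V_{\a,0}=V_{\a,0}\circ G_{-\a,\b}$ one uses $V_{\a,0}=M_{1,\a}$ and applies the one-variable identity only in $r$, leaving $B_{2,\b}$ untouched, and symmetrically for $G_{\a,\b}\circ V_{0,\b}=V_{0,\b}\circ G_{\a,-\b}$.

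There is no serious obstacle here; the argument is elementary. The only points requiring care are the bookkeeping of which factors commute with which (valid because the $r$- and $s$-operators act on disjoint variables), and the sign check showing that the Bessel first-order coefficient produced by the conjugation matches exactly the one obtained by the substitution $\a\mapsto-\a$. The same verification could be carried out from the non-divergence form of $B_\a$ at the cost of a marginally longer product-rule expansion.
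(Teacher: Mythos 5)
Your proof is correct and is essentially the paper's own argument: the paper disposes of this proposition with the words ``Straightforward calculation,'' and your conjugation identity $B_\a\circ M_\a=M_\a\circ B_{-\a}$ together with the variable-separation bookkeeping is exactly that calculation, carried out explicitly and with the signs checked correctly.
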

\begin{proof} Straighforward calculation.
\end{proof}
From the above proposition it follows that given $\a,\b\in\R$, each of the operators $G_{-\a,-\b}$, $G_{-\a,\b}$ and $G_{\a,-\b}$, is unitarily equivalent 
to $G_{\a,\b}$. Consequently, to perform an analysis of operators $G_{\a,\b}$ for general $\a,\b\in\R$ it suffices to consider only 
$(\a,\b)\in [0,\infty)\times [0,\infty)$. However, as we saw, having the transform $\calG_{\a,\b}$ defined for $\a,\b>-1$, it is reasonable to treat 
$(-1,\infty)\times (-1,\infty)$ as the sufficient range of the type parameters $(\a,\b)$. 

Note that in the $G_{\a,\b}^\circ$ setting the analogue of Proposition \ref{pro:first} reduces merely to the observation that given $\a,\b\in\R$, the 
operators $G_{\a,\b}^\circ$, $G_{-\a,-\b}^\circ, G_{-\a,\b}^\circ$ and $G_{\a,-\b}^\circ$, just coincide.

\subsection{Auxiliary results} \label{ssec:aux}

Firstly, we prove the result which was needed in the proof of Proposition~\ref{pro:zero}. 

\begin{lemma} \label{lem:app1}
Let $\a\in\R$ and $\eta>0$. Then the operator  
$$
L_{\a,\eta}\,\vp(r)=-\vp''(r)-\frac{2\a+1}{r}\vp'(r)+\eta^2r^2\vp(r)
$$ 
with domain $\D\,L_{\a,\eta}= C^\infty_c(0,\infty)\subset L^2(d\mu_\a)$, is essentially self-adjoint if and only if $|\a|\ge1$. Equivalently, the operator 
$$
L_{\a,\eta}^\circ\,\vp(r)=-\vp''(r)+\Big(\frac{\a^2-1/4}{r^2}+\eta^2r^2\Big)\vp(r)
$$  
with $\D\,L_{\a,\eta}^\circ= C^\infty_c(0,\infty)\subset L^2(0,\infty)$,  is essentially self-adjoint if and only if $|\a|\ge1$.  
\end{lemma}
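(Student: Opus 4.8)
The plan is to reduce the claim to the already established essential self-adjointness of the unscaled Laguerre operator $L_\alpha$ by exploiting the scaling structure recorded in Section~\ref{ssec:Laguerre}. First I would dispose of the equivalence between the two displayed forms. The unitary isomorphism $U_\alpha\colon L^2(d\mu_\alpha)\to L^2(0,\infty)$ from Section~\ref{ssec:Grushin2} satisfies $B_\alpha^\circ\circ U_\alpha=U_\alpha\circ B_\alpha$ and commutes with multiplication by $\eta^2 r^2$, so it intertwines $L_{\alpha,\eta}=B_\alpha+\eta^2r^2$ with $L_{\alpha,\eta}^\circ=B_\alpha^\circ+\eta^2r^2$. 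Since $r^{\alpha+1/2}$ is smooth and nonvanishing on $(0,\infty)$, $U_\alpha$ carries $C^\infty_c(0,\infty)$ onto itself, so the two operators have matching domains and are unitarily equivalent. As essential self-adjointness is preserved under unitary equivalence, it suffices to treat $L_{\alpha,\eta}$ on $L^2(d\mu_\alpha)$.

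Next I would observe that $L_{\alpha,\eta}$ is exactly the scaled Laguerre operator $L_{\alpha,\tau}$ of Section~\ref{ssec:Laguerre} with $\tau=\eta$. The computation leading to \eqref{2.2}, which uses only the degree-$2$ homogeneity of $B_\alpha$ and the definition of the dilation $\rho_{\sqrt\eta}$, is purely algebraic and hence valid for every $f\in C^\infty_c(0,\infty)$, not merely for the eigenfunctions $\ell_n^\alpha$. This yields the intertwining
$$
L_{\alpha,\eta}\circ\rho_{\sqrt\eta}=\eta\,\rho_{\sqrt\eta}\circ L_\alpha\qquad\text{on } C^\infty_c(0,\infty),
$$
equivalently $L_{\alpha,\eta}=\eta\,\rho_{\sqrt\eta}\circ L_\alpha\circ\rho_{\sqrt\eta}^{-1}$. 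Here $\rho_{\sqrt\eta}$ is a unitary automorphism of $L^2(d\mu_\alpha)$ which, being a dilation composed with multiplication by a constant, maps $C^\infty_c(0,\infty)$ bijectively onto itself.

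Consequently $L_{\alpha,\eta}$, considered on $C^\infty_c(0,\infty)$, is unitarily equivalent to the positive multiple $\eta L_\alpha$ of $L_\alpha$ on the same domain. Essential self-adjointness is invariant under unitary equivalence and is plainly unaffected by scaling by the positive constant $\eta$; hence $L_{\alpha,\eta}$ is essentially self-adjoint if and only if $L_\alpha$ is. Invoking the criterion for $L_\alpha$ recalled in Section~\ref{ssec:Laguerre} (see \cite[Sections~27,~28]{E}, cf.\ \cite[Section~7.2]{St2}), namely that $L_\alpha$ is essentially self-adjoint precisely when $|\alpha|\ge1$, finishes the proof. The only point demanding a small verification---and the closest thing to an obstacle---is that the algebraic identity behind \eqref{2.2} genuinely extends from eigenfunctions to arbitrary test functions and that $\rho_{\sqrt\eta}$ preserves $C^\infty_c(0,\infty)$; both are routine once the homogeneity of $B_\alpha$ is invoked.
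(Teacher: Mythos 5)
Your proof is correct, but it takes a genuinely different route from the paper's. You reduce $L_{\a,\eta}$ to the unscaled Laguerre operator via the dilation identity $L_{\a,\eta}=\eta\,\rho_{\sqrt\eta}\circ L_\a\circ\rho_{\sqrt\eta}^{-1}$ on $C^\infty_c(0,\infty)$ (which, as you note, is exactly the algebraic computation behind \eqref{2.2} applied to arbitrary test functions), and then invoke the classification of essential self-adjointness for $L_\a$ recalled in Section~\ref{ssec:Laguerre} from \cite[Sections 27, 28]{E}. The paper instead works directly with the Liouville form $L^\circ_{\a,\eta}$, viewing it as a half-line Schr\"odinger operator with potential $q_{\a,\eta}(r)=\frac{\a^2-1/4}{r^2}+\eta^2r^2$, and applies the limit-point/limit-circle criteria of \cite[Propositions 15.11--12 and Theorem 15.10]{Sch}: the bound $q_{\a,\eta}\ge\frac34 r^{-2}$ near $0$ (for $|\a|\ge1$) together with semiboundedness at infinity gives essential self-adjointness, while $|q_{\a,\eta}|\le(\frac34-\ve)r^{-2}$ near $0$ (for $|\a|<1$) gives its failure. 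Your argument is shorter and exposes the structural fact that the $\eta$-dependence is spectrally trivial, but it outsources both directions of the equivalence to the cited result for $L_\a$, which the paper states without proof; the paper's argument is self-contained modulo standard Weyl-alternative theory and in effect reproves the scaled version of that cited fact from potential estimates. Both routes rest on an external reference at some point, so neither is circular, and your two auxiliary verifications (invariance of essential self-adjointness under unitary conjugation and under multiplication by a positive constant, plus $\rho_{\sqrt\eta}$ preserving $C^\infty_c(0,\infty)$) are indeed routine.
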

\begin{proof} 
The equivalence follows since the operators $L_{\a,\eta}$ and  $L_{\a,\eta}^\circ$ are unitarily equivalent. It is therefore convenient to consider 
$L_{\a,\eta}^\circ$, which is a Schr\"odinger operator on the half-line $(0,\infty)$ with potential $q_{\a,\eta}(r)=\frac{\a^2-1/4}{r^2}+\eta^2r^2$. 
The spectral theory of such operators is well developed, see, e.g. \cite[Chapter 15]{Sch}. Since $q_{\a,\eta}$ is bounded from below on $(1,\infty)$ and 
$q_{\a,\eta}(r)\ge\frac34r^{-2}$ on $(0,1)$ for $|\a|\ge1$, it follows that for $|\a|\ge1$, $L_{\a,\eta}^\circ$ is essentially self-adjoint; cf. 
\cite[Propositions~15.11-12 and Theorem~15.10]{Sch}. On the other hand, for $|\a|<1$ we have $|q_{\a,\eta}(r)|\le(\frac34-\ve)r^{-2}$ on $(0,1)$ for
some $\ve>0$, hence $L_{\a,\eta}^\circ$ fails to be essentially self-adjoint; cf. again \cite[Propositions~15.11-12 and Theorem~15.10]{Sch}. 
\end{proof}

Secondly, we discuss an alternative way of defining the $\calG^\circ$-transform and its inverse. Namely, assuming $\vp\in C^\infty_c(0,\infty)$ we 
can change in \eqref{3.1} the order of integration to obtain a counterpart to \eqref{3.3} with $\calH_\b^\circ\big[\calL^\circ_{\a,\tau}\vp_s(n)\big](\tau)$ on the left-hand side, where $\vp_s:=\vp(\cdot,s)$. 
This leads to extending this `new' definition onto $L^2\big(\R^2_+\big)$ by setting 
\begin{equation*}
\hat{\calG}_{\a,\b}^\circ f(n,\tau) =\calH_\b^\circ\big[\calL^\circ_{\a,\tau} f_s(n)\big](\tau) , \qquad f\in L^2\big(\R^2_+\big).
\end{equation*}
Comments analogous to those following \eqref{3.4a}, including \eqref{3.4a}, are in order. Accordingly, the \textit{inverse transform} 
$\check{\calG}_{\a,\b}^{\circ;-1}$ on $L^2(\Gamma^\circ)$ is defined by 
$$
\hat{\calG}_{\a,\b}^{\circ;-1}F(r,s)=\sum_{n=0}^\infty (\calH_\b^\circ F_n)(r)\ell^{\a,\circ}_{n,r}(s), \qquad r>0,\quad s>0,
$$
where we let $F_n=F(n,\cdot)$; correctness of above definition for $F\in L^2(\Gamma^\circ)$ is justified similarly as correctness of $\calG_{\a,\b}^{\circ;-1}$. 

The argument for showing that $\calG_{\a,\b}^\circ$ and $\hat{\calG}_{\a,\b}^\circ$ coincide on $L^2(\R^2_+)$, and $\calG_{\a,\b}^{\circ;-1}$ and 
$\hat{\calG}_{\a,\b}^{\circ;-1}$ coincide on $L^2(\Gamma^\circ)$, goes as follows. It is straightforward to check that the analogue of Theorem~\ref{thm:first} holds for $\hat{\calG}_{\a,\b}^\circ$ and $\hat{\calG}_{\a,\b}^{\circ;-1}$ replacing $\calG_{\a,\b}^\circ$ and $\calG_{\a,\b}^{\circ;-1}$. 
Consequently, $\hat{\calG}_{\a,\b}^\circ$ and $\hat{\calG}_{\a,\b}^{\circ;-1}$ are isometries; cf. Corollary~\ref{cor:first}. Now, since $\calG_{\a,\b}^\circ$ and $\hat{\calG}_{\a,\b}^\circ$ coincide on the dense subset $C^\infty_c(0,\infty)\subset L^2(\R^2_+)$, they also coincide on $L^2(\R^2_+)$.

\end{document}